\crefname{hypothesis}{Hypothesis}{Hypotheses}
\title{An Example Article\thanks{Submitted to the editors DATE.
\funding{This work was funded by the Fog Research Institute under contract no.~FRI-454.}}}
\author{Dianne Doe\thanks{Imagination Corp., Chicago, IL 
  (\email{ddoe@imag.com}, \url{http://www.imag.com/\string~ddoe/}).}
\and Paul T. Frank\thanks{Department of Applied Mathematics, Fictional University, Boise, ID 
  (\email{ptfrank@fictional.edu}, \email{jesmith@fictional.edu}).}
\and Jane E. Smith\footnotemark[3]}
\newcommand{\bft}{{\bf t}}
\newcommand{\bfn}{{\bf n}}
\newcommand{\vertiii}[1]{{\left\vert\kern-0.25ex\left\vert\kern-0.25ex\left\vert #1
    \right\vert\kern-0.25ex\right\vert\kern-0.25ex\right\vert}}
    \newcommand{\vertii}[1]{{\left\vert\kern-0.25ex\left\vert #1
    \right\vert\kern-0.25ex\right\vert}}
\newcommand{\verti}[1]{{\left\vert #1
    \right\vert}}
\title{Solving Parabolic Moving Interface Problems with Dynamical Immersed Spaces on Unfitted Meshes: Fully discrete Analysis}
\author{Ruchi Guo\thanks{Department of Mathematics, The Ohio State University, Columbus, OH 43210
  \email{guo.1778@osu.edu}.}
}
\DeclareSymbolFont{fouriersymbols}{FMS}{futm}{m}{n}
\DeclareSymbolFont{fourierlargesymbols}{FMX}{futm}{m}{n}
\DeclareMathDelimiter{\tbar}{\mathord}{fouriersymbols}{152}{fourierlargesymbols}{147}
\begin{document}

\maketitle

% REQUIRED
\begin{abstract}
Immersed finite element (IFE) methods are a group of long-existing numerical methods for solving interface problems on unfitted meshes. A core argument of the methods is to avoid mesh regeneration procedure when solving moving interface problems. Despite the various applications in moving interface problems, a complete theoretical study on the convergence behavior is still missing. This research is devoted to close the gap between numerical experiments and theory. We present the first fully discrete analysis including the stability and optimal error estimates for a backward Euler IFE method for solving parabolic moving interface problems. Numerical results are also presented to validate the analysis.
\end{abstract}

% REQUIRED
\begin{keywords}
Moving interface problems, parabolic equations, unfitted meshes, immersed finite element methods, backward Euler methods, fully discrete analysis
\end{keywords}

% REQUIRED
\begin{AMS}
  35R05, 65N15, 65N30
\end{AMS}

\section{Introduction}
\label{sec:introduction}
Interface problems raise from various models that involve multiple materials with different chemical or physical properties. In these models, the interface geometry itself may involve certain dynamics, i.e., the whole or portion of the interface evolve. Let $\Omega\subseteq\mathbb{R}^2$ be a fixed domain and let $\Gamma(t)$ be an evolving interface curve partitioning $\Omega$ into two subdomains $\Omega^-(t)$ and $\Omega^+(t)$ on a time interval $[0,T]$. Suppose there is certain velocity field $\mathcal{V}(X,t)$ guiding the movement of the interface curve, i.e.,
\begin{equation}
\label{velocity}
\frac{dX}{dt} = \mathcal{V}(X,t) ~~~~ X\in \Gamma(t).
\end{equation}
We further let $\beta$ be a piecewise constant function such that
\begin{equation*}
\beta(X,t)=
\left\{\begin{array}{cc}
\beta^- & \text{for} \; X\in \Omega^-(t) ,\\
\beta^+ & \text{for} \; X\in \Omega^+(t),
\end{array}\right.
\end{equation*}
which is associated with some physical or chemical properties of the materials occupying each subdomain. In this article, we consider the following parabolic interface model
\begin{subequations}\label{model}
\begin{align}
\label{inter_PDE}
\partial_t u -\nabla\cdot(\beta\nabla u)=f, \;\;\;\; & \text{in} \; \Omega = \Omega^-  \cup \Omega^+, ~~ t\in [0,T], \\
 u(\cdot,t)=0, \;\;\;\; &\text{on} \; \partial\Omega, ~~ t\in [0,T], \\
 u(\cdot,0) = u_0 ,\;\;\;\;  &\text{in} \; \Omega = \Omega^-  \cup \Omega^+.
\end{align}
The following jump conditions are imposed on the interface $\Gamma(t)$:
\begin{align}
[u]_{\Gamma(t)} &:=u^-|_{\Gamma(t)} - u^+|_{\Gamma(t)}= 0,  ~~ t\in [0,T], \label{jump_cond_1} \\
\big[\beta \nabla u\cdot \mathbf{n}\big]_{\Gamma(t)} &:=\beta^- \nabla u^-\cdot \mathbf{n}|_{\Gamma(t)} - \beta^+ \nabla u^+\cdot \mathbf{n}|_{\Gamma(t)} = 0, ~~ t\in [0,T], \label{jump_cond_2}
\end{align}
\end{subequations}
in which $\mathbf{ n}$ is the unit normal vector to $\Gamma(t)$. Here we only discuss the homogeneous jump conditions in the analysis, and the non-homogeneous jumps can be simply handled by the enriched functions as discussed by Babu\v{s}ka et al in \cite{2020AdjeridBabukaGuoLin}. The parabolic interface model in \eqref{velocity} and \eqref{model} widely appear in many applications. A well-known example is the Stefan problem \cite{1993Almgren,1997ChenMerrimanOsherSmereka} to model solidification process where $u$ represents the temperature and the velocity $\mathcal{V}$ is computed by the flux of temperature across the interface. It also appears in the Burton-Cabrera-Frank-type model for epitaxial growth of thin films \cite{2003CaflischLi} where $u$ denotes the adatom density and the velocity $\mathcal{V}$ depends on the flux of adatom density across the interface. Another example can be found in using shape optimization methodology to reconstruct inclusions governed by heat equations \cite{2013HarbrechtTausch}. In this case, the velocity is associated with the direction that shape functionals have the greatest descent rate, and computed though adjoint equations. 

It is well-known that moving interface problems may cause challenges to simulation since the modeling domain itself is evolving. If traditional finite element methods are applied, meshes have to be generated to fit the interface, and thus have to be moving or regenerated according to interface movement; otherwise the accuracy of the numerical solutions can be destroyed \cite{2000BabuskaOsborn}. The general principal is to reduce the frequency of completely remeshing procedure as much as possible, since remeshing could be troublesome, time-consuming and introduce projection or interpolation errors. There have been many moving mesh methods proposed in the literature such as the early researches \cite{1967Winslow} by Winslow based on solving elliptic-type PDEs to generate mapping for mesh generation and \cite{1992TezduyarBehrLiou} based on time-space formulation. Another typical example is the so called arbitrary Lagrangian-Eulerian (ALE) method \cite{1981HughesLiuZimmermann,2020LanRamirezSun} to solve fluid-structure-interaction (FSI) problems. In addition, we also refer readers to moving mesh methods based on Harmonic mappings \cite{2007DiLiTangZhang,2009HuLiTang} applied to diffusive interface models.

%Due to the Lagrangian setting, for large rotations or deformation of structure, some elements may become severely ill-shape which then affect the accuracy.

Alternatively, in order to completely remove the burden of mesh moving or remeshing procedure in the computation, numerical methods based on interface-independent unfitted meshes have evoked a lot of interests among many researchers in the past decades. To handle interface-cutting elements, a group of methods enforce the jump conditions in the computation scheme such as the immersed interface methods (IIM) \cite{1994LevequeLi,1997Li} based on the finite difference framework, and CutFEM \cite{2015BurmanClaus,2017HuangWuXiao} and fictitious domain methods \cite{2017WangSun} based on the finite element (FE) framework. In the context of FE methods (FEM), another group of methods attempt to use some specially designed shape functions to incorporate the jump information such as generalized FEM \cite{1983BabuskaOsborn}, multiscale FEM (MsFEM) \cite{2010ChuGrahamHou}, extended FEM (XFEM) \cite{2001DolbowMoesBelytschko} and immersed finite element (IFE) method to be discussed in this article. 

%%%%%%%%  This sentence can be added in the revision for citation
%Since the IFE method was introduced in \cite{1998Li}, it has been applied to solve various interface problem such as those in elliptic equations \cite{2019GuoLin,2015LinLinZhang}, heat equations \cite{2020LinZhuang,2013HeLinLinZhang}, acoustic wave equations \cite{2019AdjeridMoon}, Stokes equations \cite{2015AdjeridChaabaneLin,2018AdjeridChaabaneLinYue} and so on. 

%The fundamental idea of IFE methods can be traced back to the early work of Babu\v{s}ka \cite{1983BabuskaOsborn,1994BabuskaCalozOsborn} that some special finite element shape functions are constructed to capture the jump behavior, and for IFE methods these special functions are usually piecewise polynomials on interface elements. 

It is important to note that the theoretical analysis has been extensively studied for all these unfitted mesh methods on stationary interface problems, but the theoretical work on moving interface problems is rather limited in the literature. When interface evolves, an extra obstacle stems from the variation of approximation spaces and computation schemes in dynamics. For moving mesh methods, the analysis is based on the mesh-generation mapping between the fixed reference domain and the evolving physical domain, see \cite{1997HuangRussel} and particularly \cite{2017StefanThomas,2020LanRamirezSun} for interface problems. But this strategy is not suitable for unfitted mesh methods since the dynamics of approximation spaces is independent of the mesh. To address this issue, in \cite{2013LehrenfeldReusken} the authors considered a space-time discontinuous Galerkin method based on XFEM but only suboptimal convergence with respect to time can be obtained. The author in \cite{2013Zunino} studied a backward Euler XFEM but the analysis approach depends on certain strong assumptions on the interpolation operators, see (12)-(15) in that article. %In our work shows that the analysis of IFE methods for moving interface problems is rather technical.

The core idea of IFE methods is to use piecewise polynomials on interface elements to capture the jump behavior of the exact solutions. IFE methods are especially attractive for moving interface problems not only because they can be used on unfitted meshes but also the IFE spaces are isomorphic to the standard FE spaces defined on the same mesh, namely the degrees of freedom also keep unchanged in dynamics. Since the IFE method was first introduced in \cite{1998Li}, it has been applied to solve various moving interface problems. For instance, the authors in \cite{2015AdjeridChaabaneLin,2018AdjeridChaabaneLinYue} developed the IFE method for incompressible interfacial flows governed by Stokes equations and applied it to simulate drop behavior in shear and extensional flow. The authors in \cite{2019AdjeridMoon} investigated the IFE method for acoustic wave propagation problems where the simulation is conducted for an air bubble moving in water. A simulation for a moving object by IFE methods in electromagnetic field was conducted in \cite{2018BaiCaoHeLiuYang}. An IFE-based shape optimization method for geometric inverse problems was proposed in \cite{2018GuoLinLinElasto}. As far as we know, the numerical exploration for convergence behavior, without an error analysis, can be only found in \cite{2013HeLinLinZhang,2013LinLinZhang1} for parabolic interface problems. Despite these applications and numerical exploration, the theoretical analysis still remains open. 

Roughly speaking, the key difficulty in the analysis of IFE methods comes from the insufficient regularity of IFE functions including the kink across interface and discontinuities across interface edges. Namely the local IFE spaces on interface elements are only in $H^1$, and the global IFE space is not even $H^1$-conforming which all are weaker than the standard FE spaces. Thus many critical results such as trace/inverse inequalities and interpolation/projection errors can not be proved by standard techniques. For static interface problems (no time), a series of articles have built a systematic analysis framework \cite{2020AdjeridBabukaGuoLin,2019GuoLin,2003LiLinWu,2015LinLinZhang} which can establish those inequalities and estimates for IFE functions. These results are then employed in \cite{2015LinYangZhang,2020LinZhuang} to analyze the IFE methods for time-dependent problems but with the stationary interface. However there is still a gap between the analysis for stationary and unstationary interface problems due to the discontinuities of IFE spaces not only in spatial direction but also the temporal direction. 

Thanks to the isomorphism between the IFE and FE spaces, we are able to construct a uniform weak form throughout the dynamics and restrict all the variations only to the IFE spaces. This idea motivates us to reconsider the discontinuities of IFE spaces along the temporal direction from the perspective of time stepping discontinuous Galerkin method \cite{1985ErikssonJohnsonThomee}, and thus recast the time stepping IFE scheme into the framework of time-dependent adaptive methods \cite{1991ErikssonJohnsonI,1995ErikssonClaes}. The isomorphism also enables us to show that the IFE spaces share some nice properties of their FE images such as the trace inequality which is non-trivial since the IFE spaces are not $H^1$-conforming. By these preparation we present the first fully discrete optimal error estimates for a backward Euler IFE method solving the parabolic interface model \eqref{model}. 

This article consists of five additional sections. In the next section, we set up some basic notations and assumptions. In Section \ref{sec:ife_discret}, we recall the IFE spatial discretization and develop the backward Euler method. In Section \ref{sec:pre_est} we prepare some fundamental estimates. The fully discrete error estimates are presented in Section \ref{sec:error_est}. Some numerical experiments are shown in the last section to validate the analysis.

%Similar ideas are also used in \cite{2013LehrenfeldReusken,2013Zunino} for XFEM; but, thanks to the isomorphism between IFE and FE spaces, we are able to show the optimal error estimates without 

%Inspired by the works in \cite{2013LehrenfeldReusken,2013Zunino}, we believe the key idea in the analysis is to reconsider the discontinuities of IFE spaces along the temporal direction from the perspective of time stepping discontinuous Galerkin method, and thus recast the time stepping IFE scheme into the framework of time-dependent adaptive methods \cite{1995ErikssonClaes,1991ErikssonJohnsonI}. Through this consideration in this work, we present the first optimal fully discrete analysis for a backward Euler IFE method solving the parabolic interface model in \eqref{model}. Our approach One feature of the IFE methods is the isomorphism between the IFE space and the standard continuous FE space defined on the same mesh which enables us to restrict the variation only to the approximation spaces but keep the bilinear form unchanged.

%%%%%%%%%%%%%%%%%%%%%%%%%%%%%%%%%%%%%%%%%%%%%%%%%%%%%%%%%%%%
%%%%%%%%%%%%%%%%%%%%%%%%%%%%%%%%%%%%%%%%%%%%%%%%%%%%%%%%%%%%
%%%%%%%%%%%%%%%%%%%%%%%%%%%%%%%%%%%%%%%%%%%%%%%%%%%%%%%%%%%%
%%%%%%%%%%%%%%%%%%%%%%%%%%%%%%%%%%%%%%%%%%%%%%%%%%%%%%%%%%%%
%%%%%%%%%%%%%%%%%%%%%%%%%%%%%%%%%%%%%%%%%%%%%%%%%%%%%%%%%%%%

\section{Notations and Assumptions}

Throughout this article, we let $\mathcal{T}_h$ be a family of shape regular and quasi-uniform triangular partition of $\Omega$ which is independent of the evolving interface $\Gamma(t)$. For each $T\in \mathcal{T}_h$, we let $h_T$ be its diameter and define $h=\max_{T\in\mathcal{T}_h}h_T$ as the mesh size. Also we let $\mathcal{E}_h$, $\mathring{\mathcal{E}}_h$ and $\mathcal{N}_h$ be the collection of edges, interior edges and mesh nodes, respectively. We denote all the elements intersecting with $\Gamma(t)$ by $\mathcal{T}^i_h(t)$, i.e., the collection of interface elements. Similarly, we define the collection of interface edges as $\mathcal{E}^i_h(t)$. We emphasize that these two collections are all time-dependent, i.e., they depend on the interface location at $t$. In the analysis we employ a generic constant $C$ which is independent of mesh size and the interface location relative to the mesh. 

%But we note that this constant $C$ may be time-dependent since some critical results such as the trace inequalities and regularity estimates may depend on the shape of the interface and thus depend on $t$.

For each manifold $\omega\subseteq\Omega$, we define $H^k(\omega)$ as the standard Hilbert space with the norm $\|\cdot\|_{H^k(\omega)}$, and define the time-dependent Bochner space $H^l(0,T;H^k(\omega))$ with the norm $\|\cdot\|_{H^l(0,T;H^k(\omega))}$. If $|\omega\cap\Gamma|\neq0$, we let $\omega^{\pm}=\Omega^{\pm}\cap\omega$, define the split Hilbert space $H^k(\omega^-\cup\omega^+)=H^k(\omega^-\cup\omega^+,t)=\{v:v\in H^k(\omega^{\pm}(t))\}$ and further define the space involving the jump conditions:
\begin{equation}
\label{split_space}
\widetilde{H}^k(\omega,t) = \{ v\in H^k(\omega^{\pm}(t))~:~ [v]_{\Gamma(t)}=0,~ [\beta\nabla v\cdot\bfn]_{\Gamma(t)} = 0 \}
\end{equation}
where we assume $k>3/2$ such that the traces are well-defined, and there clearly holds $\widetilde{H}^k(\omega)\subseteq H^1(\omega)\cap H^k(\omega^-\cup\omega^+)$. Note that the two spaces above are all time-dependent due to $\Gamma(t)$, but we shall drop $t$ if there is no cause of confusion. Then the norms associated with $\widetilde{H}^k(\omega)$ and $H(\omega^-\cup\omega^+)$ are understood as $\|\cdot\|^2_{H^k(\omega)}=\|\cdot\|^2_{H^k(\omega^+)}+\|\cdot\|^2_{H^k(\omega^-)}$. We also denote $H^k_0(\omega)$, $H^k_0(\omega^-\cup\omega^+)$ and $\widetilde{H}^k_0(\omega)$ as the subspaces with zero trace on $\partial \omega$. Furthermore, on the mesh $\mathcal{T}_h$, we define a underling space containing all the approximation spaces considered in this article
\begin{equation}
\begin{split}
\label{split_space_2}
W_h = \{  v_h \in L^2(\Omega)~:~ &v_h|_T\in H^1(T) ~ \forall T\in\mathcal{T}_h ~ \text{and} ~ v_h ~ \text{is continuous at each} ~ X\in\mathcal{N}_h, ~ v_h|_{\partial\Omega}=0, \\
 &  \nabla v_h\cdot\bfn ~ \text{is well-defined on each} ~ e\in \mathcal{E}_h ~\text{and belong to} ~L^2(e) \}.
\end{split}
\end{equation} 
Furthermore we define $\mathbb{P}_k(\omega)$ as the polynomial space with the degree not greater than $k$ where $k$ is any non-negative integer. We also define $(\cdot,\cdot)_{L^2(\omega)}$ as the standard $L^2$ inner product on ${\omega}$.

At each $t\in [0,T]$, we assume $\Gamma(t)$ is a sufficiently smooth simple Jordan curve, namely it does not intersect itself. For simplicity, we also assume $\Gamma(t)$ does not touch the boundary. Furthermore we assume the interface only intersects an element $T$ with exactly two points locating on different edges as shown in Figure \ref{fig:interf_elem}. This assumption is widely used for many unfitted mesh methods on stationary interface problems, see \cite{2015BurmanClaus,2016GuoLin,1994LevequeLi} and the reference therein. We then connect all these intersection points to form a polyline $\Gamma_h(t)$ as the linear approximation to $\Gamma(t)$ shown in Figure \ref{fig:mesh}. An alternative way to construct $\Gamma_h(t)$ employs the level-set method \cite{2001OsherFedkiw} with piecewise linear elements. Namely, for a level-set representation $\varphi(t)$ of $\Gamma(t)$, we let $\varphi_h(t)$ be its continuous piecewise linear approximation computed by some algorithm, and then define $\Gamma_h(t)$ as the zero level-set of $\varphi_h(t)$. Here $\Gamma_h(t)$ exactly satisfies the assumption above since the $\varphi_h(t)$ is piecewise linear; but the intersection points of $\Gamma_h(t)$ are in general different from  those of $\Gamma(t)$. We emphasize that these linear approximation $\Gamma_h(t)$ have $\mathcal{O}(h^2)$ geometric accuracy to the original interface $\Gamma(t)$ which is sufficient for the linear finite element method considered in this article. As for higher order methods, a higher order geometric approximation is needed and we refer readers to \cite{2010LiMelenkWohlmuthZou} for more details. IFE methods can be also applied to solve stationary interface problems with arbitrary high order accuracy \cite{2019GuoLin}.

\begin{figure}[h]
\centering
\begin{minipage}{.42\textwidth}
  \centering
  \includegraphics[width=2.2in]{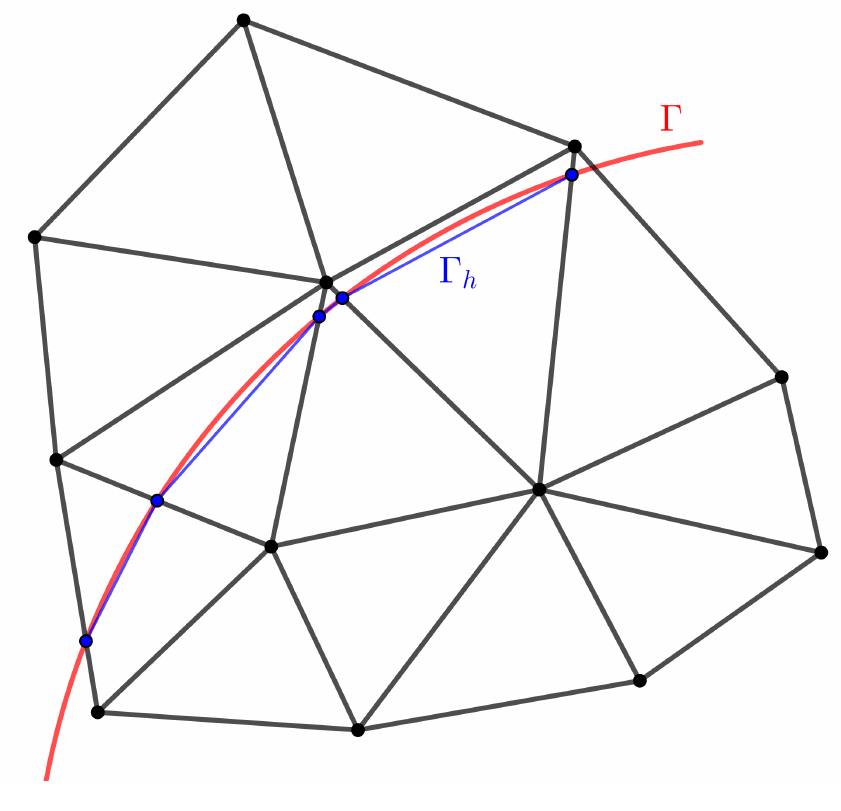}
  \caption{A unfitted mesh}
  \label{fig:mesh}
\end{minipage}
\hspace{2cm}
\begin{minipage}{.4\textwidth}
  \centering
  \ \includegraphics[width=2in]{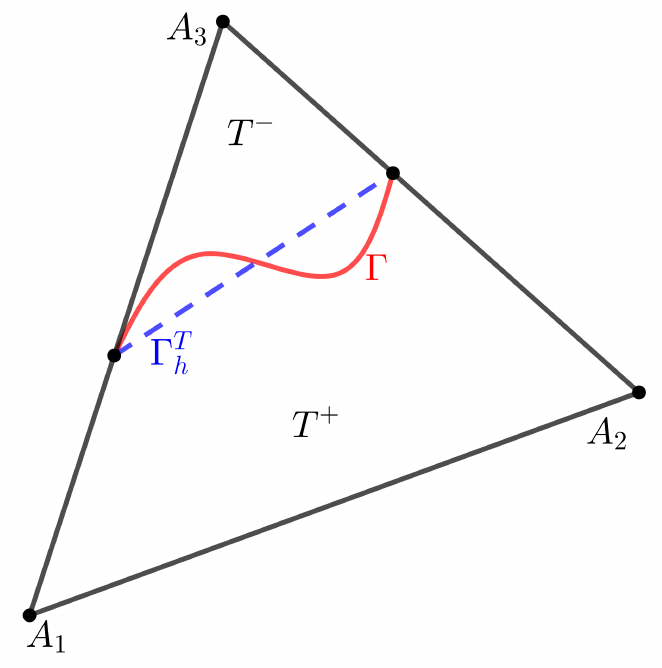}
  \caption{An interface element}
  \label{fig:interf_elem}
  \end{minipage}
\end{figure}

%To end this section, we recall a useful shape differentiation formula. We suppose $\mathcal{V}(t)$ is sufficiently smooth on $\Gamma$ for instance $\mathcal{V}\in C^0(0,T;H^{1/2}(\Gamma))$. Then for each $t\in [0,T]$, since $\mathcal{V}(t)\in H^{1/2}(\Gamma)$, by trace theorem there exists $\mathcal{V}_E\in H^1_0(\Omega)$ such that the trace of $\mathcal{V}_E$ on $\Gamma$ is $\mathcal{V}$. Then given any functional defined in terms of integral on $\Omega^{\pm}$
%\begin{equation}
%\label{functional_1}
%\mathcal{J}(t) = \int_{\Omega^{\pm}(t)} j(t,X) dX
%\end{equation}
%its temporal derivative with respect to the $\mathcal{V}_E$ direction can be calculated by
%\begin{equation}
%\label{functional_2}
%\frac{d}{dt} \mathcal{J}(t) = \int_{\Omega^{\pm}(t)} \partial_t j(t,X) dX + \int_{\partial\Omega(t)} j \mathcal{V}_E\cdot\bfn ds = \int_{\Omega^{\pm}(t)} \partial_t j(t,X) dX + \int_{\Gamma(t)} j \mathcal{V}\cdot\bfn ds,
%\end{equation}
%where $\bfn$ is the norm vector to $\Gamma$ and outward to to $\Omega^{\pm}$. The domain differentiation formula in \eqref{functional_2} is widely used in shape optimization methods \cite{J.Sokolowski_J.-P.Zolesio_1992}. We note that \eqref{functional_2} essentially only depends on $\mathcal{V}$ and is independent of its extension $\mathcal{V}_E$, but we have to assume some regularity on $\mathcal{V}$ such that its extension $\mathcal{V}_E$ is regular enough to apply the domain differentiation formula.

To end this section, we recall the Reynolds Transport Theorem \cite{1903Reynolds} in the context of fluid dynamics (a similar one referred as shape derivative formula can be found in the context of shape calculus, see (2.168) in \cite{J.Sokolowski_J.-P.Zolesio_1992}). Suppose the velocity $\mathcal{V}(t)$ is sufficiently smooth on $\Gamma$, then given any differentiable functional defined in terms of integral on $\Omega^{\pm}$
\begin{equation}
\label{functional_1}
\mathcal{J}^{\pm}(t) = \int_{\Omega^{\pm}(t)} j(t,X) dX
\end{equation}
its temporal derivative with respect to the $\mathcal{V}$ direction can be calculated by
\begin{equation}
\label{functional_2}
\frac{d}{dt} \mathcal{J}^{\pm}(t) = \int_{\Omega^{\pm}(t)} \partial_t j(t,X) dX + \int_{\Gamma(t)} j \mathcal{V}\cdot\bfn ds,
\end{equation}
where $\bfn$ is the norm vector to $\Gamma$ and outward to to $\Omega^{\pm}$.

%%%%%%%%%%%%%%%%%%%%%%%%%%%%%%%%%%%%%%%%%%%%%%%%%%%%%%%%%%%%
%%%%%%%%%%%%%%%%%%%%%%%%%%%%%%%%%%%%%%%%%%%%%%%%%%%%%%%%%%%%
%%%%%%%%%%%%%%%%%%%%%%%%%%%%%%%%%%%%%%%%%%%%%%%%%%%%%%%%%%%%
%%%%%%%%%%%%%%%%%%%%%%%%%%%%%%%%%%%%%%%%%%%%%%%%%%%%%%%%%%%%
%%%%%%%%%%%%%%%%%%%%%%%%%%%%%%%%%%%%%%%%%%%%%%%%%%%%%%%%%%%%

\section{IFE Discretization}
\label{sec:ife_discret}

In this section, we first describe a linear IFE method for the spatial approximation, and then present a backward Euler method for the temporal approximation.

\subsection{Spatial Discretization}
The core of IFE methods is the so called IFE functions to approximate the jump conditions. At each $t$, let's define $\Gamma^T_{h}(t)=\Gamma_h(t)\cap T$ for every interface element $T\in\mathcal{T}^i_h(t)$ which is simply the segment connecting the intersection points shown in Figure \ref{fig:interf_elem}, and without causing any confusion we let $\Gamma^T_{h}(t)$ divide $T$ into $T^{\pm}(t)$. Then on each interface element $T\in\mathcal{T}^i_h(t)$ with the vertices $A_j$, $j=1,2,3$, the linear IFE space consists of piecewise linear polynomials such that they satisfy the jump conditions on $\Gamma^T_{h}(t)$, namely 
\begin{equation}
\begin{split}
\label{loc_IFE_spa}
S_{h,T}(t) = & \{ v_h~:~ v^{\pm}_h=v_h|_{T^{\pm}}\in \mathbb{P}_1(T^{\pm}), ~ [v_h]_{\Gamma_{h}^T(t)}=0, ~ [\beta \nabla v_h\cdot\bar{\bfn}]_{\Gamma_{h}^T(t)}=0 \} \\
= & \text{Span}\{ \psi_{1,T}, \psi_{2,T}, \psi_{3,T} \}
\end{split}
\end{equation}
where $\bar{\bfn}$ is the normal vector to $\Gamma^T_h(t)$, and $ \psi_{i,T}$, $i=1,2,3$ are the Lagrange-type shape functions satisfying
\begin{equation}
\label{unisolv}
\psi_{i,T}(A_j) = \delta_{ij} ~~~~ i,j=1,2,3.
\end{equation}
The unisolvence of these shape functions is guaranteed regardless of interface location and $\beta^{\pm}$, and we refer interested readers to Theorem 5.3 of \cite{2016GuoLin} for more details. On all the non-interface elements, the local IFE spaces are simply linear polynomial spaces, i.e., $S_{h,T}(t)=\mathbb{P}_1(T)$. We note that these local IFE spaces vary in dynamics since the interface is evolving. We define the global IFE space
\begin{equation}
\label{glob_IFE_spa}
S_h(t) = \{ v_h\in L^2(\Omega)~:~ v_h|_T\in S_{h,T}(t) ~ \forall T\in \mathcal{T}_h ~ v_h ~ \text{is continuous at } ~ X\in \mathcal{N}_h ~ \text{and} ~ v_h|_{\partial\Omega}=0 \}.
\end{equation}
Clearly we have $S_{h,T}(t)\subseteq H^1(T)$ but $S_h(t)$ is not $H^1$ conforming i.e., $S_h(t) \not\subset H^1(\Omega)$ because IFE functions may not be continuous across interface edges. We note that the global IFE space in \eqref{glob_IFE_spa} is isomorphic to the standard continuous piecewise linear FE space denoted by $\widetilde{S}_h$. To see this, let's define the standard nodal interpolation operator
\begin{equation}
\label{iso_map}
\mathcal{I}_h(t)~:~ W_h \longrightarrow S_h(t) ~~~ \text{such that} ~~ \mathcal{I}_h(t) v_h(X) = v_h(X) ~~ \forall X\in \mathcal{N}_h.
\end{equation}
Here we note that $\mathcal{I}_h(t)$ is time-dependent purely because its range $S_h(t)$ depends on time, but the manner of the definition itself keeps unchanged. We also define the local interpolation $\mathcal{I}_{h,T}=\mathcal{I}_h|_T$. Since $\widetilde{S}_h\subseteq W_h$, we have $\mathcal{I}_h(t)$ restricted on $\widetilde{S}_h$ exactly gives the isomorphism between these two spaces. For example, we plot an IFE function in $S_h$ in Figure \ref{fig:ife_fun}\subref{ife_fun_1} and its isomorphic image in $\widetilde{S}_h$ in Figure \ref{fig:ife_fun}\subref{ife_fun_2}. Besides, comparing these two functions, we can clearly see that the IFE function can capture details much better across the interface while the FE function just losses interface information, but away from the interface they are exactly the same. Zooming in the function in Figure \ref{fig:ife_fun}\subref{ife_fun_1} we can see the slight discontinuities on some interface edges in Figure \ref{fig:ife_fun}\subref{ife_fun_1_zoom}. Moreover it has been proved that the IFE functions/spaces share many nice properties similar to the standard FE functions such as optimal approximation capabilities, trace/inverse inequalities and uniform bounds. These properties are presented in a series of articles \cite{2016GuoLin,2004LiLinLinRogers,2015LinLinZhang}. For readers' sake, we shall recall these results here since they will be used for the analysis later. 

\begin{theorem}
There exists a constant $C$ such that for every interface $\Gamma(t)$ and interface element $T$
\begin{subequations}
\label{recall_theorem}
\begin{align}
   (\textbf{approximation capability}) ~~ &\| u - \mathcal{I}_h u \|_{L^2(\Omega)} + h |u - \mathcal{I}_h u|_{H^1(\Omega)} \le Ch^2 \| u \|_{H^2(\Omega)} ~~ \forall u\in \widetilde{H}^2(\Omega), \label{thm_appro_eq0} \\
   (\textbf{inverse inequality}) ~~ &\| \nabla v_h \|_{L^2(T)} \le Ch^{-1}_T \| v_h \|_{L^2(T)} ~~ \forall v_h\in S_{h,T}(t), \label{inver_inequa} \\
    (\textbf{trace inequality}) ~~& \| \nabla v_h \|_{L^2(e)} \le Ch^{-1/2}_T \| \nabla v_h \|_{L^2(T)} ~~ \forall v_h\in S_{h,T}(t) , \label{trace_inequa} \\
    (\textbf{boundedness}) ~~  &|  \psi_{i,T} |_{W^{j,\infty}(T)} \le Ch^{-j}_T,~~~j=0,1, ~ i=1,2,3.  \label{boundedness} 
\end{align}
\end{subequations}
\end{theorem}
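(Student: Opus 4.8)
The four estimates collected in \eqref{recall_theorem} are classical facts about linear IFE spaces, established in \cite{2016GuoLin,2004LiLinLinRogers,2015LinLinZhang}; the plan here is to recall the \emph{structure} of their proofs, since it is this structure---together with the isomorphism between $\widetilde{S}_h$ and $S_h(t)$---that will be exploited in the fully discrete analysis. Because the interface enters an interface element $T$ only through the chord $\Gamma_h^T(t)$, and all non-interface elements carry the ordinary space $\mathbb{P}_1(T)$ on which the four estimates are textbook facts, it suffices to argue on an arbitrary $T\in\mathcal{T}_h^i(t)$. Two structural ingredients drive everything: (i) a lower bound for the unisolvence determinant of the system \eqref{unisolv} that is uniform in the position of $\Gamma_h^T$ in $T$ and in $\beta^{\pm}$, which is Theorem~5.3 of \cite{2016GuoLin} and is essentially equivalent to the boundedness bound \eqref{boundedness}; and (ii) the algebraic relations that the jump conditions in \eqref{loc_IFE_spa} impose on the two constant gradients $\nabla v_h^{\pm}=\nabla v_h|_{T^{\pm}}$.

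Granting (i), the inverse inequality \eqref{inver_inequa} is immediate: writing $v_h=\sum_{i=1}^{3}v_h(A_i)\psi_{i,T}$ and combining \eqref{boundedness} with the companion lower bound $\|v_h\|_{L^2(T)}\ge c\,h_T\max_i|v_h(A_i)|$, which follows from the same uniform unisolvence after rescaling $T$ to unit diameter, gives $\|\nabla v_h\|_{L^2(T)}\le C\max_i|v_h(A_i)|\le Ch_T^{-1}\|v_h\|_{L^2(T)}$. The trace inequality \eqref{trace_inequa} uses (ii): $\nabla v_h$ is piecewise constant on $T$, and since $v_h^{-}-v_h^{+}$ is affine and vanishes on the segment $\Gamma_h^T$, the tangential components of $\nabla v_h^{\pm}$ coincide, while $[\beta\nabla v_h\cdot\bar{\bfn}]_{\Gamma_h^T}=0$ forces their normal components to differ only by the factor $\beta^-/\beta^+$; hence $|\nabla v_h^{+}|\le C|\nabla v_h^{-}|$ and conversely, with $C=C(\beta^{\pm})$. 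As $|T^-|+|T^+|=|T|\ge c\,h_T^2$ by shape regularity, at least one of $|T^{\pm}|$ exceeds $c\,h_T^2/2$; together with the gradient equivalence this yields $\max(|\nabla v_h^{-}|^2,|\nabla v_h^{+}|^2)\le Ch_T^{-2}\|\nabla v_h\|_{L^2(T)}^2$, and since an edge $e\subset\partial T$ is split by the chord $\Gamma_h^T$ into at most two segments on which $\nabla v_h$ is constant, $\|\nabla v_h\|_{L^2(e)}^2\le|e|\max(|\nabla v_h^{-}|^2,|\nabla v_h^{+}|^2)\le Ch_T^{-1}\|\nabla v_h\|_{L^2(T)}^2$.

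The approximation bound \eqref{thm_appro_eq0} is the one requiring real work, and is where I expect the main obstacle. On a non-interface element it is the standard $\mathbb{P}_1$ Lagrange estimate; on an interface element $T$ the difficulty is that $u\in\widetilde{H}^2(\Omega)$ has its kink along $\Gamma$ whereas $\mathcal{I}_{h,T}u$ is broken along the chord $\Gamma_h^T$, so the two functions are piecewise smooth over mismatched partitions of $T$. The plan, following \cite{2004LiLinLinRogers,2015LinLinZhang}, is to extend $u^{\pm}$ from $T^{\pm}$ to all of $T$ as $H^2$ functions---possible because membership in $\widetilde{H}^2(T)$, i.e.\ continuity together with the flux jump condition, provides the compatibility needed for a bounded extension---then to compare $\mathcal{I}_{h,T}u$ with the ordinary $\mathbb{P}_1$ interpolant of a multi-point Taylor polynomial of $u$, so that their difference is a piecewise linear function whose nodal values are of size $\mathcal{O}(h_T^2\|u\|_{H^2(T)})$, and finally to estimate the residual supported in the sliver between $\Gamma$ and $\Gamma_h^T$ by a trace/Poincar\'e argument on a thin strip, whose width is $\mathcal{O}(h_T^2)$ by the $\mathcal{O}(h^2)$ geometric accuracy of $\Gamma_h$. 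Throughout, the uniform determinant bound underlying \eqref{boundedness} is what keeps every elementwise constant independent of how $\Gamma$ meets $T$; summing the local estimates over $\mathcal{T}_h$ then gives the global bound. The only genuinely delicate point is controlling the sliver term without sacrificing a power of $h$ and uniformly in the interface position; everything else is a rescaled finite element computation.
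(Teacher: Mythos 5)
The paper offers no proof of this theorem: it is recalled, with citations, from \cite{2016GuoLin,2004LiLinLinRogers,2015LinLinZhang}, so your decision to defer to those references and merely reconstruct the structure of their arguments is consistent with the paper's treatment. Your sketches of \eqref{boundedness}, \eqref{trace_inequa} and \eqref{thm_appro_eq0} are faithful to how the cited works proceed: uniform unisolvence for the shape-function bounds, the tangential/normal decomposition of the two constant gradients across $\Gamma^T_h$ (equal tangential parts, normal parts related by $\beta^-/\beta^+$) combined with the fact that one subelement has area $\gtrsim h_T^2$ for the trace inequality, and multi-point Taylor expansions with Sobolev extensions of $u^{\pm}$ plus control of the sliver between $\Gamma$ and $\Gamma_h$ of width $\mathcal{O}(h^2)$ for the approximation bound.

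The one step you gloss is the lower bound $\|v_h\|_{L^2(T)}\ge c\,h_T\max_i|v_h(A_i)|$ on which your proof of \eqref{inver_inequa} rests. This is not ``immediate from unisolvence after rescaling'': after rescaling to the reference element the interface position and the ratio $\beta^-/\beta^+$ remain free parameters, and uniformity of the constant in those parameters is precisely the nontrivial point (uniform unisolvence gives the upper bounds \eqref{boundedness}, i.e.\ $\|v_h\|_{L^2(T)}\le Ch_T\max_i|v_h(A_i)|$, not the reverse). Note also that the only argument of this kind in the paper, \eqref{lem_stability_eq1} in Lemma \ref{lem_stability}, obtains $|v_h(A_1)|\le Ch_T^{-1}\|v_h\|_{L^2(T)}$ by invoking \eqref{trace_inequa} and \eqref{inver_inequa} themselves, so that route would be circular in your setting. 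A safer reconstruction of \eqref{inver_inequa} avoids nodal values altogether: by your own gradient relation $|\nabla v_h^{+}|\simeq|\nabla v_h^{-}|$, the larger subelement $T^{s}$ satisfies $|T^{s}|\ge|T|/2$ and carries a single linear polynomial $v_h^{s}$, so the standard inverse inequality on $T$ together with the equivalence of norms of fixed-degree polynomials on subsets of comparable measure yields
\begin{equation*}
\|\nabla v_h\|_{L^2(T)}\le C\,|\nabla v_h^{s}|\,|T|^{1/2}\le Ch_T^{-1}\|v_h^{s}\|_{L^2(T)}\le Ch_T^{-1}\|v_h^{s}\|_{L^2(T^{s})}\le Ch_T^{-1}\|v_h\|_{L^2(T)},
\end{equation*}
with constants independent of where $\Gamma(t)$ cuts $T$; with this repair (or a direct appeal to the cited proofs), your proposal is sound.
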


\begin{figure}[h]
\centering
\begin{subfigure}{.3\textwidth}
     \includegraphics[width=2in]{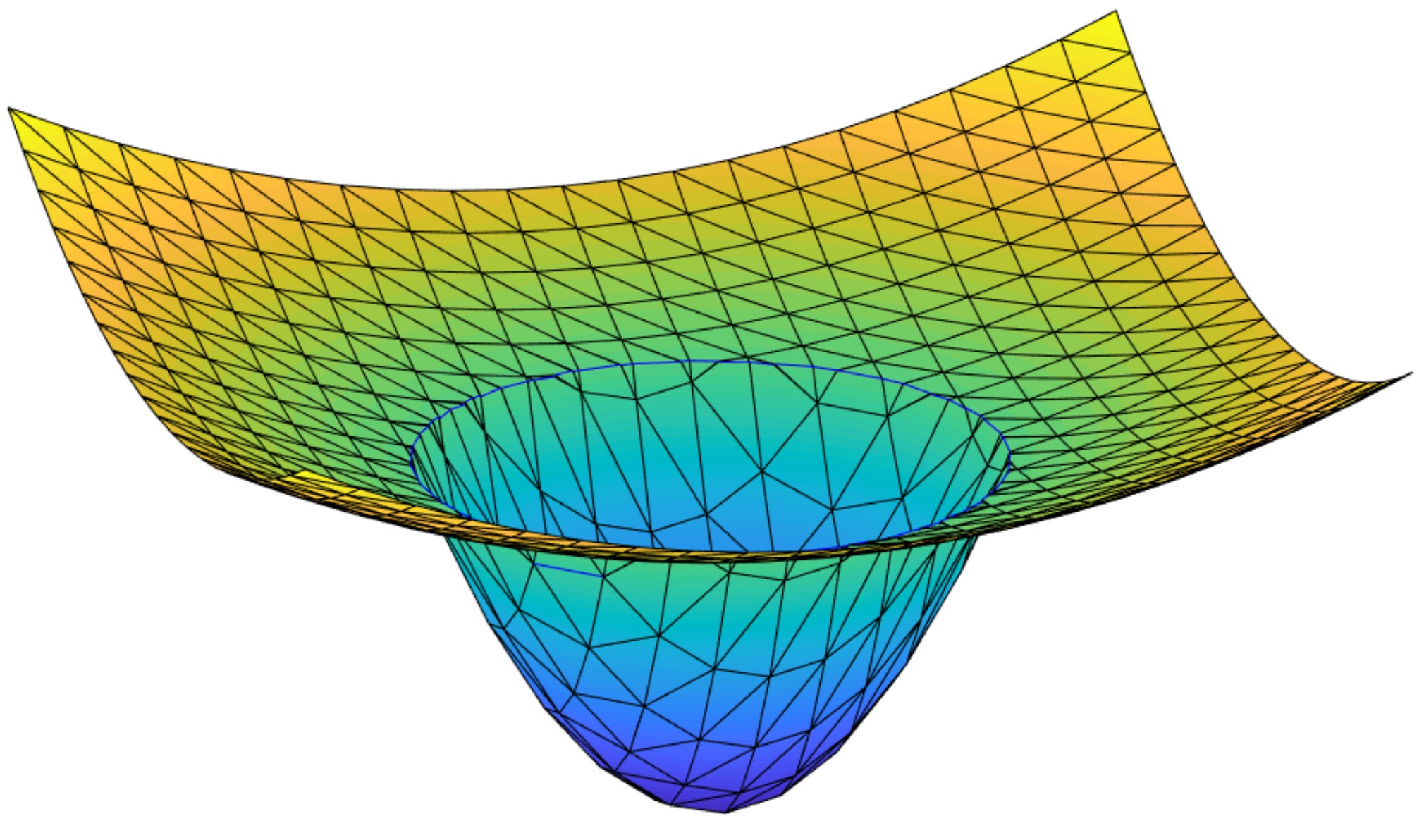}
     \caption{A global function in $S_h(t)$}
     \label{ife_fun_1} %% label for first subfigure
\end{subfigure}
~
\begin{subfigure}{.3\textwidth}
     \includegraphics[width=2in]{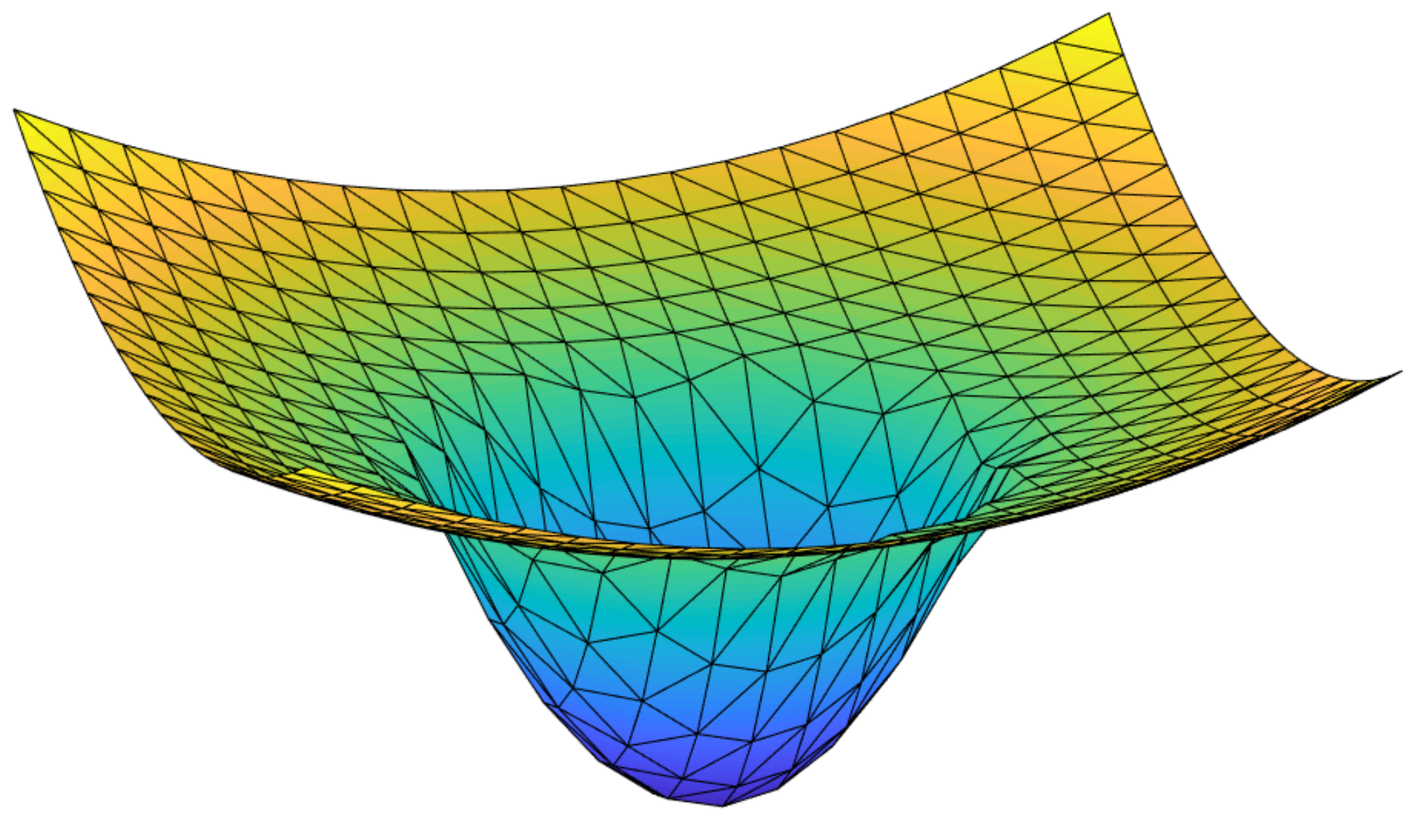}
     \caption{The isomorphic image in $\widetilde{S}_h(t)$}
     \label{ife_fun_2} %% label for first subfigure
\end{subfigure}
~
 \begin{subfigure}{.32\textwidth}
     \includegraphics[width=1.8in]{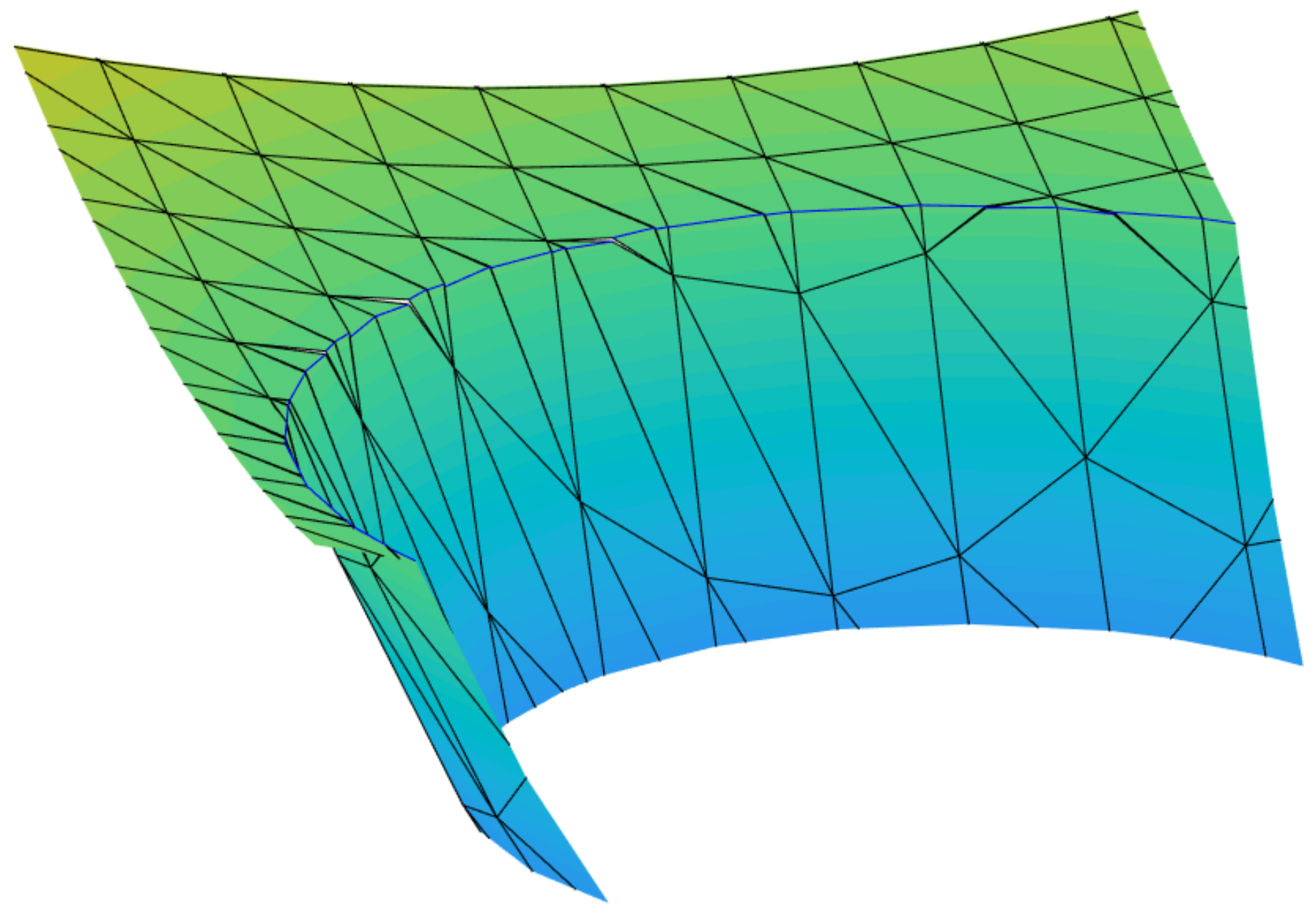}
     \caption{discontinuities on interface edges}
     \label{ife_fun_1_zoom} %% label for first subfigure
\end{subfigure}
     \caption{Plots of IFE functions}
  \label{fig:ife_fun} %% label for entire figure
\end{figure}

%and for readers' sake, we shall recall these results here since they will be used for our analysis later. 
%Let $I_h:\widetilde{H}^2(\Omega)\longrightarrow S_h$ be the nodal interpolation operator. Then the following theorem holds.
%\begin{thm}[Approximation capabilities]
%\label{label_thm_appro}
%There exists a constant $C$ such that for every $u\in \widetilde{H}^2(\Omega)$ with some interface $\Gamma(t)$, there holds
%\begin{equation}
%\label{thm_appro_eq0}
%\| u - I_h u \|_{L^2(\Omega)} + h |u - I_h u|_{H^1(\Omega)} \le Ch^2 \| u \|_{H^2(\Omega)}.
%\end{equation}
%\end{thm}
%
%\begin{thm}[Inverse/trace inequalities]
%\label{thm_trace_inver_inequa}
%There exists a constant $C$ such that for each interface element $T$ with its edge $e$, for any $v_h\in S_{h,T}(t)$ there holds
%\begin{subequations}
%\label{thm_trace_inver_inequa_eq0}
%\begin{align}
%    & \| \nabla v_h \|_{L^2(T)} \le Ch^{-1}_T \| v_h \|_{L^2(T)}, \label{inver_inequa} \\
%    &  \| \nabla v_h \|_{L^2(e)} \le Ch^{-1/2}_T \| \nabla v_h \|_{L^2(T)}, \label{trace_inequa}
%\end{align}
%\end{subequations}
%\end{thm}
%
%\begin{thm}[Boundedness]
%\label{thm_trace_inver_inequa}
%There exists a constant $C$ such that for every interface $\Gamma(t)$ and any interface element $T$ 
%\begin{align}
%   |  \psi_{i,T} |_{W^{j,\infty}(T)} \le Ch^{-j}_T, ~~~ i=1,2,3. \label{boundedness} 
%\end{align}
%\end{thm}
However since the IFE functions loss the global continuity, the simple continuous Galerkin scheme yields the suboptimal convergence \cite{2015LinLinZhang}. To address this issue, the authors in \cite{2015LinLinZhang} added interior penalties on edges to handle the discontinuities. To describe the scheme, we define a symmetric bilinear form $a_h(\cdot,\cdot): W_h\times W_h\longrightarrow \mathbb{R}$ such that
\begin{equation}
\label{bilinear_form_1}
a_h(v_h,w_h) := \int_{\Omega} \beta \nabla v_h\cdot\nabla w_h dX - \sum_{e\in\mathring{\mathcal{E}}_h} \int_e \{ \beta \nabla v_h \cdot \bfn \}_e [w_h]_e ds - \sum_{e\in\mathring{\mathcal{E}}_h} \int_e \{ \beta \nabla w_h\cdot \bfn \}_e [v_h]_e ds + \sum_{e\in\mathring{\mathcal{E}}_h} \frac{\sigma_0}{|e|} \int_e [v_h]_e [w_h]_e ds
\end{equation}
where $\sigma_0=\sigma \tau^{-1}$ is the stability parameter large enough with $\tau$ being the step size specified later, and
\begin{equation}
\label{bilinear_form_2}
\{ \beta \nabla v\cdot \bfn \}_e = \frac{1}{2}\left( \beta \nabla v|_{T_1} \cdot \bfn + \beta \nabla v|_{T_2} \cdot \bfn \right), ~~~~~~ [v]_e = v|_{T_1} - v|_{T_2}
\end{equation}
with $T_1$ and $T_2$ being the neighbor elements of $e\in \mathring{\mathcal{E}}_h$. Note that $\sigma_0=\sigma \tau^{-1}$ is so called the super-penalty also used in \cite{2013Zunino}. Then the semi-discrete IFE scheme to the parabolic interface problem \eqref{model} is to find $u_h(\cdot,t)\in S_h(t)$ at each $t$ such that
\begin{equation}
\label{semi_discrete}
(\partial_t u_h, v_h )_{L^2(\Omega)} + a_h(u_h,v_h) = (f, v_h)_{L^2(\Omega)}, ~~~ \forall v_h \in S_h(t).
\end{equation}
We note that \eqref{bilinear_form_1} shares the same format as the symmetric interior penalty discontinuous Galerkin method \cite{1982Arnold,2008Riviere}. But it is essentially not a discontinuous Galerkin method since the degrees of freedom of test and trial spaces (the IFE spaces) are as the same as the continuous piecewise linear FE spaces, i.e., the isomorphism. %The penalties are added to handle the discontinuities of IFE functions across interface edges, and the remark below actually reveals an important fact for the penalties. 
Furthermore we highlight that $a_h(\cdot,\cdot)$ only needs to operate on the IFE spaces $S_h(t)\subseteq W_h$, then all the penalties on non-interface edges vanish and only those on interface edges $\mathcal{E}^i_h(t)$ are non-zero due to the discontinuities. Namely $\forall v_h,w_h \in S_h(t)$ there holds
\begin{equation}
\begin{split}
\label{bilinear_form_3}
a_h(v_h,w_h;t) = a_h(v_h,w_h)
 = &  \int_{\Omega} \beta \nabla v_h\cdot\nabla w_h dX - \sum_{e\in\mathcal{E}^i_h(t)} \int_e \{ \beta \nabla v_h\cdot \bfn \}_e [w_h]_e ds \\
  -&  \sum_{e\in\mathcal{E}^i_h(t)} \int_e \{ \beta \nabla w_h\cdot \bfn \}_e [v_h]_e ds + \sum_{e\in\mathcal{E}^i_h(t)} \frac{\sigma}{|e|} \int_e [v_h]_e [w_h]_e ds
 \end{split}
\end{equation}
which exactly reduces to the bilinear form of the so called partially penalized IFE (PPIFE) method introduced in \cite{2018GuoLinZhuang,2015LinLinZhang} for the elliptic interface problems and \cite{2020LinZhuang} for the parabolic interface problem but with the stationary interface. Actually since the bilinear form $a_h$ is only used on the IFE spaces, essentially only \eqref{bilinear_form_3} is required in computation. But here we prefer \eqref{bilinear_form_1} in analysis since it is uniform throughout dynamics independent of interface location. It makes the proposed method distinguished from other unfitted mesh methods requiring penalties on the interface itself \cite{2015BurmanClaus,2017HuangWuXiao}. This very unique feature of IFE methods enables us to restrict the variation in the algorithm to only the approximation spaces which suggests the employment of the fundamental framework of time-dependent adaptive finite element method in \cite{1991ErikssonJohnsonI}.

%\begin{thm}
%\label{thm_interp_err}
%There exists a constant $C$ such that for every $u\in \widetilde{H}^2(\Omega)$ with some interface $\Gamma(t)$
%\begin{equation}
%\label{thm_interp_err_eq0}
%\vertiii{ u - I_h u }_a \le Ch \| u \|_{H^2(\Omega)}
%\end{equation}
%\end{thm}
%\begin{proof}
%It directly follows from \eqref{thm_coer_eq02} and the interpolation error in terms of the norm $\vertiii{\cdot}_h$ given in Theorem 4.2 in \cite{2018GuoLinZhuang}.
%\end{proof}
Based on the bilinear form $a_h(\cdot,\cdot)$, let's introduce some useful operators for analysis. At each $t$, we define an elliptic projection $\mathcal{R}_h(t)$ and a discrete Laplace operator $\mathcal{L}_h(t)$ such that
\begin{equation}
\label{ellip_proj}
\mathcal{R}_h(t)~:~ W_h(\Omega) \longrightarrow S_h(t), ~~~~ \text{with} ~ a_h(\mathcal{R}_h(t)w_h,v_h) = a_h(w_h,v_h) ~~ \forall v_h \in S_h(t),
\end{equation}
\begin{equation}
\label{discre_proj}
\mathcal{L}_h(t)~:~ W_h(\Omega) \longrightarrow S_h(t), ~~~~ \text{with} ~ (\mathcal{L}_h(t)w_h,v_h)_{L^2(\Omega)} = a_h(w_h,v_h) ~~ \forall v_h \in S_h(t),
\end{equation}
where these two operators are time-dependent since their images are time-dependent. Note that $\mathcal{R}_h(t)$ is well defined since $a_h$ is equivalent to the one in \eqref{bilinear_form_3} which is coercive on the IFE spaces (Lemma 4.1 in \cite{2015LinLinZhang}). The elliptic projection has been widely used in the semi and fully discrete analysis of numerical methods for time-dependent PDEs \cite{1991ErikssonJohnsonI,2008Riviere}. Its IFE version in \eqref{ellip_proj} has also been used for parabolic interface problems \cite{2015LinYangZhang,2020LinZhuang} with a stationary interface where the IFE spaces do not evolve so the related elliptic projection stay unchanged. Using Theorem 4.6 in \cite{2018GuoLinZhuang} for stationary interface problems, we immediately have the following estimate.

\begin{theorem}
\label{thm_ellip_proj_err}
There exists a constant $C$ such that for every $u\in \widetilde{H}^2(\Omega)$ with some interface $\Gamma(t)$
\begin{equation}
\label{thm_ellip_proj_err_eq0}
\| u - \mathcal{R}_h(t) u \|_{L^2(\Omega)} + h| u - \mathcal{R}_h(t) u |_{H^1(\Omega)} \le Ch^2 \| u \|_{H^2(\Omega)}.
\end{equation}
\end{theorem}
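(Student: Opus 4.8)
The plan is to observe that, although $\mathcal{R}_h(t)$ is defined through the ``global'' bilinear form $a_h$ of \eqref{bilinear_form_1}, its defining identity \eqref{ellip_proj} only probes $a_h$ on pairs $(w,v_h)$ whose second argument lies in $S_h(t)$ and whose first argument is either in $S_h(t)$ or is the given $u\in\widetilde{H}^2(\Omega)\subseteq H^1(\Omega)\cap H^2(\Omega^-\cup\Omega^+)$. On every such pair $a_h$ collapses to the partially penalized IFE form \eqref{bilinear_form_3} attached to the \emph{frozen} interface $\Gamma(t)$: since $[u]_e=0$ on all interior edges for an $H^1(\Omega)$ function (and, by $k>3/2$, $\nabla u\cdot\bfn$ has a well-defined $L^2(e)$ trace), the jump penalty and the $v_h$-consistency term carried by $u$ drop out, while the jumps of an IFE function $v_h\in S_h(t)$ are supported only on $\mathcal{E}^i_h(t)$. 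Hence, for each fixed $t$, $\mathcal{R}_h(t)u$ is \emph{exactly} the stationary PPIFE elliptic projection of $u$ onto $S_h(t)$ for the static interface configuration $\Gamma(t)$.

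With that identification in hand, I would simply invoke Theorem~4.6 of \cite{2018GuoLinZhuang}, which asserts precisely $\|u-\mathcal{R}_h u\|_{L^2(\Omega)}+h\,|u-\mathcal{R}_h u|_{H^1(\Omega)}\le Ch^2\|u\|_{H^2(\Omega)}$ for the stationary PPIFE elliptic projection. The only point deserving emphasis is that the constant there is independent of how $\Gamma(t)$ is positioned relative to $\mathcal{T}_h$ — this interface-location robustness is exactly what the IFE approximation theory is built to provide, via the trace/inverse inequalities \eqref{inver_inequa}--\eqref{trace_inequa}, the shape-function bounds \eqref{boundedness}, and the optimal interpolation estimate \eqref{thm_appro_eq0}. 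Applying the stationary result at every frozen $t$ therefore yields the same $C$ for all $t$, so the constant in \eqref{thm_ellip_proj_err_eq0} is independent of $t$ as well as of $h$.

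Two preparatory facts make this reduction rigorous. First, $\mathcal{R}_h(t)$ is well defined because $a_h(\cdot,\cdot;t)$ is coercive on $S_h(t)$ (Lemma~4.1 of \cite{2015LinLinZhang}), together with the identity $a_h=a_h(\cdot,\cdot;t)$ on $S_h(t)\times S_h(t)$ noted above. Second, one must verify $a_h(u,v_h)=a_h(u,v_h;t)$ for all $u\in\widetilde{H}^2(\Omega)$ and $v_h\in S_h(t)$, which is the edge-by-edge computation sketched in the first paragraph. I expect the main (and essentially only) obstacle to be bookkeeping rather than conceptual: one has to make sure the penalty weight in \eqref{bilinear_form_1} — the super-penalty $\sigma_0=\sigma\tau^{-1}$ — falls within the ``large enough'' regime covered by the hypotheses of Theorem~4.6 of \cite{2018GuoLinZhuang} and that the resulting $C$ carries no hidden dependence on $\tau$; everything else is a direct transcription of the stationary estimate to the frozen-time setting.
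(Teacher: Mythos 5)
Your proposal matches the paper's own argument: the paper proves this theorem simply by observing that at each frozen time $t$ the projection $\mathcal{R}_h(t)$ is the stationary PPIFE elliptic projection for the interface $\Gamma(t)$ and citing Theorem 4.6 of \cite{2018GuoLinZhuang}, whose constant is independent of the interface location relative to the mesh. Your additional edge-by-edge verification that $a_h$ collapses to \eqref{bilinear_form_3} on the relevant pairs, and the remark about the penalty-parameter bookkeeping, are just a more explicit write-up of the same reduction.
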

Some more delicate results about these operators and the bilinear form $a_h(\cdot,\cdot)$ will be derived in Section \ref{sec:pre_est}.

%%%%%%%%%%%%%%%%%%%%%%%%%%%%%%%%%%%%%%%%%%%%%%%%%%%%%%%%%%%%
%%%%%%%%%%%%%%%%%%%%%%%%%%%%%%%%%%%%%%%%%%%%%%%%%%%%%%%%%%%%

\subsection{Temporal Discretization}

In this subsection, we present a backward Euler time stepping IFE method for the parabolic moving interface model. As usual, we partition $[0,T]$ into $0=t_0<t_1<t_2<\cdots<t_{N}=T$ and define subintervals $J_n=(t_{n-1},t_n]$, $n=1,2,...,N$ which have equal length $\tau=| J_n |$. From now on, for simplicity at these discrete time points we shall denote the interpolation $\mathcal{I}^n_h=\mathcal{I}_h(t_n)$, the elliptic projection $\mathcal{R}^n_h=\mathcal{R}_h(t_n)$ and the discrete Laplace operator $\mathcal{L}^n_h=\mathcal{L}_h(t_n)$ as well as the IFE spaces $S^n_h=S_h(t_n)$, $n=0,1,\cdots,N$. In addition, for each sequence $v^n_h\in S^n_h$, $n=0,1,...,N$, we define the temporal finite difference operator
\begin{equation}
\label{time_fin_diff}
\delta_t v^n_h = \frac{v^n_h - v^{n-1}_h}{\tau}, ~~~ n=1,2,...N.
\end{equation}
Then the proposed backward Euler IFE method is to find a sequence $u^n_h\in S^n_h$ to approximate $u^n:=u(t_n)$, such that
\begin{equation}
\label{time_IFE_1}
(\delta_t u^n_h, v^n_h)_{L^2(\Omega)} + a_h(u^n_h, v^n_h) = (f(t_n),v^n_h)_{L^2(\Omega)}, ~~~~ \forall v^n_h \in S^n_h, ~~ n=1,\cdots,N,
\end{equation}
with $u^0_h = \mathcal{R}^0_h u_0$. Here we emphasize $a_h(\cdot,\cdot)$ can be understood as the one in \eqref{bilinear_form_3} and only the approximation spaces are changing in \eqref{time_IFE_1}. We note that \eqref{time_IFE_1} is readily used for computation, however it is not convenient for analysis in the present situation that approximation spaces are evolving at each step. To see this, let's apply the standard strategy by decomposing the error $u-u_h = u^n - u^n_h$ at $t=t_n$ into 
\begin{equation}
\label{err_decomp}
\xi_h^n = \mathcal{R}^n_hu - u^n_h \in S^n_h ~~~~~~ \text{and} ~~~~~~ \eta_h^n = u^n - \mathcal{R}^n_h u \in W_h.
\end{equation}
Then subtracting \eqref{time_IFE_1} from the counterpart for the exact solution $u$ and taking $v^n_h = \xi^n_h$, we obtain
\begin{equation}
\label{time_IFE_2}
( \delta_t \xi^n_h, \xi^n_h )_{L^2(\Omega)} + a_h( \xi^n_h, \xi^n_h)  = - (\delta_t \eta^n_h, \xi^n_h)_{L^2(\Omega)} - (\partial_t u^n - \delta_t u^n_h, \xi^n_h )_{L^2(\Omega)}.
\end{equation}
The key of the standard approach is to estimate each term in the right hand side of \eqref{time_IFE_2}. If the interface is stationary, i.e., $\mathcal{V}=0$, then $\mathcal{R}^n_h=\mathcal{R}_h$ is independent of time, and thus we obtain
%\begin{equation}
%\label{time_IFE_3}
%\delta_t \eta^n_h = \delta_t u^n - \delta_t\mathcal{R}^n_h u =  \delta_t u^n - \mathcal{R}_h \delta_t u^n = (\mathcal{I}-\mathcal{R}_h) \delta_t u^n
%\end{equation}
$
\delta_t \eta^n_h = \delta_t u^n - \delta_t\mathcal{R}^n_h u =  \delta_t u^n - \mathcal{R}_h \delta_t u^n = (\mathcal{I}-\mathcal{R}_h) \delta_t u^n
$
where $\mathcal{I}$ is the identity operator. So the estimate directly follows from the approximation result of the elliptic projection \eqref{thm_ellip_proj_err_eq0}. However if the interface evolves, $\mathcal{R}^n_h$ is not commuting with $\delta_t$ anymore, and one can only obtain suboptimal estimate for $\delta_t \eta^n_h$. This issue is also discussed in Remark 3.1 of \cite{2013LehrenfeldReusken}.

\begin{remark}
\label{rem_time_IFE_1}
It is also interesting to note that the continuous temporal differential operator $\partial_t$ is not commuting with $\mathcal{R}_h(t)$ either. Actually for $u\in \widetilde{H}^2(\Omega)$ with some interface $\Gamma(t)$ it is easy to see $\mathcal{R}_h(t)\partial_t u\in S_h(t)$ but $\partial_t \mathcal{R}_h(t)u(t)\notin S_h(t)$ since the latter one does not satisfy the homogeneous jump conditions on $\Gamma_h(t)$ anymore, see \cite{J.Sokolowski_J.-P.Zolesio_1992} for more details. %We refer interested readers to \cite{J.Sokolowski_J.-P.Zolesio_1992} for the non-homogeneous jump conditions of $\partial_t \mathcal{R}_h(t)u(t)$. 
\end{remark}

Since the key issue is the variation of approximation spaces, it is reasonable to reconsider the fully discrete scheme \eqref{time_IFE_1} from the point of the view of the discontinuous Galerkin time stepping method introduced in \cite{1985ErikssonJohnsonThomee}, and this idea is then used for the time-dependent adaptive methods in \cite{1991ErikssonJohnsonI,1995ErikssonClaes}. To adopt this framework, we introduce the spaces
\begin{equation}
\label{IFE_space_time_new}
\mathbb{W}_h = \{ V_h\in L^2(0,T;W_h)~:~ V_h|_{J_n} \in H^1(J_n;W_h), ~ n=1,2,...N \},
\end{equation}
\begin{equation}
\label{IFE_space_time}
\mathbb{S}_h = \{ V_h\in L^2(0,T;W_h)~:~ V_h|_{J_n} := V^n_h\in S^n_h, ~ n=1,2,...N \},
\end{equation}
where functions in $\mathbb{S}_h$ are constant with respect to time on each interval $J_n$. In the error analysis, we mainly focus on the spaces in \eqref{IFE_space_time_new} and \eqref{IFE_space_time}, and use the capital notations, such as $V_h$, to refer functions in these spaces. For each $V_h\in \mathbb{W}_h$, we define $V^n_h=V_h(t^-_{n})$, $n=0,1,...,N$, and in particular if $V_h\in \mathbb{S}_h$ we have $V^n_h=V_h(t^-_{n})=V_h(t^+_{n-1})$. Furthermore, we denote $[[V_h]]_{n-1} :=V_h(t^+_{n-1}) - V_h(t^-_{n-1})$ as the jump at $t_{n-1}$, and in particular if $V_h\in \mathbb{S}_h$ we have $[[V_h]]_{n-1}=V^n_h-V^{n-1}_h$. Then the scheme \eqref{time_IFE_1} can be equivalently rewritten as finding $U_h\in\mathbb{S}_h$ such that for all $n$
\begin{equation}
\label{time_IFE_4}
( [[U_h]]_{n-1}, V^n_h )_{L^2(\Omega)} + \tau a_h(U^n_h,V^n_h) = \tau ( f(t_n), V^n_h )_{L^2(\Omega)}, ~~~~ \forall V_h\in \mathbb{S}_h.
\end{equation}
Note that we indeed have $U^n_h=u^n_h$ where $u^n_h$ are from the scheme \eqref{time_IFE_1}, and in the following discussion we shall focus on $U^n_h$ to avoid confusion of notations. Then summing \eqref{time_IFE_4} from $n=1$ to $N$, we have the equivalent time stepping scheme involving time integration: find $U_h\in \mathbb{S}_h$ such that
\begin{equation}
\label{time_IFE_5}
A_h(U_h,V_h) = F_h(V_h), ~~~~ \forall V_h \in \mathbb{S}_h
\end{equation}
where the bilinear form $A_h(\cdot,\cdot):\mathbb{W}_h\times \mathbb{W}_h \rightarrow \mathbb{R}$ is defined as
\begin{equation}
\begin{split}
\label{time_IFE_6}
A_h(U_h,V_h) &= \sum_{n=1}^N \int_{J_n} (\partial_t U_h, V_h)_{L^2(\Omega)} dt + \tau \sum_{n=1}^N a_h(U_h(t^-_n),V_h(t^-_n)) \\
&  + \sum_{n=2}^N ( [[U_h]]_{n-1}, V_h(t^-_n) )_{L^2(\Omega)} + (U_h(t^+_0), V_h(t^-_1))_{L^2(\Omega)}
\end{split}
\end{equation}
where the term $(\partial_t U_h, V_h)_{L^2(\Omega)}$ is needed due to $\partial_t u$ of the original equation, and the linear functional $F_h:\mathbb{W}_h \rightarrow \mathbb{R}$ is
\begin{equation}
\label{time_IFE_7}
F_h(V_h) = \tau \sum_{n=1}^N ( f(t_n), V_h(t^-_n) )_{L^2(\Omega)}  + (U_h(t^-_0),V_h(t^-_1))_{L^2(\Omega)}
\end{equation}
where $U_h(t^-_0)=U^0_h = R^0_hu_0$ is the given initial condition. We emphasize that the bilinear form $A_h$ and the linear form $F_h$ are defined for time-dependent functions not for sequences. Although \eqref{time_IFE_5} is essentially equivalent to \eqref{time_IFE_1}, \eqref{time_IFE_5} is more suitable for analysis. 

%%%%%%%%%%%%%%%%%%%%%%%%%%%%%%%%%%%%%%%%%%%%%%%%%%%%%%%%%%%%
%%%%%%%%%%%%%%%%%%%%%%%%%%%%%%%%%%%%%%%%%%%%%%%%%%%%%%%%%%%%
%%%%%%%%%%%%%%%%%%%%%%%%%%%%%%%%%%%%%%%%%%%%%%%%%%%%%%%%%%%%
%%%%%%%%%%%%%%%%%%%%%%%%%%%%%%%%%%%%%%%%%%%%%%%%%%%%%%%%%%%%
%%%%%%%%%%%%%%%%%%%%%%%%%%%%%%%%%%%%%%%%%%%%%%%%%%%%%%%%%%%%

\section{Some Fundamental Estimates}
\label{sec:pre_est}

In this section, we prepare some fundamental estimates which will be used for stability and error analysis later. Although the IFE spaces $S_h(t)$ are not $H^1$ conforming globally, they are locally $H^1$ functions on each element. This feature enables us to show some nice properties. First of all, we show the following Poincar\'e-Friedrichs-type inequality.
\begin{lemma}
\label{lem_pf_inequa}
There exists a constant $C$ such that for each element $T$
\begin{equation}
\label{lem_pf_inequa_eq0}
\min_{\chi\in\mathbb{P}_0(T)} \| v_h - \chi \|_{L^2(T)} \le Ch_T | v_h |_{H^1(T)}, ~~~ \forall v_h \in S_{h,T}(t).
\end{equation}
\end{lemma}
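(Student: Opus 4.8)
The plan is to exploit the observation, emphasized just before the statement, that although $S_h(t)$ is not globally $H^1$-conforming, every $v_h\in S_{h,T}(t)$ does belong to $H^1(T)$: the jump condition $[v_h]_{\Gamma_h^T(t)}=0$ forces $v_h$ to be continuous across the segment $\Gamma_h^T(t)$, while on each of the two pieces $T^\pm(t)$ it is a polynomial, so that $|v_h|_{H^1(T)}^2=\|\nabla v_h\|_{L^2(T^-)}^2+\|\nabla v_h\|_{L^2(T^+)}^2$. Consequently \eqref{lem_pf_inequa_eq0} is nothing but the classical Poincar\'e--Friedrichs inequality on the triangle $T$, applied to the subspace $S_{h,T}(t)\subseteq H^1(T)$, and the only delicate point is to make sure the constant does not blow up as $\Gamma_h^T(t)$ sweeps across $T$.

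Concretely, I would first note that the minimizer over $\mathbb{P}_0(T)$ is the $L^2(T)$-orthogonal projection of $v_h$ onto the constants, namely the mean value $\overline v_h := |T|^{-1}\int_T v_h\,dX$, so it suffices to estimate $\|v_h-\overline v_h\|_{L^2(T)}$. Then I would map $T$ onto the reference triangle $\widehat T$ by the affine transformation $F_T(\widehat X)=B_T\widehat X+b_T$, $F_T(\widehat T)=T$; the pullback $\widehat v_h := v_h\circ F_T$ again lies in $H^1(\widehat T)$ --- it is piecewise linear across the affine image of $\Gamma_h^T(t)$, but for this argument only $\widehat v_h\in H^1(\widehat T)$ matters --- so the standard Poincar\'e inequality on the \emph{fixed} domain $\widehat T$ gives $\|\widehat v_h-\overline{\widehat v}_h\|_{L^2(\widehat T)}\le C_{\widehat T}\,|\widehat v_h|_{H^1(\widehat T)}$ with an absolute constant. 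Transforming back, the Jacobian weights arising from the change of variables in the $L^2$ term and in the $H^1$ seminorm cancel, and the shape-regularity bound $\|B_T\|\le C h_T$ converts $|\widehat v_h|_{H^1(\widehat T)}$ into $Ch_T|v_h|_{H^1(T)}$, which is \eqref{lem_pf_inequa_eq0} with $C$ depending only on the shape-regularity of $\mathcal{T}_h$. As an even more direct route, since $T$ is convex one may instead invoke the Payne--Weinberger inequality, which gives the sharp bound $\|v_h-\overline v_h\|_{L^2(T)}\le\pi^{-1}h_T|v_h|_{H^1(T)}$ for any $v_h\in H^1(T)$, with no dependence on $\Gamma_h^T(t)$ whatsoever.

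I do not expect any real obstacle here: the substance of the lemma is exactly that local $H^1$-conformity of the IFE space allows one to recycle a textbook inequality, and the only thing one must be careful about --- so that the constant remains robust with respect to the interface location --- is to treat $v_h$ as a single $H^1(T)$ function throughout and never split the estimate along $\Gamma_h^T(t)$.
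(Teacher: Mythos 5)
Your proposal is correct and rests on exactly the same observation as the paper's (very terse) proof: since $\mathbb{P}_0(T)\subset S_{h,T}(t)\subset H^1(T)$, the estimate is just the classical Poincar\'e--Friedrichs inequality on the single triangle $T$, with a constant independent of where $\Gamma_h^T(t)$ cuts $T$. Your scaling/Payne--Weinberger details simply flesh out what the paper leaves implicit, so no further changes are needed.
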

\begin{proof}
On non-interface elements, the result is trivial since $S_{h,T}(t)=\mathbb{P}_1(T)$. The result on interface elements also directly follows from the fact that $\mathbb{P}_0(T)\subset S_{h,T}(t)\subset H^1(T)$.
\end{proof}
Recalling that each interpolation $\mathcal{I}_h(t)$ is an isomorphism from $\widetilde{S}_h$ to $S_h(t)$, in the following several results for convenience of presentation, we focus on its inverse $\mathcal{I}^{-1}_h(t)=:\tilde{\mathcal{I}}_h(t):S_h(t)\rightarrow \widetilde{S}_h$. We then show some stability estimatse. 
\begin{lemma}
\label{lem_stability}
There exist constants $c$ and $C$ such that for each element $T$
\begin{equation}
\label{lem_stability_eq0}
c| v_h |_{H^j(T)} \le | \tilde{\mathcal{I}}_{h,T}(t) v_h |_{H^j(T)} \le C | v_h |_{H^j(T)}, ~ j=0,1, ~~~ \forall v_h\in S_{h,T}(t). 
\end{equation}
\end{lemma}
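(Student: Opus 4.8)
The plan is to reduce the estimate \eqref{lem_stability_eq0} to a finite-dimensional, scale-invariant comparison on a reference element and then invoke compactness. For $j=0$ the bound is essentially the statement that the $L^2$ norm of a local IFE function is comparable to the $L^2$ norm of its continuous piecewise-linear image sharing the same nodal values; since both live in fixed three-dimensional spaces whose bases ($\psi_{i,T}$ versus the standard hat functions) have uniformly bounded and uniformly nondegenerate Gram matrices by \eqref{unisolv} and \eqref{boundedness}, this follows by expanding $v_h=\sum_i v_h(A_i)\psi_{i,T}$ and $\tilde{\mathcal{I}}_{h,T}v_h=\sum_i v_h(A_i)\phi_{i,T}$ in these bases. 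The real content is the case $j=1$.

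First I would perform the standard affine pull-back to a reference triangle $\hat T$, noting that $|v_h|_{H^1(T)}$ and $|\hat v_h|_{H^1(\hat T)}$ are comparable with constants depending only on shape regularity, and likewise for $\tilde{\mathcal{I}}_{h,T}v_h$; the interpolation operator commutes with the affine map because it is defined purely through nodal values. This removes $h_T$ from the picture entirely. So it suffices to prove: there exist $c,C$ such that $c|\hat v_h|_{H^1(\hat T)}\le|\tilde{\mathcal I}_{\hat T}\hat v_h|_{H^1(\hat T)}\le C|\hat v_h|_{H^1(\hat T)}$ for all $\hat v_h$ in the reference local IFE space, uniformly over the location of the interface line $\Gamma^{\hat T}_h$ inside $\hat T$ and over $\beta^\pm$ in the admissible range. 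The upper bound is the easy direction: $\tilde{\mathcal I}_{\hat T}\hat v_h$ is the continuous linear polynomial with the same three nodal values as $\hat v_h$, and those nodal values are controlled by $\|\hat v_h\|_{L^\infty(\hat T)}$, which by the boundedness of the IFE shape functions \eqref{boundedness} (rescaled to $\hat T$) is controlled by $|\hat v_h(A_i)|$, hence — together with a Poincaré-type argument or a direct norm-equivalence on the finite-dimensional quotient — by $|\hat v_h|_{H^1(\hat T)}$ modulo constants; care is needed because $|\cdot|_{H^1}$ is only a seminorm, but since both sides annihilate exactly the constants $\mathbb P_0(\hat T)\subset S_{h,\hat T}$, working on the quotient by constants is legitimate and there the seminorm is a genuine norm.

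The lower bound is the main obstacle, because a priori one must worry that the interface can be arbitrarily close to a vertex, pinching $\hat T^-$ (or $\hat T^+$) down to zero area, and one needs the constant $c$ to survive that degeneration. I would handle this by a compactness/continuity argument: parametrize the configuration by the two intersection parameters (where $\Gamma^{\hat T}_h$ meets two fixed edges of $\hat T$) together with $\rho=\beta^-/\beta^+$, all ranging over a compact set (after including the limiting flat cases where the interface passes through a vertex, in which $S_{h,\hat T}$ degenerates continuously to $\mathbb P_1(\hat T)$). The map sending a configuration to the quantity $\min\{|\tilde{\mathcal I}_{\hat T}\hat v_h|_{H^1(\hat T)}/|\hat v_h|_{H^1(\hat T)}:\hat v_h\in S_{h,\hat T},\ |\hat v_h|_{H^1(\hat T)}=1\ \text{mod constants}\}$ is continuous in the configuration and the constrained set is compact, so its infimum is attained and is strictly positive provided $\tilde{\mathcal I}_{\hat T}$ is injective on $S_{h,\hat T}/\mathbb P_0$ for every configuration; injectivity is immediate since $\tilde{\mathcal I}_{\hat T}$ is the nodal isomorphism onto $\widetilde S$ by construction \eqref{iso_map}, and it remains an isomorphism in the limiting cases. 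Hence the infimum, taken over all configurations, is a positive constant $c$, which is exactly \eqref{lem_stability_eq0}. An alternative, more hands-on route that avoids compactness is to combine the inverse inequality \eqref{inver_inequa} with the $L^2$ equivalence ($j=0$ case) and the $H^1$-norm equivalence on $\widetilde S_h$: $|v_h|_{H^1(T)}\le Ch_T^{-1}\|v_h\|_{L^2(T)}\le Ch_T^{-1}\|\tilde{\mathcal I}_{h,T}v_h\|_{L^2(T)}$, but this only yields the wrong power of $h_T$ unless one first subtracts the best constant and uses Lemma \ref{lem_pf_inequa} on both spaces, so I would fall back to it only as a sanity check and present the compactness argument as the clean proof.
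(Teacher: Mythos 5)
Your proposal has genuine gaps, and they sit exactly at the points where IFE-specific information is needed. For $j=0$, the right-hand inequality requires the nodal-value bound $\max_i|v_h(A_i)|\le Ch_T^{-1}\|v_h\|_{L^2(T)}$ for $v_h\in S_{h,T}(t)$, uniformly in the interface location, i.e.\ a uniform lower bound on the Gram matrix of $\{\psi_{i,T}\}$. The facts you cite, \eqref{unisolv} and \eqref{boundedness}, are the Lagrange property and \emph{upper} bounds on the shape functions; they give the left inequality but cannot exclude cancellation among the $\psi_{i,T}$, so ``uniformly nondegenerate Gram matrices by \eqref{unisolv} and \eqref{boundedness}'' is an assertion, not a proof. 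The paper establishes precisely this missing bound (its estimate \eqref{lem_stability_eq1}) from a one-dimensional trace argument on an edge combined with the IFE trace and inverse inequalities \eqref{trace_inequa}--\eqref{inver_inequa}. The same issue recurs in your $j=1$ ``easy direction'': controlling the nodal values of $v_h$ (modulo constants) by $|v_h|_{H^1(\hat T)}$ uniformly over interface configurations is the IFE-specific content, and your appeal to ``a direct norm-equivalence on the finite-dimensional quotient'' simply restates that uniformity rather than proving it.

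The compactness route for $j=1$ could in principle be carried out, but the load-bearing claims are left unproven. First, the flux condition $[\beta\nabla v_h\cdot\bar{\bfn}]_{\Gamma_h^T}=0$ is not invariant under a general affine pull-back, so the pulled-back spaces are not the reference IFE space; the configuration set must also include the (shape-regular) element geometry or, equivalently, a transformed anisotropic coefficient, which you omit. Second, continuity of the map from configuration to subspace in $H^1$, including at the degenerate limits where the cut segment collapses to a vertex (your claim that $S_{h,\hat T}$ tends to $\mathbb{P}_1(\hat T)$), is exactly the delicate uniformity-in-interface-location issue that the quoted results \eqref{recall_theorem} were developed to encapsulate; asserting continuity and then invoking compactness defers the difficulty rather than resolving it. Finally, the ``alternative, hands-on route'' you relegate to a sanity check---subtract a constant $\chi\in\mathbb{P}_0(T)$, use the inverse inequality, the $j=0$ stability, and Lemma \ref{lem_pf_inequa}---is, with that fix, precisely the paper's proof of the $j=1$ case; once the $j=0$ nodal-value bound is in place it is short, fully rigorous, and avoids all of the continuity/compactness machinery, so it should be the main argument rather than a fallback.
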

\begin{proof}
Again the results on non-interface elements are trivial since the isomorphism reduces to the identity operator, and we only discuss the interface elements. We first show the estimate for $j=0$. On each element $T$ with the vertices $A_i$ and edges $e_i$, let $\psi_{i,T}$ and $\phi_{i,T}$, $i=1,2,3$, be the Lagrange shape functions of the IFE space $S_{h,T}(t)$ and the FE space $\widetilde{S}_{h,T}$, respectively. For each $v_h\in S_{h,T}(t)$, noting $v_h|_{\partial T}\in H^1(\partial T)$, we let $e$ be one neighbor edge of $A_1$ and obtain
\begin{equation}
\begin{split}
\label{lem_stability_eq1}
v_h(A_1) & \le Ch^{-1/2}_T \| v_h \|_{L^2(e)} + Ch^{1/2}_T \| \partial_{\bft}v_h \|_{L^2(e)} \\
& \le Ch^{-1}_T \| v_h \|_{L^2(T)} + C \| \nabla v_h \|_{L^2(T)} \le Ch^{-1}_T \| v_h \|_{L^2(T)}
\end{split}
\end{equation}
where we have used the standard trace inequality from $A_1$ to $e$ in the first inequality, the trace inequality for IFE functions given by \eqref{trace_inequa} in the second inequality and the inverse inequality for IFE functions given by \eqref{inver_inequa} in the third inequality. Then by the boundedness of $\phi_{i,T}$ and \eqref{lem_stability_eq1} we have
\begin{equation}
\label{lem_stability_eq2}
\vertii{ \tilde{\mathcal{I}}_h(t) v_h}_{L^2(T)} = \vertii{ \sum_{i=1}^3 v_h(A_i) \phi_{i,T} }_{L^2(T)} \le \| v_h \|_{L^2(T)}
\end{equation}
which gives the right inequality of \eqref{lem_stability_eq0} for $j=0$. The left inequality can be shown through a similar argument with the help of the boundedness of $\psi_{i,T}$ in \eqref{boundedness}.

As for $j=1$, we note that $\mathbb{P}_0(T)\subseteq S_{h,T}\cap \widetilde{S}_{h,T}$ and thus $\tilde{\mathcal{I}}_{h,T}(t)$ preserves constants. Then for every $\chi\in \mathbb{P}_0(T)$ and $v_h \in S_{h,T}(t)$ we obtain form the inverse inequality \eqref{inver_inequa}, the $L^2$ stability above and Lemma \ref{lem_pf_inequa} that
\begin{equation}
\begin{split}
\label{lem_stability_eq3}
|\tilde{\mathcal{I}}_{h,T}(t)v_h |_{H^1(T)} & = | \tilde{\mathcal{I}}_{h,T}(t)v_h - \chi |_{H^1(T)} \le Ch^{-1}_T \| \tilde{\mathcal{I}}_{h,T}(t)v_h - \chi \|_{L^2(T)} \\
& =  Ch^{-1}_T \| \tilde{\mathcal{I}}_{h,T}(t) (v_h - \chi) \|_{L^2(T)} \le  Ch^{-1}_T \| v_h - \chi \|_{L^2(T)} \le C | v_h |_{H^1(T)}
\end{split}
\end{equation}
which gives the right inequality of \eqref{lem_stability_eq0} for $j=1$. The left one can be proved by a similar argument.
\end{proof}

\begin{lemma}
\label{lem_glob_stability}
There exist constants $c$ and $C$ such that
\begin{equation}
\label{lem_glob_stability_eq0}
c| v_h |_{H^j(\Omega)} \le | \tilde{\mathcal{I}}_{h}(t) v_h |_{H^j(\Omega)} \le C | v_h |_{H^j(\Omega)}, ~ j=0,1, ~~~ \forall v_h\in S_{h}(t). 
\end{equation}
\end{lemma}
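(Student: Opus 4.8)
The plan is to deduce the global estimate \eqref{lem_glob_stability_eq0} from the elementwise estimate in Lemma \ref{lem_stability} by summing over all elements $T\in\mathcal{T}_h$. The key observation is that both $|\cdot|_{H^j(\Omega)}^2$ and $|\tilde{\mathcal{I}}_h(t)\cdot|_{H^j(\Omega)}^2$ decompose as sums of the corresponding local seminorms over the elements, because the global IFE space $S_h(t)$ and its image $\widetilde{S}_h$ both consist of functions that are (at worst) piecewise $H^1$ on the triangulation, and the $H^j$ seminorm for $j=0,1$ is defined without any coupling across element boundaries. Concretely, I would first write, for $v_h\in S_h(t)$,
\begin{equation*}
| \tilde{\mathcal{I}}_h(t) v_h |_{H^j(\Omega)}^2 = \sum_{T\in\mathcal{T}_h} | \tilde{\mathcal{I}}_{h,T}(t) v_h |_{H^j(T)}^2 ,
\end{equation*}
noting that $(\tilde{\mathcal{I}}_h(t)v_h)|_T = \tilde{\mathcal{I}}_{h,T}(t)(v_h|_T)$ since the nodal interpolation is local (it only uses the vertex values $v_h(A_i)$, which are well defined by the continuity of $v_h$ at mesh nodes).

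Second, I would apply Lemma \ref{lem_stability} on each element $T$ with the same constants $c,C$ (which are uniform in $T$ and in the interface location), giving $c^2 |v_h|_{H^j(T)}^2 \le |\tilde{\mathcal{I}}_{h,T}(t)v_h|_{H^j(T)}^2 \le C^2 |v_h|_{H^j(T)}^2$ for every $T$. Summing these inequalities over all $T\in\mathcal{T}_h$ and using the decomposition above on both sides yields
\begin{equation*}
c^2 | v_h |_{H^j(\Omega)}^2 \le | \tilde{\mathcal{I}}_h(t) v_h |_{H^j(\Omega)}^2 \le C^2 | v_h |_{H^j(\Omega)}^2 ,
\end{equation*}
and taking square roots gives \eqref{lem_glob_stability_eq0} (after relabeling $c^2,C^2$ as $c,C$). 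The case $j=0$ is the $L^2$ bound and the case $j=1$ is the $H^1$-seminorm bound; both follow identically from the local result.

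There is essentially no substantial obstacle here — the lemma is a routine globalization of Lemma \ref{lem_stability}. The only point requiring a word of care is the claim that the $H^j$ seminorm over $\Omega$ genuinely splits as the sum over elements: for $j=1$ this is the statement that $\nabla v_h$ is defined elementwise and its square integrates over $\Omega$ as the sum of the integrals over the triangles, which is exactly the meaning of the broken $H^1$ seminorm used throughout (and is consistent with how $|\cdot|_{H^1(\Omega)}$ is applied to IFE functions in \eqref{recall_theorem} and \eqref{thm_ellip_proj_err_eq0}). Since both $S_h(t)$ and $\widetilde{S}_h$ sit inside $W_h$, this interpretation is unambiguous, and the proof is complete after the summation. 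I would keep the written proof to two or three sentences.
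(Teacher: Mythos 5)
Your proof is correct and is exactly the argument the paper intends: the paper's own proof simply states that the global bound follows immediately from the local stability of Lemma \ref{lem_stability}, i.e., by summing the elementwise estimates using the broken (elementwise) decomposition of the $H^j$ seminorms. Your write-up just makes this summation explicit, so there is nothing to add.
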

\begin{proof}
It immediately follows from the local stability in Lemma \ref{lem_stability}.
\end{proof}

%\begin{rem}
%\label{rem_stab_j1}
%We note that the $H^1$ stability in \eqref{lem_pf_inequa} can be also understood as the consequence of the exact sequence for IFE functions which involves more details. We refer interested readers to \cite{2020GuoLinZou}.
%\end{rem}

%Since $\mathcal{I}_{h}(t)$ is isomorphic from $S_h(t)$ to $\widetilde{S}_h(t)$, without cause confusion we can denote $\mathcal{I}^{-1}_{h}$ and $\mathcal{I}^{-1}_{h,T}$ as the inverse. Thus we can prove the following estimates.
\begin{lemma}
\label{thm_interp_err}
There exists a constant $C$ such that for each element $T$
\begin{equation}
\label{thm_interp_err_eq0}
\| v_h - \tilde{\mathcal{I}}_{h,T}(t)v_h \|_{L^2(T)} \le Ch_T | v_h|_{H^1(T)} ~~~~ \forall v_h\in S_{h,T}(t).
\end{equation}
%\begin{subequations}
%\label{thm_interp_err_eq0}
%\begin{align}
%    &  \| v_h - \mathcal{I}_{h,T}(t)v_h \|_{L^2(T)} \le Ch_T | v_h|_{H^1(T)} ~~~~ \forall v_h\in S_h(t),  \label{thm_interp_err_eq01}    \\
%    &   \| \tilde{v}_h - \mathcal{I}^{-1}_{h,T}(t)\tilde{v}_h \|_{L^2(T)} \le Ch_T | \tilde{v}_h|_{H^1(T)} ~~~~ \forall \tilde{v}_h\in \widetilde{S}_h(t).  \label{thm_interp_err_eq02} 
%\end{align}
%\end{subequations}
\end{lemma}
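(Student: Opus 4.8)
The plan is to avoid any reference-element scaling argument (which is unavailable here because the IFE shape functions $\psi_{i,T}$ depend on how $\Gamma^T_h(t)$ cuts $T$) and instead reduce everything to the best constant approximation, then combine the $L^2$-stability of $\tilde{\mathcal{I}}_{h,T}(t)$ from Lemma \ref{lem_stability} with the Poincar\'e--Friedrichs estimate of Lemma \ref{lem_pf_inequa}. On non-interface elements there is nothing to prove, since there $S_{h,T}(t)=\mathbb{P}_1(T)=\widetilde{S}_{h,T}$ and $\tilde{\mathcal{I}}_{h,T}(t)$ is the identity, so the inequality holds trivially (in fact the left-hand side is $0$). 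Thus I would fix an interface element $T$ and argue there.

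The key observation is that $\mathbb{P}_0(T)\subseteq S_{h,T}(t)\cap\widetilde{S}_{h,T}$ and, since $\tilde{\mathcal{I}}_{h,T}(t)$ is defined by interpolation at the three vertices $A_j$, it fixes every constant: $\tilde{\mathcal{I}}_{h,T}(t)\chi=\chi$ for all $\chi\in\mathbb{P}_0(T)$. Using linearity of $\tilde{\mathcal{I}}_{h,T}(t)$, for any $\chi\in\mathbb{P}_0(T)$ and any $v_h\in S_{h,T}(t)$ we may write
\begin{equation*}
v_h - \tilde{\mathcal{I}}_{h,T}(t)v_h = (v_h-\chi) - \tilde{\mathcal{I}}_{h,T}(t)(v_h-\chi).
\end{equation*}
Then the triangle inequality together with the right-hand inequality of \eqref{lem_stability_eq0} for $j=0$ gives
\begin{equation*}
\| v_h - \tilde{\mathcal{I}}_{h,T}(t)v_h \|_{L^2(T)} \le \| v_h-\chi \|_{L^2(T)} + C\| v_h-\chi \|_{L^2(T)} = (1+C)\| v_h-\chi \|_{L^2(T)}.
\end{equation*}
Taking the infimum over $\chi\in\mathbb{P}_0(T)$ and invoking Lemma \ref{lem_pf_inequa} — noting $|v_h-\chi|_{H^1(T)}=|v_h|_{H^1(T)}$ — yields $\| v_h - \tilde{\mathcal{I}}_{h,T}(t)v_h \|_{L^2(T)}\le C h_T|v_h|_{H^1(T)}$, which is \eqref{thm_interp_err_eq0}.

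Honestly, there is no serious obstacle in this argument; it is a short Poincar\'e-type estimate once the right quantities are in place. The only point that requires care — and which is exactly what the preparatory lemmas supply — is that the constants in both Lemma \ref{lem_stability} and Lemma \ref{lem_pf_inequa} are uniform with respect to the interface location relative to the mesh, so that the final constant $C$ in \eqref{thm_interp_err_eq0} does not degenerate as $\Gamma(t)$ approaches a vertex or is nearly tangent to an edge. This is the one place where the IFE-specific machinery (rather than a classical Bramble--Hilbert argument on a reference triangle) is genuinely needed.
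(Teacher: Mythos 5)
Your proof is correct and follows essentially the same route as the paper: the identity $v_h-\tilde{\mathcal{I}}_{h,T}(t)v_h=(v_h-\chi)-\tilde{\mathcal{I}}_{h,T}(t)(v_h-\chi)$ via constant preservation, the $L^2$ stability of Lemma \ref{lem_stability}, and then Lemma \ref{lem_pf_inequa} after minimizing over $\chi\in\mathbb{P}_0(T)$. No substantive differences.
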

\begin{proof}
Again the results are trivial on non-interface elements. On each interface element $T$, using the properties again that $\mathbb{P}_0(T)\subseteq S_{h,T}\cap \widetilde{S}_{h,T}$ and $\tilde{\mathcal{I}}_{h,T}(t)$ preserves constants, and the $L^2$ stability in Lemma \ref{lem_stability}, we have for any constant $\chi\in \mathbb{P}_0(T)$
\begin{equation}
\begin{split}
\label{thm_interp_err_eq1}
\| v_h - \tilde{\mathcal{I}}_{h,T}(t)v_h \|_{L^2(T)} &= \| v_h - \chi + \tilde{\mathcal{I}}_{h,T}(t)\chi - \tilde{\mathcal{I}}_{h,T}(t)v_h \|_{L^2(T)} \le C \| v_h - \chi \|_{L^2(T)}.
\end{split}
\end{equation}
Then the estimate in Lemma \ref{lem_pf_inequa} yields the desired result.
%On each interface element $T$ with the vertices $A_i$, $i=1,2,3$, since $v_h$ is continuous, applying the Taylor expansion we have
%\begin{equation}
%\label{thm_interp_err_eq1}
%v_h(A_i) = v_h(X) + \nabla v_h(X)\cdot(A_i-X).
%\end{equation}
%Then by the partition of unity and boundedness of FE shape functions $\phi_{i,T}$ we obtain
%\begin{equation}
%\label{thm_interp_err_eq2}
%\vertii{ v_h -  \tilde{\mathcal{I}}_h(t)v_h }_{L^2(T)} = \vertii{ - \sum_{i=1}^3 \phi_{i,T}(X) \nabla v_h(X)\cdot(A_i-X) }_{L^2(T)} \le Ch_T \|\nabla v_h \|_{L^2(T)}
%\end{equation}
%which has finished the proof. 
\end{proof}

The stability in Lemma \ref{lem_glob_stability} and the estimate in Lemma \ref{thm_interp_err} enable us to show that the IFE functions have similar properties as FE functions such as the following global trace inequality.

\begin{theorem}
\label{thm_glob_trace}
There exists a constant $C$ such that
\begin{equation}
\label{thm_glob_trace_eq0}
\| v_h \|_{L^2(\Gamma(t))} \le C \| v_h \|_{H^1(\Omega)} ~~~ \forall v_h \in S_h(t).
\end{equation}
\end{theorem}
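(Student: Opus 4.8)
The plan is to transfer the standard global trace inequality for $H^1$-conforming finite element functions to the IFE space via the isomorphism $\tilde{\mathcal{I}}_h(t): S_h(t) \to \widetilde{S}_h$ established earlier, together with a careful bookkeeping of interface elements, where the IFE function and its FE image differ. The key observation is that $v_h$ and $\tilde v_h := \tilde{\mathcal{I}}_h(t) v_h$ agree on all non-interface elements, so the discrepancy $\| v_h \|_{L^2(\Gamma(t))}$ vs.\ $\| \tilde v_h \|_{L^2(\Gamma(t))}$ is supported only on the (uniformly bounded number of) interface elements, and on each such element the difference $v_h - \tilde v_h$ is controlled by $Ch_T |v_h|_{H^1(T)}$ thanks to Lemma~\ref{thm_interp_err}.

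\textbf{Step 1: reduce to the FE image.} Write $v_h = \tilde v_h + (v_h - \tilde v_h)$ and bound $\| v_h \|_{L^2(\Gamma(t))} \le \| \tilde v_h \|_{L^2(\Gamma(t))} + \| v_h - \tilde v_h \|_{L^2(\Gamma(t))}$. Since $v_h - \tilde v_h$ vanishes identically on every non-interface element, the second term reduces to a sum over $T \in \mathcal{T}^i_h(t)$ of $\| v_h - \tilde v_h \|_{L^2(\Gamma(t) \cap T)}$.

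\textbf{Step 2: handle the FE image $\tilde v_h$.} Since $\tilde v_h \in \widetilde{S}_h \subset H^1(\Omega)$, this is the classical situation: the global trace inequality for $H^1(\Omega)$ functions onto the smooth curve $\Gamma(t)$ (which stays away from $\partial\Omega$ by assumption and is uniformly smooth) gives $\| \tilde v_h \|_{L^2(\Gamma(t))} \le C \| \tilde v_h \|_{H^1(\Omega)}$, and then Lemma~\ref{lem_glob_stability} gives $\| \tilde v_h \|_{H^1(\Omega)} \le C \| v_h \|_{H^1(\Omega)}$ (using also the $j=0$ stability for the $L^2$ part of the norm). One must be slightly careful that the constant in the $H^1$--to--$\Gamma(t)$ trace inequality is uniform in $t$; this follows from the assumed uniform smoothness of $\Gamma(t)$ and a standard compactness/partition-of-unity argument, or one can route it through a uniform trace constant from each interface element $T$ to $\Gamma(t)\cap T$.

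\textbf{Step 3: handle the correction term on interface elements.} On each $T \in \mathcal{T}^i_h(t)$, apply a local trace inequality from $T$ onto the curve piece $\Gamma(t) \cap T$. The function $v_h - \tilde v_h$ restricted to $T$ lies in $H^1(T^-(t)) \cup H^1(T^+(t))$ relative to the \emph{true} interface (or one may work with $\Gamma_h^T(t)$ and absorb the $\mathcal{O}(h^2)$ discrepancy), so a local trace inequality of the form $\| w \|_{L^2(\Gamma(t)\cap T)}^2 \le C(h_T^{-1} \| w \|_{L^2(T)}^2 + h_T \| \nabla w \|_{L^2(T^-\cup T^+)}^2)$ holds with a constant independent of interface position (this is the kind of uniform local trace estimate underlying \eqref{trace_inequa}; for $v_h$ and $\tilde v_h$ separately the gradient is piecewise polynomial so the inverse inequality \eqref{inver_inequa} further controls it). Thus $\| v_h - \tilde v_h \|_{L^2(\Gamma(t)\cap T)} \le C h_T^{-1/2} \| v_h - \tilde v_h \|_{L^2(T)} + C h_T^{1/2}|v_h - \tilde v_h|_{H^1(T)}$, and then Lemma~\ref{thm_interp_err} bounds $\| v_h - \tilde v_h \|_{L^2(T)}$ by $C h_T |v_h|_{H^1(T)}$ while Lemma~\ref{lem_stability} bounds $|v_h - \tilde v_h|_{H^1(T)}$ by $C|v_h|_{H^1(T)}$; both give $\| v_h - \tilde v_h \|_{L^2(\Gamma(t)\cap T)} \le C h_T^{1/2} |v_h|_{H^1(T)} \le C \|\nabla v_h\|_{L^2(T)}$. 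Summing over the finitely many interface elements (square, sum, square-root) yields $\| v_h - \tilde v_h \|_{L^2(\Gamma(t))} \le C |v_h|_{H^1(\Omega)}$, and combining with Step~2 completes the proof.

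\textbf{The main obstacle} I expect is establishing the \emph{uniformity in $t$} (equivalently, uniformity with respect to the interface location relative to the mesh) of the trace constants in Steps~2 and~3 — the classical trace inequalities hold, but one must argue that the constants do not degenerate as $\Gamma(t)$ sweeps across mesh elements or nearly aligns with an edge; this is exactly the subtlety the IFE literature addresses for \eqref{trace_inequa}, and I would invoke or mimic that argument (scaling to a reference configuration plus the assumed geometric regularity of $\Gamma(t)$) rather than reprove it from scratch.
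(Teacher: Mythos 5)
Your proposal is correct and follows essentially the same route as the paper: the same splitting $v_h = \tilde{\mathcal{I}}_h v_h + (v_h - \tilde{\mathcal{I}}_h v_h)$, the standard trace inequality plus Lemma \ref{lem_glob_stability} for the conforming image, and the local trace inequality on interface elements combined with Lemma \ref{thm_interp_err} and the $H^1$ stability to get $Ch_T^{1/2}|v_h|_{H^1(T)}$ per element, summed in squares over $\mathcal{T}^i_h(t)$. (Your parenthetical remark that the interface elements are "uniformly bounded in number" is inaccurate—their count grows like $h^{-1}$—but it is harmless since your actual Step 3 sums squares exactly as the paper does.)
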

\begin{proof}
Note that \eqref{thm_glob_trace_eq0} is true for FE functions due to the global $H^1$-conformity. Given each $v_h\in S_h(t)$, by the triangular inequality we have
\begin{equation}
\label{thm_glob_trace_eq1}
\| v_h \|_{L^2(\Gamma(t))} \le \| v_h - \tilde{\mathcal{I}}_hv_h \|_{L^2(\Gamma(t))} + \| \tilde{\mathcal{I}}_hv_h \|_{L^2(\Gamma(t))}.
\end{equation}
Since $\tilde{\mathcal{I}}_hv_h \in \widetilde{S}_h(t)$, by the trace inequality and Lemma \ref{lem_glob_stability} we have
\begin{equation}
\label{thm_glob_trace_eq2}
\| \tilde{\mathcal{I}}_hv_h \|_{L^2(\Gamma(t))} \le C\| \tilde{\mathcal{I}}_h v_h \|_{H^1(\Omega)} \le C\| v_h \|_{H^1(\Omega)}. 
\end{equation}
It remains to estimate the first term in the right hand side of \eqref{thm_glob_trace_eq1}. On each interface element $T$, by the trace inequality Lemma 3.1 in \cite{2016WangXiaoXu}, Lemma \ref{thm_interp_err} and Lemma \ref{lem_glob_stability} with $j=1$ we have
\begin{equation}
\begin{split}
\label{thm_glob_trace_eq3}
\| v_h - \tilde{\mathcal{I}}_hv_h \|_{L^2(\Gamma(t)\cap T)} & \le Ch_T^{-1/2} \| v_h - \tilde{\mathcal{I}}_hv_h \|_{L^2(T)} + Ch_T^{1/2} | v_h - \tilde{\mathcal{I}}_hv_h |_{H^1(T)}  \le C h^{1/2}_T | v_h |_{H^1(T)}.
\end{split}
\end{equation}
Therefore, by the assumption $h_T\le C$ there holds
\begin{equation}
\label{thm_glob_trace_eq4}
\| v_h - \tilde{\mathcal{I}}_hv_h \|^2_{L^2(\Gamma(t))} = \sum_{T\in\mathcal{E}^i_h(t)} \| v_h - \tilde{\mathcal{I}}_hv_h \|^2_{L^2(\Gamma(t)\cap T)} \le C \sum_{T\in\mathcal{E}^i_h(t)}  |v_h|^2_{H^1(T)} \le C | v_h |^2_{H^1(\Omega)}.
\end{equation}
Putting \eqref{thm_glob_trace_eq2} and \eqref{thm_glob_trace_eq4} into \eqref{thm_glob_trace_eq1} yields the desired result.
\end{proof}
We emphasize that the results above are basically some delicate estimates on the difference between the IFE functions and their FE isomorphic images. To our best knowledge, these results have not appeared in the literature.

Next we proceed to discuss the coercivity of the bilinear form $a_h(\cdot,\cdot)$ in \eqref{bilinear_form_1}. Due to the equivalence in \eqref{bilinear_form_3}, the coercivity is already given in Lemma 4.1 in \cite{2015LinLinZhang} and Theorem 4.3 in \cite{2018GuoLinZhuang}. But in order to handle the dynamical IFE spaces, we need more delicate results. For this purpose, we first introduce the uniform energy norm on the general broken space $W_h$:
\begin{equation}
\begin{split}
\label{norm_1}
\vertiii{v_h}^2_{h} := \| \sqrt{\beta}\nabla v_h \|^2_{L^2(\Omega)} + \sigma h^{-1} \tau^{-1} \sum_{e\in\mathring{\mathcal{E}}_h} \| [v]_e \|^2_{L^2(e)} + h\tau\sigma^{-1} \sum_{e\in\mathring{\mathcal{E}}_h} \| \{ \beta \nabla v\cdot\bfn \}_e \|^2_{L^2(e)} .
\end{split}
\end{equation}
It is easy to see this is a norm on the broken space $W_h$, and we note that this energy norm is widely used in the interior penalty discontinuous Galerkin methods \cite{1982Arnold}. Here we use it for the IFE spaces and note that all the terms $\| [v] \|^2_{L^2(e)}$ on non-interface edges just vanish. A similar energy norm is also used in \cite{2018GuoLinZhuang} with only the penalty terms on interface edges. Here we add the penalty terms on all the edges such that the norm format also keeps unchanged in the dynamics. This feature is important for the following error analysis. Using \eqref{thm_appro_eq0} and the argument of Theorem 4.5 in \cite{2018GuoLinZhuang}, we can show the following estimate.
\begin{theorem}
\label{thm_appro_energy}
Suppose $u\in \widetilde{H}^2(\Omega)$ with some interface $\Gamma(t)$, there exists a constant $C$ such that
\begin{equation}
\label{thm_appro_energy_eq0}
\vertiii{ u - \mathcal{I}_h(t)u }_h \le Ch \vertii{ u }_{H^2(\Omega)}.
\end{equation}
\end{theorem}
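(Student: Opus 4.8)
The plan is to split $\vertiii{u - \mathcal{I}_h(t)u}_h^2$ according to the three groups of terms in \eqref{norm_1} --- the broken gradient term, the jump‑penalty sum $\sigma h^{-1}\tau^{-1}\sum_e\|[u-\mathcal{I}_h u]_e\|_{L^2(e)}^2$, and the flux‑penalty sum $h\tau\sigma^{-1}\sum_e\|\{\beta\nabla(u-\mathcal{I}_h u)\cdot\bfn\}_e\|_{L^2(e)}^2$ --- and to bound each along the lines of Theorem~4.5 in \cite{2018GuoLinZhuang}, the only genuinely new bookkeeping being that the penalties in \eqref{norm_1} live on \emph{every} interior edge, not just the interface ones. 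The gradient term is disposed of at once: $\|\sqrt{\beta}\nabla(u-\mathcal{I}_h u)\|_{L^2(\Omega)}^2 \le C\,|u-\mathcal{I}_h u|_{H^1(\Omega)}^2 \le Ch^2\|u\|_{H^2(\Omega)}^2$ by the approximation estimate \eqref{thm_appro_eq0}. Each of the two edge‑sums is then broken into a sum over non‑interface edges and a sum over interface edges $\mathcal{E}^i_h(t)$.

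On the non‑interface edges the jump‑penalty contributes nothing: $u\in\widetilde{H}^2(\Omega)\subseteq H^1(\Omega)$ gives $[u]_e=0$ on every interior edge, and on a non‑interface edge $\mathcal{I}_h u$ reduces (from both sides) to the continuous linear Lagrange interpolant, so $[u-\mathcal{I}_h u]_e=0$ there. For the flux average on such an edge, on each neighbouring element $T$ the interpolant has constant gradient, hence $\nabla(u-\mathcal{I}_h u)\in H^1(T)$ (up to the $\mathcal{O}(h^2)$‑thin strip where $\Gamma$ and $\Gamma_h$ disagree, absorbed into lower‑order terms exactly as in \cite{2018GuoLinZhuang}); a standard trace inequality gives $\|\{\beta\nabla(u-\mathcal{I}_h u)\cdot\bfn\}_e\|_{L^2(e)}^2 \le C(h_T^{-1}|u-\mathcal{I}_h u|_{H^1(T)}^2 + h_T|u|_{H^2(T)}^2)$, and summing over non‑interface edges and using \eqref{thm_appro_eq0} bounds the whole non‑interface flux sum by $Ch\|u\|_{H^2(\Omega)}^2$; multiplying by $h\tau\sigma^{-1}$ yields a term of order $h^2\|u\|_{H^2(\Omega)}^2$.

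The interface edges are the crux. For the jump penalty, $[u]_e=0$ still holds, so $[u-\mathcal{I}_h u]_e=-[\mathcal{I}_h u]_e$; the key point is that $u-\mathcal{I}_h u\in H^1(T)$ on the \emph{whole} triangle $T$, so an ordinary $H^1(T)\to e$ trace inequality applies (no sliver degeneracy, since it uses all of $T$), giving $\|[\mathcal{I}_h u]_e\|_{L^2(e)}^2\le C(h_T^{-1}\|u-\mathcal{I}_h u\|_{L^2(T)}^2+h_T|u-\mathcal{I}_h u|_{H^1(T)}^2)$; summing over $\mathcal{E}^i_h(t)$ and invoking \eqref{thm_appro_eq0} yields $\sum_{e\in\mathcal{E}^i_h(t)}\|[\mathcal{I}_h u]_e\|_{L^2(e)}^2\le Ch^3\|u\|_{H^2(\Omega)}^2$, and multiplying by $\sigma h^{-1}\tau^{-1}$ gives a term of order $h^2\|u\|_{H^2(\Omega)}^2$. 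The flux penalty on interface edges cannot be treated this way, because on an interface element $\nabla(u-\mathcal{I}_h u)$ is only piecewise $H^1$ (its gradient jumps across $\Gamma^T_h$, and $\beta$ jumps there too), so a naive trace from one cut piece $T^\pm$ would carry a constant that degenerates as the cut approaches a vertex. This is where the IFE‑specific machinery enters: using the IFE trace inequality \eqref{trace_inequa} together with the fact that both $u$ and $\mathcal{I}_h u$ satisfy the flux‑jump condition across the (approximate) interface and with the unisolvence of the shape functions, one obtains, as in the proof of Theorem~4.5 of \cite{2018GuoLinZhuang}, the interface‑uniform bound $\|\{\beta\nabla(u-\mathcal{I}_h u)\cdot\bfn\}_e\|_{L^2(e)}^2\le Ch_T\|u\|_{H^2(T^-\cup T^+)}^2$; summing over $\mathcal{E}^i_h(t)$ and multiplying by $h\tau\sigma^{-1}$ again produces a term of order $h^2\|u\|_{H^2(\Omega)}^2$.

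Adding the four contributions gives $\vertiii{u-\mathcal{I}_h(t)u}_h^2\le Ch^2\|u\|_{H^2(\Omega)}^2$, i.e.\ \eqref{thm_appro_energy_eq0}, with $C$ independent of $h$ and of the location of $\Gamma(t)$ relative to the mesh (it may depend on $\beta^\pm$ and on the penalty parameter $\sigma_0=\sigma\tau^{-1}$). I expect the only real difficulty to be the interface flux‑penalty estimate described above --- the sliver‑cut degeneracy is precisely what forces one to use the sharp IFE trace/approximation results rather than elementary scaling arguments --- while the gradient term, the non‑interface edges, and the interface jump penalty are all routine consequences of \eqref{thm_appro_eq0} and standard trace inequalities; the genuinely new ingredient compared with the stationary analysis is simply the verification that the estimate still holds for the uniform norm \eqref{norm_1}, whose penalties are spread over all interior edges.
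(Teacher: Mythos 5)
Your overall route coincides with the paper's intended one: the paper offers no detailed proof of this theorem, only the instruction to combine \eqref{thm_appro_eq0} with the argument of Theorem 4.5 in \cite{2018GuoLinZhuang}, and your handling of the gradient term, of the vanishing jumps on uncut edges, and of the flux averages (delegating the delicate interface-edge flux bound to the Guo--Lin--Zhuang machinery) is exactly that argument adapted to the all-interior-edge norm \eqref{norm_1}. The genuine gap is in the jump-penalty term, and it is not mere bookkeeping. Your bound $\sum_{e\in\mathcal{E}^i_h(t)}\|[u-\mathcal{I}_h u]_e\|^2_{L^2(e)}\le Ch^3\|u\|^2_{H^2(\Omega)}$ is correct (and sharp for $\widetilde H^2$ regularity), but multiplying it by the super-penalty weight $\sigma h^{-1}\tau^{-1}$ in \eqref{norm_1} gives $C\sigma\tau^{-1}h^2\|u\|^2_{H^2(\Omega)}$, not a term ``of order $h^2\|u\|^2_{H^2(\Omega)}$''; your closing parenthesis that $C$ may depend on $\sigma_0=\sigma\tau^{-1}$ concedes precisely this, but then what you have actually proved is $\vertiii{u-\mathcal{I}_h(t)u}_h\le C\bigl(1+\sqrt{\sigma/\tau}\bigr)h\|u\|_{H^2(\Omega)}$, which is strictly weaker than \eqref{thm_appro_energy_eq0} as the paper uses it.

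The $\tau$-uniformity of the constant is essential downstream: \eqref{thm_appro_energy_eq0} enters the duality step \eqref{thm_Rn_err_eq6_4} in the proof of Theorem \ref{thm_Rn_h_err}, and from there the stability estimate \eqref{thm_stab_eq6} under the condition $h^2\le\gamma\tau$ and the final bound of Theorem \ref{thm_label_err_L2}, all of which track powers of $\tau$ explicitly; a hidden factor $\sqrt{\sigma/\tau}$ would alter those conclusions. Note also that no purely approximation-theoretic refinement can close this on its own, since $\mathcal{I}_h(t)$ does not depend on $\tau$: one needs either a genuinely sharper (superconvergent) estimate for the interface-edge jumps of the IFE interpolant, an explicit relation between $h$, $\tau$ and $\sigma$, or an explicit $\sqrt{\sigma/\tau}$ factor in the statement. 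You should flag this tension rather than assert the $h^2$ order; indeed the cited Theorem 4.5 of \cite{2018GuoLinZhuang} concerns an energy norm whose jump weight is $h^{-1}$ without the factor $\tau^{-1}$, so the paper's one-line proof is silent on exactly the point where your argument (honestly) stalls.
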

We also have the following coercivity in terms of the energy norm $\vertiii{\cdot}_h$.
\begin{theorem}
\label{thm_coer}
Suppose $\sigma$ is sufficiently larger, then there exists constants $\kappa_1$ and $\kappa_2$ such that
\begin{subequations}
\label{thm_coer_eq0}
\begin{align}
    & a_h(v_h,v_h) \ge \kappa_1 \vertiii{v_h}^2_h, ~~~~~~~~~~~ \forall v_h \in S_h(t), \label{thm_coer_eq01} \\
    & a_h(v_h,w_h) \le \kappa_2 \vertiii{v_h}_h \vertiii{w_h}_h, ~~~ \forall v_h,w_h \in W_h. \label{thm_coer_eq02}
\end{align}
\end{subequations}
\end{theorem}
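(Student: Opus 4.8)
The plan is to establish the two inequalities \eqref{thm_coer_eq01} and \eqref{thm_coer_eq02} separately, exploiting the fact that the three pieces of the energy norm \eqref{norm_1}---the weighted gradient $\|\sqrt{\beta}\nabla v_h\|_{L^2(\Omega)}$, the jump part with weight $\sigma h^{-1}\tau^{-1}$, and the flux-average part with weight $h\tau\sigma^{-1}$---are precisely matched to the volume, penalty and consistency terms of $a_h(\cdot,\cdot)$.

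For the boundedness \eqref{thm_coer_eq02}, which must hold on all of $W_h$, I would not invoke any IFE-specific inequality. Given $v_h,w_h\in W_h$, the volume term of \eqref{bilinear_form_1} is bounded by $\|\sqrt{\beta}\nabla v_h\|_{L^2(\Omega)}\|\sqrt{\beta}\nabla w_h\|_{L^2(\Omega)}$, hence by $\vertiii{v_h}_h\vertiii{w_h}_h$. For each consistency term I would write the unit edge weight as $(h\tau\sigma^{-1})^{1/2}(h^{-1}\tau^{-1}\sigma)^{1/2}$, apply the Cauchy--Schwarz inequality first on $e$ and then over $e\in\mathring{\mathcal{E}}_h$, and recognise the two resulting factors as the flux-average part of $\vertiii{\cdot}_h$ of one argument and the jump part of the other. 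The penalty term is treated identically, distributing $\sigma_0|e|^{-1}=\sigma\tau^{-1}|e|^{-1}$ and using $c\,h\le|e|\le h$ from quasi-uniformity. Adding the four bounds yields $a_h(v_h,w_h)\le\kappa_2\vertiii{v_h}_h\vertiii{w_h}_h$ with $\kappa_2$ an absolute constant.

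For the coercivity \eqref{thm_coer_eq01} I would take $v_h\in S_h(t)$ and set $w_h=v_h$ in \eqref{bilinear_form_1}, so that $a_h(v_h,v_h)=\|\sqrt{\beta}\nabla v_h\|^2_{L^2(\Omega)}-2\sum_{e\in\mathring{\mathcal{E}}_h}\int_e\{\beta\nabla v_h\cdot\bfn\}_e[v_h]_e\,ds+\sigma\tau^{-1}\sum_{e\in\mathring{\mathcal{E}}_h}|e|^{-1}\|[v_h]_e\|^2_{L^2(e)}$. The cross term is the heart of the matter: bounding $\|\{\beta\nabla v_h\cdot\bfn\}_e\|_{L^2(e)}$ by the IFE trace inequality \eqref{trace_inequa} on each of the two elements adjacent to $e$ (and the standard polynomial trace inequality when such an element is non-interface) gives $\|\{\beta\nabla v_h\cdot\bfn\}_e\|_{L^2(e)}\le Ch^{-1/2}\|\sqrt{\beta}\nabla v_h\|_{L^2(\omega_e)}$, where $\omega_e$ is the union of those two elements; then Young's inequality with a small parameter, summation, and the finite overlap of the patches $\omega_e$ absorb an arbitrarily small fraction of $\|\sqrt{\beta}\nabla v_h\|^2_{L^2(\Omega)}$ at the cost of a fixed multiple of $h^{-1}\sum_e\|[v_h]_e\|^2_{L^2(e)}$. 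Since the penalty coefficient carries the extra factor $\sigma\tau^{-1}$, choosing $\sigma$ sufficiently large (it is enough that $\sigma$ exceed a fixed multiple of $\tau$) makes it dominate that multiple, leaving $a_h(v_h,v_h)\ge c_1\|\sqrt{\beta}\nabla v_h\|^2_{L^2(\Omega)}+c_2\sigma h^{-1}\tau^{-1}\sum_e\|[v_h]_e\|^2_{L^2(e)}$. It remains to restore the flux-average piece of the norm, but on $S_h(t)$ it is subordinate: \eqref{trace_inequa} again yields $h\tau\sigma^{-1}\sum_e\|\{\beta\nabla v_h\cdot\bfn\}_e\|^2_{L^2(e)}\le C\tau\sigma^{-1}\|\sqrt{\beta}\nabla v_h\|^2_{L^2(\Omega)}$, a small fraction of the gradient term once $\sigma$ is large, so that $\vertiii{v_h}^2_h\le C(\|\sqrt{\beta}\nabla v_h\|^2_{L^2(\Omega)}+\sigma h^{-1}\tau^{-1}\sum_e\|[v_h]_e\|^2_{L^2(e)})$; combining this with the previous lower bound gives \eqref{thm_coer_eq01}.

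The main difficulty is not deep but requires care: keeping every constant independent of $h$, $\tau$ and of the interface position while juggling the three weights $\sigma\tau^{-1}$, $1$, $\tau\sigma^{-1}$, and tracking the threshold on $\sigma$ through both the cross-term absorption and the control of the flux-average piece. The only genuinely IFE-specific ingredient is the trace inequality \eqref{trace_inequa}; it is exactly what lets the flux-average contributions be controlled by the gradient uniformly in the interface location, which would fail for a generic element-wise $H^1$ function. Finally, the equivalent representation \eqref{bilinear_form_3} shows that for $v_h\in S_h(t)$ all penalties on non-interface edges drop out, so nothing is lost by formulating everything through the uniform norm \eqref{norm_1}.
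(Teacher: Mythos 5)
Your proposal is correct and follows essentially the same route as the paper, which simply defers to Theorems 4.3 and 4.4 of \cite{2018GuoLinZhuang}: Cauchy--Schwarz with the weight splitting $(h\tau\sigma^{-1})^{1/2}(h^{-1}\tau^{-1}\sigma)^{1/2}$ for the boundedness on $W_h$, and the IFE trace inequality \eqref{trace_inequa} plus Young's inequality with a sufficiently large penalty for the coercivity on $S_h(t)$. The only difference is that you carry out that standard argument explicitly and track the $\tau$-scaled super-penalty $\sigma_0=\sigma\tau^{-1}$, which is done correctly.
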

\begin{proof}
The argument is almost as the same as Theorems 4.3 and 4.4 in \cite{2018GuoLinZhuang}.
\end{proof}
Since $a_h(\cdot,\cdot)$ is coercive, we can define another uniform norm directly induced from $a_h(\cdot,\cdot)$:
\begin{equation}
\label{norm_2}
\vertiii{v_h}^2_a = a_h(v_h,v_h), ~~~~ \forall v_h\in S_h(t),
\end{equation}
which, again, is independent of $\Gamma_h(t)$ during the dynamics. The two inequalities in Theorem \ref{thm_coer} together show that $\vertiii{\cdot}_a$ is equivalent to $\vertiii{\cdot}_h$ on each $S_h(t)$ where the hidden constant is uniformly bounded. However we emphasize that $a_h(\cdot,\cdot)$ is not coercive on general broken Sobolev spaces such as $W_h$. So in addition to \eqref{thm_coer_eq01}, we also need the following weak coercivity of which the underling idea is also used in \cite{2006ChrysafinosWalkington,2007FengKarakashian} for error analysis of DG methods on dynamic meshes.
\begin{theorem}
\label{thm_weak_coer}
Suppose $\sigma$ is sufficiently large, then there exist constants $\delta_0$ and $\delta_1$ such that
\begin{equation}
\label{thm_weak_coer_eq0}
a_h(v_h,v_h) \ge \delta_0 \vertiii{v_h}^2_h - \delta_1 h \tau \sigma \sum_{e\in\mathring{\mathcal{E}}_h} \vertii{ \{\beta \nabla v_h\cdot\bfn\}_e }^2_{L^2(e)}, ~~~~~ \forall v_h\in W_h. 
\end{equation}
\end{theorem}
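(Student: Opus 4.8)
This is a discontinuous-Galerkin-type coercivity estimate, but on the \emph{full} broken space $W_h$ rather than on $S_h(t)$, where $a_h(\cdot,\cdot)$ fails to be coercive. The plan is the usual one for interior penalty forms: test with $w_h=v_h$ in \eqref{bilinear_form_1}, expand, absorb the two (identical, by symmetry) consistency cross terms by Young's inequality, and then keep careful track of the powers of $\sigma$ and $\tau$ so that the leftover term is exactly a multiple of the average-flux terms in the stated negative remainder. Unlike in Theorem~\ref{thm_coer}, where the IFE trace inequality \eqref{trace_inequa} lets one bound $\|\{\beta\nabla v_h\cdot\bfn\}_e\|_{L^2(e)}$ by $\nabla v_h$ on the element and hence hide this term, no such inequality is available for general $v_h\in W_h$; so this term must remain, and the point of the statement is precisely that it can be controlled at the cost of a crude (but harmless) loss in the $\sigma$-scaling.

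The concrete steps I would carry out: (i) With $\sigma_0=\sigma\tau^{-1}$, write
\[
a_h(v_h,v_h)=\|\sqrt{\beta}\nabla v_h\|^2_{L^2(\Omega)}-2\sum_{e\in\mathring{\mathcal{E}}_h}\int_e\{\beta\nabla v_h\cdot\bfn\}_e[v_h]_e\,ds+\frac{\sigma}{\tau}\sum_{e\in\mathring{\mathcal{E}}_h}\frac{1}{|e|}\|[v_h]_e\|^2_{L^2(e)}.
\]
(ii) On each edge apply, for a free parameter $\varepsilon\in(0,1)$,
\[
2\left|\int_e\{\beta\nabla v_h\cdot\bfn\}_e[v_h]_e\,ds\right|\le\frac{\varepsilon\sigma}{\tau|e|}\|[v_h]_e\|^2_{L^2(e)}+\frac{\tau|e|}{\varepsilon\sigma}\|\{\beta\nabla v_h\cdot\bfn\}_e\|^2_{L^2(e)},
\]
so that the jump part is dominated by the penalty term while the flux part comes out with weight $\tau|e|/(\varepsilon\sigma)$. (iii) Use shape-regularity/quasi-uniformity, $c_1h\le|e|\le c_2h$, to replace $|e|$ by $h$ in all weights. (iv) Choose $\varepsilon=\tfrac12$ and $\delta_0=\min\{1,(2c_2)^{-1}\}$; then the surviving $\|\sqrt{\beta}\nabla v_h\|^2$ term and the remaining fraction of the jump penalty together bound $\delta_0$ times the first two terms of $\vertiii{v_h}^2_h$ in \eqref{norm_1}, and also leave room to add back the missing $\delta_0\,h\tau\sigma^{-1}\sum_e\|\{\beta\nabla v_h\cdot\bfn\}_e\|^2_{L^2(e)}$ piece of $\delta_0\vertiii{v_h}^2_h$ at the price of enlarging the negative flux constant. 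This yields
\[
a_h(v_h,v_h)\ge\delta_0\vertiii{v_h}^2_h-(\delta_0+2c_2)\,h\tau\sigma^{-1}\sum_{e\in\mathring{\mathcal{E}}_h}\|\{\beta\nabla v_h\cdot\bfn\}_e\|^2_{L^2(e)}.
\]
(v) Finally, since $\sigma$ is taken sufficiently large we may assume $\sigma\ge1$, hence $\sigma^{-1}\le\sigma$, and the claim follows with $\delta_1=\delta_0+2c_2$.

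I do not expect a genuine obstacle here; it is a routine estimate. The only point requiring care is bookkeeping of the $\sigma$- and $\tau$-powers so that the negative remainder involves \emph{only} the average-flux quantities and appears with the weight $h\tau\sigma$ as written — in particular, resisting the temptation to ``optimize'' the constant and instead accepting the lossy bound $\sigma^{-1}\le\sigma$, which is what makes this estimate usable later (the super-penalty $\sigma_0=\sigma\tau^{-1}$ combined with a small time step absorbs this term in the subsequent stability and error arguments). This is the same mechanism as the weak coercivity used on dynamic meshes in \cite{2006ChrysafinosWalkington,2007FengKarakashian}.
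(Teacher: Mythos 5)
Your proposal is correct and follows essentially the same route as the paper's proof: expand $a_h(v_h,v_h)$, absorb the consistency cross terms into the jump penalty via Young's inequality with a free parameter, and leave the average-flux term as the negative remainder (the paper takes $\delta_0=\epsilon=1/4$ and obtains the remainder with weight $\sigma^{-1}h\tau$, i.e.\ $\delta_1=5/4$, passing to the stated $h\tau\sigma$ weight just as you do via $\sigma^{-1}\le\sigma$). Your explicit bookkeeping of $|e|\sim h$ through quasi-uniformity and of adding back the missing flux portion of $\vertiii{\cdot}_h$ only makes explicit what the paper leaves implicit.
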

\begin{proof}
Let $\delta_0$ be a constant to be determined later. Then the Young's inequality yields
\begin{equation}
\begin{split}
\label{thm_weak_coer_eq1}
a_h(v_h,v_h) - \delta_0 \vertiii{v_h}^2_h &= (1-\delta_0) \| \sqrt{\beta} \nabla v_h \|^2_{L^2(\Omega)} + (1-\delta_0) \sigma h^{-1} \tau^{-1} \sum_{e\in\mathring{\mathcal{E}}_h} \| [v_h]_e \|^2_{L^2(e)} \\
&- \sum_{e\in\mathring{\mathcal{E}}_h}  \{ \beta \nabla v_h \cdot \bfn \}_e [ v_h ]_e ds - \delta_0  \sigma^{-1} h \tau \sum_{e\in\mathring{\mathcal{E}}_h} \vertii{ \{\beta \nabla v_h\cdot\bfn\}_e }^2_{L^2(e)} \\
& \ge (1-\delta_0) \| \sqrt{\beta} \nabla v_h \|^2_{L^2(\Omega)} +  (1-\delta_0 - \epsilon) \sigma h^{-1} \tau^{-1} \sum_{e\in\mathring{\mathcal{E}}_h} \| [v_h]_e \|^2_{L^2(e)} \\
-& \left( \delta_0 + \frac{1}{4 \epsilon} \right) \sigma^{-1} h \tau  \sum_{e\in\mathring{\mathcal{E}}_h} \vertii{ \{\beta \nabla v_h\cdot\bfn\}_e }^2_{L^2(e)}.
\end{split} 
\end{equation}
Taking $\delta_0=\epsilon=1/4$ and $\delta_1=\delta_0+1/(4\epsilon)=5/4$ finishes the proof.
\end{proof}

In the discussion below, we always assume $\sigma$ is sufficiently large such that the coercivity above hold without explicitly mentioning again. Then we present a discrete Poincar\'e inequality and the stability of the elliptic projection $\mathcal{R}_h(t)$.
\begin{theorem}
\label{thm_dis_poinc}
There exists a constant $C$ such that for each $v_h\in W_h$
\begin{equation}
\label{thm_dis_poinc_eq0}
\| v_h \|_{L^2(\Omega)} \le C \vertiii{ v_h }_h.
\end{equation}
\end{theorem}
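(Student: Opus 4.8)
The plan is to prove the discrete Poincar\'e inequality $\| v_h \|_{L^2(\Omega)} \le C \vertiii{v_h}_h$ by the usual route: transfer $v_h \in W_h$ to its $H^1$-conforming FE image, apply the classical (continuous) Poincar\'e–Friedrichs inequality there, and then control the difference between $v_h$ and its image using the broken-norm terms already present in $\vertiii{\cdot}_h$. Concretely, I would first write $v_h = (v_h - \tilde{\mathcal{I}}_h v_h) + \tilde{\mathcal{I}}_h v_h$ — but a subtlety is that $\tilde{\mathcal{I}}_h$ was only defined on the IFE space $S_h(t)$, not on all of $W_h$. Since the theorem is stated for $v_h \in W_h$, I would instead work directly: for a general $v_h \in W_h$, let $w_h \in \widetilde{S}_h$ be the continuous piecewise-linear function taking the nodal values of $v_h$ at $\mathcal{N}_h$ (well-defined since $W_h$-functions are continuous at mesh nodes and vanish on $\partial\Omega$), so $w_h \in H^1_0(\Omega)$. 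Then by the standard Poincar\'e inequality, $\| w_h \|_{L^2(\Omega)} \le C | w_h |_{H^1(\Omega)} \le C \| \sqrt{\beta} \nabla w_h \|_{L^2(\Omega)}/\sqrt{\beta_{\min}}$.

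Next I would estimate $| w_h |_{H^1(T)}$ elementwise in terms of data local to $T$. On each element, $w_h|_T$ is the linear interpolant of the three nodal values $v_h(A_i)$, and $| w_h |_{H^1(T)} \le C \| \nabla w_h \|_{L^2(T)}$ is bounded by a combination of $\| \nabla v_h \|_{L^2(T)}$ plus, on interface elements where $v_h$ may be genuinely broken inside $T$, a contribution controlled by the jumps of $v_h$ across the interface edges of $T$. Here is where I would exploit the fact that $\vertiii{\cdot}_h$ includes the full penalty sum $\sigma h^{-1}\tau^{-1}\sum_{e} \| [v_h]_e \|^2_{L^2(e)}$ over all edges: the node-value differences $v_h(A_i)|_{T} - v_h(A_i)|_{T'}$ for neighbouring elements $T, T'$ are controlled by the edge-jump term on the shared edge together with tangential derivatives on that edge, which in turn reduce via the trace and inverse inequalities \eqref{trace_inequa}, \eqref{inver_inequa} to $\| \nabla v_h \|_{L^2(T)}$. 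Summing over all elements and edges gives $| w_h |^2_{H^1(\Omega)} \le C \vertiii{v_h}^2_h$.

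It then remains to bound $\| v_h - w_h \|_{L^2(\Omega)}$. Elementwise, $\mathbb{P}_0(T) \subseteq S_{h,T}$ so constants are reproduced by the nodal map on each element, and an argument exactly parallel to Lemma \ref{thm_interp_err} (using Lemma \ref{lem_pf_inequa} and the local $L^2$ stability of the nodal map, which extends to $W_h$ by the same trace/inverse-inequality computation) gives $\| v_h - w_h \|_{L^2(T)} \le C h_T | v_h |_{H^1(T)}$; summing and using $h_T \le C$ yields $\| v_h - w_h \|_{L^2(\Omega)} \le C | v_h |_{H^1(\Omega)} \le C \| \sqrt{\beta}\nabla v_h \|_{L^2(\Omega)} \le C\vertiii{v_h}_h$. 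Combining the two pieces via the triangle inequality finishes the proof.

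The main obstacle I anticipate is not any single estimate but the bookkeeping of the node-value jumps: showing that $\sum_T | w_h |^2_{H^1(T)}$ — equivalently the sum of squared differences of nodal values of $v_h$ seen from adjacent elements — is genuinely dominated by $\| \sqrt\beta \nabla v_h \|^2_{L^2(\Omega)}$ plus the edge-penalty sum, uniformly in how the interface cuts the mesh. The delicate case is an interface element whose interior discontinuity across $\Gamma_h^T$ could make $v_h$ jump between vertices lying in $T^-$ and $T^+$; one must check that such a jump is still captured, either through $\nabla v_h$ on $T$ (for IFE-type functions) or through the penalty on the interface edges bounding $T$. Since the theorem is stated for all of $W_h$, the argument cannot rely on the special structure of IFE shape functions and must use only nodal continuity, local $H^1$-regularity, and the norm terms themselves — so the quasi-uniformity and shape-regularity of $\mathcal{T}_h$ must be invoked carefully to keep all constants independent of $h$ and of the interface position.
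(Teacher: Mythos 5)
Your proposal has a genuine gap, and it is exactly at the point you flag in your last paragraph: the whole argument hinges on the nodal map $v_h\mapsto w_h\in\widetilde{S}_h$ being stable on all of $W_h$, and it is not. The space $W_h$ in \eqref{split_space_2} is infinite-dimensional: on each element it contains arbitrary $H^1(T)$ functions, constrained only by continuity at the mesh nodes, the boundary condition, and $\nabla v_h\cdot\bfn\in L^2(e)$. In two dimensions point evaluation is not bounded by $H^1$ norms, so the nodal values $v_h(A_i)$ — although defined for members of $W_h$ — are not controlled by $\vertiii{v_h}_h$. For instance, a logarithmic spike centered at a vertex $A$ (radially symmetric near $A$, so its normal derivative vanishes on the adjacent edges and it still belongs to $W_h$) can satisfy $v_h(A)=M$ while $\|\nabla v_h\|_{L^2(\Omega)}$, $\|v_h\|_{L^2(\Omega)}$ and all edge jumps are arbitrarily small. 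Consequently neither of your key estimates, $\|v_h-w_h\|_{L^2(T)}\le Ch_T|v_h|_{H^1(T)}$ and $|w_h|_{H^1(T)}\le C\,(\text{local }\vertiii{\cdot}_h\text{ data})$, can hold uniformly on $W_h$. The steps you import from Lemma \ref{lem_stability} and Lemma \ref{thm_interp_err} do not transfer either: the inverse inequality \eqref{inver_inequa} and trace inequality \eqref{trace_inequa} are proved only on the finite-dimensional IFE spaces $S_{h,T}(t)$, and the vertex-to-edge trace step in \eqref{lem_stability_eq1} needs $v_h|_e\in H^1(e)$, which membership in $W_h$ does not supply (only the normal component of the gradient is assumed to lie in $L^2(e)$). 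Even ignoring the interface, the Lagrange interpolant of a general $H^1(T)$ function is not $H^1$-stable in 2D, so the elementwise bound on $|w_h|_{H^1(T)}$ fails already on non-interface elements.

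The statement for all of $W_h$ has to be proved without inverse-type inequalities, and this is what the paper does: its proof is the duality argument of Lemma 2.1 in \cite{1982Arnold}. One solves an auxiliary problem $-\Delta\psi=v_h$ in $\Omega$, $\psi=0$ on $\partial\Omega$, with $\|\psi\|_{H^2(\Omega)}\le C\|v_h\|_{L^2(\Omega)}$, writes $\|v_h\|^2_{L^2(\Omega)}=\sum_T\int_T\nabla v_h\cdot\nabla\psi\,dX-\sum_{e\in\mathring{\mathcal{E}}_h}\int_e[v_h]_e\,\nabla\psi\cdot\bfn\,ds$ by elementwise integration by parts (using that $\nabla\psi\cdot\bfn$ is single-valued across interior edges), and then bounds the edge sum by Cauchy--Schwarz with the weights $|e|^{\mp1/2}$, a standard trace estimate for $\nabla\psi$ on edges, and the elliptic regularity of $\psi$. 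This uses only $v_h|_T\in H^1(T)$ and the jump-penalty part of \eqref{norm_1}, whose weight $\sigma h^{-1}\tau^{-1}\ge\sigma h^{-1}T^{-1}$ dominates the needed $|e|^{-1}$ factor (which is one source of the $T$-dependence of the constant noted in Remark \ref{rem_const}). If you only needed the inequality on $S_h(t)$, a stability-through-the-isomorphism argument in the spirit of your proposal could be made to work, since there \eqref{inver_inequa} and \eqref{trace_inequa} are available; but as stated for $W_h$ the duality route is the one that succeeds.
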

\begin{proof}
The proof is in the same spirit of Lemma 2.1 in \cite{1982Arnold}.
\end{proof}
\begin{theorem}
\label{thm_lap_stab}
There exists a constant $C$ such that $ \vertiii{ \mathcal{R}_h(t) v_h }_h \le C \vertiii{ v_h }_h$.
%\begin{equation}
%\label{thm_lap_stab_eq0}
%\| \mathcal{L}_h(t) v_h \|_{L^2(\Omega)} \le C \| v_h \|_{L^2(\Omega)} ~~~ \text{and} ~~~ \vertiii{ \mathcal{R}_h(t) v_h }_h \le C \vertiii{ v_h }_h.
%\end{equation}
\end{theorem}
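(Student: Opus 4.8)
The plan is to run the textbook C\'ea/Lax--Milgram argument, relying only on the coercivity of $a_h(\cdot,\cdot)$ on $S_h(t)$ (Theorem~\ref{thm_coer}, inequality~\eqref{thm_coer_eq01}) and its boundedness on the full broken space $W_h$ (inequality~\eqref{thm_coer_eq02}). Write $R:=\mathcal{R}_h(t)v_h\in S_h(t)$. Since $R$ itself is an admissible test function in the defining relation~\eqref{ellip_proj}, I would first test with $v_h=R$ to get $a_h(R,R)=a_h(v_h,R)$; the right-hand side is legitimate because $v_h\in W_h$ and~\eqref{thm_coer_eq02} is stated for all of $W_h\times W_h$, so no extra regularity of $v_h$ is needed here.

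Next I would apply coercivity on the left and boundedness on the right:
\begin{equation*}
\kappa_1\vertiii{R}^2_h \le a_h(R,R) = a_h(v_h,R) \le \kappa_2 \vertiii{v_h}_h\,\vertiii{R}_h.
\end{equation*}
If $\vertiii{R}_h=0$ the claimed inequality is trivial; otherwise I divide by $\vertiii{R}_h$ to conclude $\vertiii{\mathcal{R}_h(t)v_h}_h=\vertiii{R}_h\le (\kappa_2/\kappa_1)\vertiii{v_h}_h$, which is the assertion with $C=\kappa_2/\kappa_1$. The point worth emphasizing in the writeup is that, because $a_h(\cdot,\cdot)$ and the norm $\vertiii{\cdot}_h$ have been set up in~\eqref{bilinear_form_1} and~\eqref{norm_1} to keep the same format for every interface location, the constants $\kappa_1,\kappa_2$ are uniform in $t$, so $C$ is independent of $\Gamma(t)$ and of $h$.

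There is essentially no real obstacle here — the only things to be careful about are (i) invoking~\eqref{thm_coer_eq02} rather than~\eqref{thm_coer_eq01} for the term $a_h(v_h,R)$, since $v_h$ need only lie in $W_h$, and (ii) making sure the well-posedness of $\mathcal{R}_h(t)$ used implicitly in writing down $R$ is exactly the coercivity already quoted after~\eqref{discre_proj}. Everything else is a one-line estimate, so this proof is short by design and will be used repeatedly in the subsequent fully discrete error analysis.
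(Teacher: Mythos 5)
Your argument is correct and is exactly what the paper means by ``it immediately follows from Theorem \ref{thm_coer}'': test the defining relation \eqref{ellip_proj} with $\mathcal{R}_h(t)v_h$ itself, use coercivity \eqref{thm_coer_eq01} on the left (valid since $\mathcal{R}_h(t)v_h\in S_h(t)$) and boundedness \eqref{thm_coer_eq02} on the right (valid for $v_h\in W_h$), and divide to get $C=\kappa_2/\kappa_1$, uniform in $t$ by the uniform format of $a_h$ and $\vertiii{\cdot}_h$. No differences from the paper's intended proof.
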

\begin{proof} 
It immediately follows from Theorem \ref{thm_coer}. 
\end{proof}

Now let's go back to the operators $\mathcal{R}^n_h$ and $\mathcal{L}^n_h$ at $t_n$, $n=0,1,...,N$, and show the following estimates.

\begin{theorem}
\label{thm_Rn_h_err}
There exists a constant $C$ such that
\begin{equation}
\label{thm_Rn_h_err_eq0}
\vertiii{ \mathcal{R}^n_h v_{h} - v_{h} }_h \le Ch \| \mathcal{L}^{n-1}_h v_h \|_{L^2(\Omega)} + C \tau^{1/2} \vertiii{ v_h }_{h} , ~~~~ \forall v_h\in S^{n-1}_h, ~~ n=1,...,N.
\end{equation}
\end{theorem}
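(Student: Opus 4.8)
The plan is to reduce \eqref{thm_Rn_h_err_eq0} to an interpolation-type bound via the coercivity of $a_h(\cdot,\cdot)$, and then to estimate that interpolation error by using that $S^{n-1}_h$ and $S^n_h$ differ only on the thin band of elements that the interface sweeps over $[t_{n-1},t_n]$. First I would note that $\mathcal{I}^n_h v_h\in S^n_h$ (the nodal values of $v_h$ vanish on $\partial\Omega$ and $\Gamma$ stays away from $\partial\Omega$), and that $a_h(\mathcal{R}^n_h v_h-v_h,\phi_h)=0$ for all $\phi_h\in S^n_h$ by \eqref{ellip_proj}. Testing with $\phi_h=\mathcal{R}^n_h v_h-\mathcal{I}^n_h v_h$, using \eqref{ellip_proj} once more, and invoking \eqref{thm_coer_eq01}--\eqref{thm_coer_eq02},
\begin{equation*}
\kappa_1\vertiii{\mathcal{R}^n_h v_h-\mathcal{I}^n_h v_h}_h^2\le a_h(v_h-\mathcal{I}^n_h v_h,\mathcal{R}^n_h v_h-\mathcal{I}^n_h v_h)\le\kappa_2\vertiii{v_h-\mathcal{I}^n_h v_h}_h\vertiii{\mathcal{R}^n_h v_h-\mathcal{I}^n_h v_h}_h,
\end{equation*}
so a triangle inequality gives $\vertiii{\mathcal{R}^n_h v_h-v_h}_h\le(1+\kappa_2/\kappa_1)\vertiii{w_h}_h$ with $w_h:=v_h-\mathcal{I}^n_h v_h$, and the task becomes bounding $\vertiii{w_h}_h$.

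Next I would localize. On every element that is a non-interface element at both $t_{n-1}$ and $t_n$, $v_h$ is affine and $\mathcal{I}^n_h v_h$ is the affine interpolant of its nodal values, so $w_h$ vanishes there; hence $w_h$ is supported on the $O(h)$-wide tube $\mathcal{B}_n:=\mathcal{T}^i_h(t_{n-1})\cup\mathcal{T}^i_h(t_n)$, and $\vertiii{w_h}_h^2$ reduces to a sum over $\mathcal{B}_n$ and its edges. Since $\mathcal{V}$ is bounded, $\Gamma_h(t_{n-1})$ and $\Gamma_h(t_n)$ stay within Hausdorff distance $C\tau$, so on each $T\in\mathcal{B}_n$ the segments $\Gamma_h(t_{n-1})\cap T$ and $\Gamma_h(t_n)\cap T$ (possibly one of them empty) bound a sliver of width $O(\tau)$; and since $w_h|_T$ is a broken affine function vanishing at the three vertices of $T$, it is essentially concentrated on that sliver. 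Using the explicit form of the IFE correction across a segment together with \eqref{inver_inequa}--\eqref{boundedness} (and the mild condition $\tau\le Ch$, which holds in the regime of interest), I expect the local bounds $\|\nabla w_h\|_{L^2(T)}+(h\tau)^{-1/2}\|[w_h]_e\|_{L^2(e)}+(h\tau)^{1/2}\|\{\beta\nabla w_h\cdot\bfn\}_e\|_{L^2(e)}\le C(\tau/h)^{1/2}\|\nabla v_h\|_{L^2(T)}$, which after summation give $\vertiii{w_h}_h\le C(\tau/h)^{1/2}\|\nabla v_h\|_{L^2(\mathcal{B}_n)}$.

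Finally I would establish a discrete local energy (Caccioppoli-type) estimate $\|\nabla v_h\|_{L^2(\mathcal{B}_n)}\le Ch\|\mathcal{L}^{n-1}_h v_h\|_{L^2(\Omega)}+Ch^{1/2}\vertiii{v_h}_h$. For this I would pick a continuous piecewise-linear cutoff $\omega$ equal to $1$ on $\mathcal{B}_n$, vanishing outside a one-element enlargement $\widehat{\mathcal{B}}_n$, with $0\le\omega\le1$ and $\|\nabla\omega\|_{L^\infty}\le C/h$, test the identity $a_h(v_h,\phi_h)=(\mathcal{L}^{n-1}_h v_h,\phi_h)_{L^2(\Omega)}$ (valid for $\phi_h\in S^{n-1}_h$) against $\phi_h=\mathcal{I}^{n-1}_h(\omega^2 v_h)$, peel off $\nabla\omega$, absorb the cross terms, control the commutator of $\mathcal{I}^{n-1}_h$ with multiplication by $\omega^2$ using \eqref{inver_inequa}--\eqref{boundedness}, and bound the residual $L^2$-norms of $v_h$ by $\vertiii{v_h}_h$ through the discrete Poincar\'e inequality of Theorem~\ref{thm_dis_poinc}. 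Combining this with the previous step and $\tau\le Ch$ then gives $\vertiii{w_h}_h\le C(\tau h)^{1/2}\|\mathcal{L}^{n-1}_h v_h\|_{L^2(\Omega)}+C\tau^{1/2}\vertiii{v_h}_h\le Ch\|\mathcal{L}^{n-1}_h v_h\|_{L^2(\Omega)}+C\tau^{1/2}\vertiii{v_h}_h$, which together with the first step is exactly \eqref{thm_Rn_h_err_eq0}.

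The hard part will be this last step: separating $\|\nabla v_h\|_{L^2(\mathcal{B}_n)}$ into the globally small contribution $h^{1/2}\vertiii{v_h}_h$ and the interface-concentrated contribution $h\|\mathcal{L}^{n-1}_h v_h\|_{L^2(\Omega)}$ requires estimating the cutoff cross terms and the interpolation commutator without reintroducing the $h^{-1}$ carried by $\nabla\omega$ — precisely where the boundedness of the IFE shape functions and the discrete Poincar\'e inequality must work in concert. The sliver analysis of the second step is more routine, but it relies on the standard unfitted-mesh hypotheses on $\Gamma$ and on $\tau$ being at most of order $h$.
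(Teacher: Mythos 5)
Your first step (the C\'ea-type reduction $\vertiii{\mathcal{R}^n_h v_h - v_h}_h \le C\vertiii{v_h-\mathcal{I}^n_h v_h}_h$ via Galerkin orthogonality, coercivity on $S^n_h$ and boundedness on $W_h$) is fine, but it shifts the entire burden onto two claims that you leave unproved, and each of them is at least as delicate as the theorem itself. The first is the ``sliver'' estimate $\vertiii{v_h-\mathcal{I}^n_h v_h}_h \le C(\tau/h)^{1/2}\|\nabla v_h\|_{L^2(\mathcal{B}_n)}$. This is a quantitative perturbation estimate of IFE shape functions (gradients, edge jumps weighted by $\sigma h^{-1}\tau^{-1}$, and flux averages) with respect to an $O(\tau+h^2)$ displacement of the interface segment inside each element; nothing of this kind is available in \eqref{recall_theorem} or the cited IFE literature, and it is exactly the type of inter-space interpolation assumption ((11)--(15) of \cite{2013Zunino}) that Remark \ref{rem_stability_2} of this paper explicitly describes as an open problem ``difficult to prove even for the IFE spaces.'' The paper's proof is structured precisely to avoid ever comparing $S^{n-1}_h$ with $S^n_h$ through an interpolation/perturbation bound: it applies the weak coercivity of Theorem \ref{thm_weak_coer} to $\mathcal{R}^n_hv_h-v_h\in W_h$, uses the orthogonality of both $\mathcal{R}^n_h$ and $\mathcal{R}^{n-1}_h$ together with the identity $a_h(\mathcal{R}^{n-1}_h\mathcal{R}^n_hv_h-v_h,v_h)=(\mathcal{R}^{n-1}_h\mathcal{R}^n_hv_h-v_h,\mathcal{L}^{n-1}_hv_h)_{L^2(\Omega)}$, converts the $L^2$ factor to $h\vertiii{\mathcal{R}^n_hv_h-v_h}_h$ by duality with two auxiliary elliptic interface problems, solves the resulting quadratic inequality, and produces the $C\tau^{1/2}\vertiii{v_h}_h$ term from the flux-average part of the weak coercivity via the trace inequality \eqref{trace_inequa} and the stability of $\mathcal{R}^n_h$; no geometric tracking of the swept band and no relation between $\tau$ and $h$ is needed, whereas your route additionally requires $\tau\le Ch$, a hypothesis absent from the statement.

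The second unproved claim, the discrete Caccioppoli-type bound $\|\nabla v_h\|_{L^2(\mathcal{B}_n)}\le Ch\|\mathcal{L}^{n-1}_hv_h\|_{L^2(\Omega)}+Ch^{1/2}\vertiii{v_h}_h$, is not something your cutoff argument can be expected to deliver in this form. The band $\mathcal{B}_n$ has width $O(h)$ but length $O(1)$, so subtracting a constant does not make $v_h$ small along it, and the natural band-trace estimate $\|\nabla v_h\|^2_{L^2(\mathcal{B}_n)}\le Ch\|\nabla v_h\|_{L^2(\Omega)}\|\nabla v_h\|_{H^1}$ (or its discrete surrogate through $\mathcal{L}^{n-1}_h$) only yields $\|\nabla v_h\|_{L^2(\mathcal{B}_n)}\le Ch^{1/2}\|\mathcal{L}^{n-1}_hv_h\|_{L^2(\Omega)}+Ch^{1/2}\vertiii{v_h}_h$, i.e.\ $h^{1/2}$ rather than $h$ in front of the discrete Laplacian; fed back into your final combination this gives $C\tau^{1/2}\|\mathcal{L}^{n-1}_hv_h\|_{L^2(\Omega)}$, which matches \eqref{thm_Rn_h_err_eq0} only if $\tau\le Ch^2$ and, more importantly, would break the absorption argument in \eqref{thm_stab_eq6}--\eqref{thm_stab_eq8} where the small factor $\gamma$ from $h^2\le\gamma\tau$ is essential. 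In the paper the extra factor of $h$ on the $\|\mathcal{L}^{n-1}_hv_h\|$ term is obtained from elliptic regularity through the duality argument \eqref{thm_Rn_err_eq6_2}--\eqref{thm_Rn_err_eq6_6}, not from locality of the band; you would need to supply an argument of comparable strength, and your sketch (cutoff, commutator with $\mathcal{I}^{n-1}_h$, discrete Poincar\'e) does not indicate where that extra power of $h$ would come from. So, while the scalings you propose are not obviously inconsistent, the proof as outlined has genuine gaps at both of its load-bearing steps and is not a faithful or complete route to \eqref{thm_Rn_h_err_eq0}.
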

\begin{proof}
Using Theorem \ref{thm_weak_coer}, for each $v_h\in S^{n-1}_h$ we have
\begin{equation}
\begin{split}
\label{thm_Rn_err_eq4}
 \vertiii{ \mathcal{R}^n_h v_{h} - v_{h} }_h & \le C  \verti{ a_h(\mathcal{R}^n_h v_{h} - v_{h}, \mathcal{R}^n_h v_{h} - v_{h}) }^{1/2} \\
&+ C h^{1/2} \tau^{1/2} \sigma^{1/2} \sum_{e\in\mathring{\mathcal{E}}_h} \vertii{ \{\beta \nabla ( \mathcal{R}^n_h v_{h} - v_{h} ) \cdot\bfn\}_e }_{L^2(e)}
\end{split}
\end{equation}
where we denote $\chi_1=\verti{ a_h( \mathcal{R}^n_h v_{h} - v_{h}, \mathcal{R}^n_h v_{h} - v_{h}) }^{1/2}$ and $\chi_2 = \sum_{e\in\mathring{\mathcal{E}}_h} \vertii{ \{\beta \nabla ( \mathcal{R}^n_h v_{h} - v_{h} ) \cdot\bfn\}_e }_{L^2(e)}$, respectively, for the right hand side. For $\chi_1$, inspired by argument of Lemma 2.2 in \cite{1991ErikssonJohnsonI} we obtain:
\begin{equation}
\begin{split}
\label{thm_Rn_err_eq5}
\chi^2_1 = \verti{ a_h( \mathcal{R}^n_h v_{h} - v_{h}, \mathcal{R}^n_h v_{h} - v_{h}) } & = \verti{ a_h( \mathcal{R}^n_h v_{h} - v_{h}, v_{h}) } = \verti{ a_h( \mathcal{R}^{n-1}_h \mathcal{R}^n_h v_{h} - v_{h}, v_{h}) }
\end{split}
\end{equation}
where we have used the definition of $\mathcal{R}^n_h$ and $\mathcal{R}^{n-1}_h$ and the fact $v_h\in S^{n-1}_h$. Then since $\mathcal{R}^{n-1}_h \mathcal{R}^n_h v_{h}\in S^{n-1}_h$, using the discrete Laplace operator \eqref{discre_proj}, we have
\begin{equation}
\label{thm_Rn_err_eq6}
\verti{ a_h( \mathcal{R}^{n-1}_h \mathcal{R}^n_h v_{h} - v_{h}, v_{h}) } = |( \mathcal{R}^{n-1}_h \mathcal{R}^n_h v_{h} - v_{h}, \mathcal{L}^{n-1}_h v_h)_{L^2(\Omega)}| \le \| \mathcal{R}^{n-1}_h \mathcal{R}^n_h v_{h} - v_{h} \|_{L^2(\Omega)} \| \mathcal{L}^{n-1}_h v_h \|_{L^2(\Omega)}.
\end{equation}
We need to estimate the first term in the right hand side of \eqref{thm_Rn_err_eq6}. For this purpose, we split this term into
\begin{equation}
\label{thm_Rn_err_eq6_1}
\mathcal{R}^{n-1}_h \mathcal{R}^n_h v_{h} - v_{h} = \left( \mathcal{R}^{n-1}_h - \mathcal{I} \right) \left( \mathcal{R}^{n}_h - \mathcal{I} \right)v_h + \left( \mathcal{R}^{n}_h - \mathcal{I} \right)v_h.
\end{equation}
By the duality argument, we define two auxiliary functions $z_1\in \widetilde{H}^2_0(\Omega)$ and $z_2\in \widetilde{H}^2_0(\Omega)$ with the interface $\Gamma(t_{n-1})$ and $\Gamma(t_{n})$, respectively, satisfying the equations
\begin{equation}
\begin{split}
\label{thm_Rn_err_eq6_2}
& \nabla\cdot(\beta \nabla z_1) =  \left( \mathcal{R}^{n-1}_h - \mathcal{I} \right) \left( \mathcal{R}^{n}_h - \mathcal{I} \right)v_h,\\
& \nabla\cdot(\beta \nabla z_2) =  \left( \mathcal{R}^{n}_h - \mathcal{I} \right)v_h.
\end{split}
\end{equation}
For first equation in \eqref{thm_Rn_err_eq6_2}, multiplying it by $ \left( \mathcal{R}^{n-1}_h - \mathcal{I} \right) \left( \mathcal{R}^{n}_h - \mathcal{I} \right)v_h$ and noticing the penalties of $a_h$ are added on every interior edge, we use integration by parts to obtain
\begin{equation}
\begin{split}
\label{thm_Rn_err_eq6_3}
\vertii{ \left( \mathcal{R}^{n-1}_h - \mathcal{I} \right) \left( \mathcal{R}^{n}_h - \mathcal{I} \right)v_h }^2_{L^2(\Omega)} & = a_h( z_1, \left( \mathcal{R}^{n-1}_h - \mathcal{I} \right) \left( \mathcal{R}^{n}_h - \mathcal{I} \right)v_h ) \\
& = a_h( z_1 - \mathcal{I}_h^{n-1} z_1, \left( \mathcal{R}^{n-1}_h - \mathcal{I} \right) \left( \mathcal{R}^{n}_h - \mathcal{I} \right)v_h ) \\
& \le \vertiii{ z_1 - \mathcal{I}_h^{n-1} z_1 }_h \vertiii{ \left( \mathcal{R}^{n-1}_h - \mathcal{I} \right) \left( \mathcal{R}^{n}_h - \mathcal{I} \right)v_h }_h
\end{split}
\end{equation}
where $\mathcal{I}_h^{n-1}$ is the interpolation to $S_h^{n-1}$ for the interface $\Gamma(t_{n-1})$. Then Theorem \ref{thm_appro_energy} and regularity of elliptic interface problems \cite{1998ChenZou,2002HuangZou} yield
\begin{equation}
\label{thm_Rn_err_eq6_4}
\vertiii{ z_1 - \mathcal{I}_h^{n-1} z_1 }_h \le Ch \| z_1 \|_{H^2(\Omega)} \le C h \vertii{ \left( \mathcal{R}^{n-1}_h - \mathcal{I} \right) \left( \mathcal{R}^{n}_h - \mathcal{I} \right)v_h }_{L^2(\Omega)}.
\end{equation}
Putting \eqref{thm_Rn_err_eq6_4} into \eqref{thm_Rn_err_eq6_3}, and using the stability of $\mathcal{R}^{n-1}_h$, we have
\begin{equation}
\label{thm_Rn_err_eq6_5}
\vertii{ \left( \mathcal{R}^{n-1}_h - \mathcal{I} \right) \left( \mathcal{R}^{n}_h - \mathcal{I} \right)v_h }_{L^2(\Omega)}
\le Ch \vertiii{ \left( \mathcal{R}^{n-1}_h - \mathcal{I} \right) \left( \mathcal{R}^{n}_h - \mathcal{I} \right)v_h }_h
\le Ch \vertiii{ \left( \mathcal{R}^{n}_h - \mathcal{I} \right)v_h }_h.
\end{equation}
Employing the second equation in \eqref{thm_Rn_err_eq6_2} with the help of the interpolation $\mathcal{I}_h(t_{n})$, we can similarly obtain
\begin{equation}
\label{thm_Rn_err_eq6_6}
\vertii{  \left( \mathcal{R}^{n}_h - \mathcal{I} \right)v_h }_{L^2(\Omega)} \le Ch \vertiii{ \left( \mathcal{R}^{n}_h - \mathcal{I} \right)v_h }_h.
\end{equation}
%again with Theorem \ref{thm_weak_coer} similar to the one at beginning of this proof for $\| \mathcal{R}^{n-1}_h \mathcal{R}^n_h v_{h} - v_{h} \|_{L^2(\Omega)}=\| \mathcal{R}^{n-1}_h( \mathcal{R}^n_h v_{h} - v_{h}) \|_{L^2(\Omega)}$, we have
Combining \eqref{thm_Rn_err_eq6_5} and \eqref{thm_Rn_err_eq6_6} together and using \eqref{thm_Rn_err_eq4}, we have
\begin{equation}
\begin{split}
\label{thm_Rn_err_eq7}
 \| \mathcal{R}^{n-1}_h  \mathcal{R}^n_h v_{h} - v_{h} \|_{L^2(\Omega)}  \le C h \vertiii{ \mathcal{R}^n_h v_{h} - v_{h} }_h    \le Ch \chi_1 + Ch^{3/2}\tau^{1/2} \sigma^{1/2} \chi_2.
 \end{split}
\end{equation}
Now substituting \eqref{thm_Rn_err_eq6} and \eqref{thm_Rn_err_eq7} into \eqref{thm_Rn_err_eq5}, we obtain
\begin{equation}
\label{thm_Rn_err_eq8}
\chi^2_1 \le Ch \| \mathcal{L}^{n-1}_h v_h \|_{L^2(\Omega)} \chi_1 + Ch^{3/2}\tau^{1/2} \sigma^{1/2} \| \mathcal{L}^{n-1}_h v_h \|_{L^2(\Omega)}  \chi_2.
\end{equation}
Note that this quadratic inequality is certainly solvable. Solving this quadratic inequality, we have
\begin{equation}
\label{thm_Rn_err_eq9}
\chi_1 \le Ch  \| \mathcal{L}^{n-1}_h v_h \|_{L^2(\Omega)} + Ch^{3/4}\tau^{1/4} \sigma^{1/4} \| \mathcal{L}^{n-1}_h v_h \|^{1/2}_{L^2(\Omega)}  \chi^{1/2}_2. 
\end{equation}
Then the arithmetic inequality leads to
\begin{equation}
\label{thm_Rn_err_eq10}
Ch^{3/4}\tau^{1/4} \sigma^{1/4} \| \mathcal{L}^{n-1}_h v_h \|^{1/2}_{L^2(\Omega)}  \chi^{1/2}_2\le  Ch \| \mathcal{L}^{n-1}_h v_h \|_{L^2(\Omega)} + Ch^{1/2}\tau^{1/2} \sigma^{1/2} \chi_2 .
\end{equation}
Combining \eqref{thm_Rn_err_eq10} and \eqref{thm_Rn_err_eq9} we obtain
\begin{equation}
\label{thm_Rn_err_eq11}
 \chi_1 \le Ch \| \mathcal{L}^{n-1}_h v_h \|_{L^2(\Omega)} + Ch^{1/2}\tau^{1/2} \sigma^{1/2} \chi_2 
\end{equation}
which is then put into \eqref{thm_Rn_err_eq4}. Now it remains to estimate $\chi_2$ through the trace inequality \eqref{trace_inequa}:
\begin{equation}
\begin{split}
\label{thm_Rn_err_eq12}
\chi_2  & \le C\sum_{e\in\mathring{\mathcal{E}}_h} \left( \vertii{ \beta \nabla \mathcal{R}^n_h v_{h}  \cdot\bfn }_{L^2(e)} +  \vertii{ \beta \nabla v_{h}  \cdot\bfn }_{L^2(e)} \right) \\
& \le C \sum_{T\in \mathcal{T}_h} \left(  h^{-1/2}_T \vertii{ \beta \nabla \mathcal{R}^n_h v_{h}  }_{L^2(T)} + h^{-1/2}_T \vertii{ \beta \nabla  v_{h}  }_{L^2(T)}  \right) \\
& \le Ch^{-1/2} \left( \vertiii{ \mathcal{R}^n_h v_{h} }_h + \vertiii{  v_{h} }_h \right)  \le Ch^{-1/2}  \vertiii{  v_{h} }_h
\end{split}
\end{equation}
where in the last inequality we have also use the stability of $\mathcal{R}^n_h$ in terms of the norm $\vertiii{\cdot}_h$. Finally combining \eqref{thm_Rn_err_eq11}, \eqref{thm_Rn_err_eq12} and \eqref{thm_Rn_err_eq4}, we have the desired result. 
\end{proof}

\begin{theorem}
\label{thm_Rn_err}
There exists a constant $C$ such that
\begin{equation}
\label{thm_Rn_err_eq0}
\| \mathcal{R}^n_h v_{h} - v_{h} \|_{L^2(\Omega)} \le Ch^2 \| \mathcal{L}^{n-1}_h v_h \|_{L^2(\Omega)} + C \tau^{1/2} h \vertiii{ v_h }_{h} , ~~~~ \forall v_h\in S^{n-1}_h, ~~ n=1,...,N.
\end{equation}
\end{theorem}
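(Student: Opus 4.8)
The plan is to run an Aubin--Nitsche duality argument that converts the energy-norm bound of Theorem \ref{thm_Rn_h_err} into an $L^2$-bound, thereby gaining the extra power of $h$. Set $w := \mathcal{R}^n_h v_h - v_h \in W_h$ and assume $w\neq 0$. I would first introduce the dual problem: let $z\in\widetilde{H}^2_0(\Omega)$ solve the elliptic interface problem $-\nabla\cdot(\beta\nabla z) = w$ posed with the interface $\Gamma(t_n)$ together with the homogeneous jump conditions on $\Gamma(t_n)$. The regularity theory for elliptic interface problems \cite{1998ChenZou,2002HuangZou} then gives $\|z\|_{H^2(\Omega)}\le C\|w\|_{L^2(\Omega)}$.

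The heart of the proof is the identity $\|w\|^2_{L^2(\Omega)} = a_h(z,w)$. It is obtained by element-wise integration by parts, exactly as in \eqref{thm_Rn_err_eq6_2}--\eqref{thm_Rn_err_eq6_3}: on each element cut by $\Gamma(t_n)$ one splits along $\Gamma(t_n)$, and the interface contributions cancel because $z$ satisfies the two jump conditions on $\Gamma(t_n)$ while $w\in H^1(T)$ has a single-valued trace there; the surviving edge integrals collapse to $-\sum_{e\in\mathring{\mathcal{E}}_h}\int_e\{\beta\nabla z\cdot\bfn\}_e[w]_e$, and the penalty terms of $a_h$ that involve $[z]_e$ drop out since $z\in H^1(\Omega)$ is globally continuous. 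With this identity in place I would invoke Galerkin orthogonality: from the definition of $\mathcal{R}^n_h$ we have $a_h(w,\chi)=0$ for every $\chi\in S^n_h$, so taking $\chi=\mathcal{I}^n_h z\in S^n_h$ and using the symmetry of $a_h$ yields $a_h(z,w)=a_h(z-\mathcal{I}^n_h z,w)$.

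It then remains to assemble three estimates already available. The continuity of $a_h$ on $W_h$, inequality \eqref{thm_coer_eq02} of Theorem \ref{thm_coer}, gives $a_h(z-\mathcal{I}^n_h z,w)\le\kappa_2\vertiii{z-\mathcal{I}^n_h z}_h\vertiii{w}_h$; the energy-norm interpolation estimate of Theorem \ref{thm_appro_energy}, applied with the interface $\Gamma(t_n)$, gives $\vertiii{z-\mathcal{I}^n_h z}_h\le Ch\|z\|_{H^2(\Omega)}\le Ch\|w\|_{L^2(\Omega)}$; and Theorem \ref{thm_Rn_h_err} gives $\vertiii{w}_h\le Ch\|\mathcal{L}^{n-1}_h v_h\|_{L^2(\Omega)}+C\tau^{1/2}\vertiii{v_h}_h$. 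Chaining these and dividing through by $\|w\|_{L^2(\Omega)}$ produces \eqref{thm_Rn_err_eq0}. The only step demanding genuine care is the integration-by-parts identity of the second paragraph, where one must simultaneously keep track of the true interface $\Gamma(t_n)$ — across which $z$ is merely piecewise $H^2$ but has continuous flux — and the discrete interfaces $\Gamma_h(t_{n-1})$, $\Gamma_h(t_n)$ carried by $w=\mathcal{R}^n_h v_h-v_h$, which lives only in the broken space $W_h$; since this is the same computation already used in the proof of Theorem \ref{thm_Rn_h_err}, I would point back to it rather than repeat it in detail.
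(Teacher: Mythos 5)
Your proposal is correct and follows essentially the same route as the paper: the paper's proof of Theorem \ref{thm_Rn_err} is exactly the duality argument you describe (a dual problem with interface $\Gamma(t_n)$, the identity obtained by elementwise integration by parts, Galerkin orthogonality of $\mathcal{R}^n_h v_h - v_h$ against $\mathcal{I}^n_h z \in S^n_h$, continuity \eqref{thm_coer_eq02}, Theorem \ref{thm_appro_energy} and elliptic regularity), yielding $\| \mathcal{R}^n_h v_h - v_h \|_{L^2(\Omega)} \le Ch \vertiii{ \mathcal{R}^n_h v_h - v_h }_h$, after which Theorem \ref{thm_Rn_h_err} is applied, just as you do; the paper merely compresses these steps by pointing back to \eqref{thm_Rn_err_eq6_2}--\eqref{thm_Rn_err_eq6_5}. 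Your write-up in fact supplies the details (in particular the treatment of the interface and edge terms in the integration by parts) that the paper leaves implicit.
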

\begin{proof}
By the duality argument similar to \eqref{thm_Rn_err_eq6_2}-\eqref{thm_Rn_err_eq6_5} above we can show
\begin{equation}
\begin{split}
\label{thm_Rn_err_eq3_1}
\| \mathcal{R}^n_h v_{h} - v_{h} \|_{L^2(\Omega)} & \le C h \vertiii{ \mathcal{R}^n_h v_{h} - v_{h} }_h 
\end{split}
\end{equation}
which gives the desired result by Theorem \ref{thm_Rn_h_err}.
%to define an auxiliary function $z\in \widetilde{H}^2_0(\Omega)$ with the interface $\Gamma(t_n)$ satisfying the equation
%\begin{equation}
%\label{thm_Rn_err_eq1}
%\nabla\cdot(\beta \nabla z) = \mathcal{R}^n_h v_{h} - v_{h}.
%\end{equation}
%Multiplying \eqref{thm_Rn_err_eq1} by $\mathcal{R}^n_h v_{h} - v_{h}$ and noting the penalties of $a_h$ are added on every edge, we can use the orthogonality to obtain
%\begin{equation}
%\label{thm_Rn_err_eq2}
%\| \mathcal{R}^n_h v_{h} - v_{h} \|^2_{L^2(\Omega)} = a_h( z , \mathcal{R}^n_h v_{h} - v_{h} ) = a_h( z - I_h(t_n)z , \mathcal{R}^n_h v_{h} - v_{h} ) \le \vertiii{ z - I_h(t_n)z }_h \vertiii{ \mathcal{R}^n_h v_{h} - v_{h} }_h
%\end{equation}
%where $I_h(t_n)$ is the interpolation to $S_h(t_n)$ for the interface $\Gamma(t_n)$. Then Theorem \ref{} and regularity of elliptic interface problem \cite{1998ChenZou,2002HuangZou} yield
%\begin{equation}
%\label{thm_Rn_err_eq3}
%\vertiii{ z - I_h(t_n)z }_h \le C h \| z \|_{H^2(\Omega)} \le Ch \| R^n_h v_{h} - v_{h} \|_{L^2(\Omega)}.
%\end{equation}
%Putting \eqref{thm_Rn_err_eq3} into \eqref{thm_Rn_err_eq2} and using Theorem \ref{thm_weak_coer}, we have
%\begin{equation}
%\begin{split}
%\label{thm_Rn_err_eq3_1}
%\| \mathcal{R}^n_h v_{h} - v_{h} \|_{L^2(\Omega)} & \le C h \vertiii{ \mathcal{R}^n_h v_{h} - v_{h} }_h 
%\end{split}
%\end{equation}
%which gives the desired result by Theorem \ref{thm_Rn_h_err}.
\end{proof}

\begin{remark}
\label{rem_Rn_err}
Theorem \ref{thm_Rn_err} can be understood as a generalized estimate of (2.12) in \cite{1991ErikssonJohnsonI} that a standard continuous Galerkin method is used and the bilinear form is simply the standard $H^1$ inner product. A similar result is also derived in \cite{2013Zunino} for the $H^1$ inner product with discontinuous coefficients. But as the major difference/difficulty, the bilinear form $a_h(\cdot,\cdot)$ used in the IFE method may not be coercive on the general broken space $W_h$ which is the essential reason of the extra term $C\tau^{1/2}h\vertiii{v_h}_h$ appearing in \eqref{thm_Rn_err_eq0}. This feature also makes the proof much more technical.
\end{remark}

%%%%%%%%%%%%%%%%%%%%%%%%%%%%%%%%%%%%%%%%%%%%%%%%%%%%%%%%%%%%
%%%%%%%%%%%%%%%%%%%%%%%%%%%%%%%%%%%%%%%%%%%%%%%%%%%%%%%%%%%%
%%%%%%%%%%%%%%%%%%%%%%%%%%%%%%%%%%%%%%%%%%%%%%%%%%%%%%%%%%%%
%%%%%%%%%%%%%%%%%%%%%%%%%%%%%%%%%%%%%%%%%%%%%%%%%%%%%%%%%%%%
%%%%%%%%%%%%%%%%%%%%%%%%%%%%%%%%%%%%%%%%%%%%%%%%%%%%%%%%%%%%

\section{Error Estimates}
\label{sec:error_est}

In this section, we proceed to estimate the errors of the fully discrete IFE scheme. For simplicity we shall assume $f=0$. We begin with the following stability results.

\begin{theorem}[Stability]
\label{thm_stab}
Given each initial condition $U^0_h$, let $U^n_h$, $n=1,2.,...,N$ be the solutions to the scheme \eqref{time_IFE_4} or \eqref{time_IFE_5}, then there exists a constant such that for any integer $M\le N$
\begin{subequations}
\label{thm_stab_eq0}
\begin{align}
    & \| U^M_h \|^2_{L^2(\Omega)} + 2\tau \kappa_1 \sum_{n=1}^M \vertiii{ U^n_h }^2_h + \sum_{n=1}^M \| [[ U_h ]]_{n-1} \|^2_{L^2(\Omega)} \le  \| U^0_h \|^2_{L^2(\Omega)},  \label{thm_stab_eq01} \\
    & t_M \| U^M_h \|^2_{L^2(\Omega)} + 2 \tau \kappa_1 \sum_{n=1}^M t_n \vertiii{ U^n_h }^2_h + \sum_{n=1}^M t_{n} \| [[ U_h ]]_{n-1} \|^2_{L^2(\Omega)} \le C\| U^0_h \|^2_{L^2(\Omega)},  \label{thm_stab_eq03}  
\end{align}
and if $h^2 \le \gamma \tau$ for some positive constant $\gamma$ small enough, then there exists a constant $C$ such that for any $M\le N$
\begin{align}
    & t_M  \vertiii{ U^M_h }^2_h + \tau \sum_{n=1}^M t_n \| \mathcal{L}^{n}_h U^n_h \|^2_{L^2(\Omega)} +  \tau^{-1} \sum_{n=1}^{M-1} t_n \| [[ U_h ]]_{n} \|^2_{L^2(\Omega)} \le C \| U^0_h \|^2_{L^2(\Omega)},  \label{thm_stab_eq02}
    \end{align}
\end{subequations}
\end{theorem}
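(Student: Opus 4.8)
\emph{Proof plan.} All three estimates will be obtained by the energy method, each bootstrapping on the previous. Write $b_n:=a_h(U^n_h,U^n_h)$, so that $\kappa_1\vertiii{v_h}_h^2\le a_h(v_h,v_h)\le\kappa_2\vertiii{v_h}_h^2$ for $v_h\in S^n_h$ by Theorem~\ref{thm_coer}, while on the broken space $W_h$ only the upper bound \eqref{thm_coer_eq02} holds. The object that causes all the difficulty is the temporal jump $[[U_h]]_{n-1}=U^n_h-U^{n-1}_h=\tau\,\delta_t U^n_h$: it lies in $W_h$ but, because $S^{n-1}_h\neq S^n_h$, in no single local IFE space, so $a_h([[U_h]]_{n-1},[[U_h]]_{n-1})$ has no definite sign. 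For \eqref{thm_stab_eq01} I would take $V^n_h=U^n_h$ in \eqref{time_IFE_4}, use the polarization identity $2(a-c,a)_{L^2(\Omega)}=\|a\|_{L^2(\Omega)}^2-\|c\|_{L^2(\Omega)}^2+\|a-c\|_{L^2(\Omega)}^2$ on the first term together with the coercivity \eqref{thm_coer_eq01} on $a_h(U^n_h,U^n_h)$, and sum the resulting per-step inequality over $n$, which telescopes. For \eqref{thm_stab_eq03} I would multiply that per-step inequality by $t_n$ and sum, using the discrete Leibniz rule $t_n(x_n-x_{n-1})=(t_nx_n-t_{n-1}x_{n-1})-\tau x_{n-1}$; the telescoped sum has no $n=0$ endpoint since $t_0=0$, and the residual $\tau\sum_{n=0}^{M-1}\|U^n_h\|_{L^2(\Omega)}^2\le T\|U^0_h\|_{L^2(\Omega)}^2$ by \eqref{thm_stab_eq01}.

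The third estimate \eqref{thm_stab_eq02} is the substantive one. Since $\delta_t U^n_h\notin S^n_h$ it is not admissible as a test function, so I would test \eqref{time_IFE_4} with $V^n_h=\mathcal L^n_h U^n_h\in S^n_h$ instead, which by \eqref{discre_proj} turns the bilinear-form term into $\tau\|\mathcal L^n_h U^n_h\|_{L^2(\Omega)}^2$; in the mass term I would re-route $U^{n-1}_h$ through the elliptic projection via $(U^{n-1}_h,\mathcal L^n_h U^n_h)_{L^2(\Omega)}=(\mathcal R^n_hU^{n-1}_h,\mathcal L^n_h U^n_h)_{L^2(\Omega)}+((\mathcal I-\mathcal R^n_h)U^{n-1}_h,\mathcal L^n_h U^n_h)_{L^2(\Omega)}$ and $a_h(U^n_h,\mathcal R^n_hU^{n-1}_h)=a_h(U^n_h,U^{n-1}_h)$ (from \eqref{ellip_proj}, \eqref{discre_proj} and symmetry). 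This gives the per-step identity
\[
\tau\|\mathcal L^n_h U^n_h\|_{L^2(\Omega)}^2+\tfrac12(b_n-b_{n-1})
=-\tfrac12\,a_h\big([[U_h]]_{n-1},[[U_h]]_{n-1}\big)+\big(\mathcal L^n_h U^n_h,(\mathcal I-\mathcal R^n_h)U^{n-1}_h\big)_{L^2(\Omega)}.
\]
The first right-hand term is controlled by the weak coercivity Theorem~\ref{thm_weak_coer} applied to $[[U_h]]_{n-1}$, followed by the IFE trace inequality \eqref{trace_inequa} applied edge-by-edge to $U^n_h|_T\in S_{h,T}(t_n)$ and $U^{n-1}_h|_T\in S_{h,T}(t_{n-1})$; this yields a bound $C\tau(\vertiii{U^n_h}_h^2+\vertiii{U^{n-1}_h}_h^2)$. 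The second term, by Cauchy--Schwarz and Theorem~\ref{thm_Rn_err}, is at most $\|\mathcal L^n_h U^n_h\|_{L^2(\Omega)}\big(Ch^2\|\mathcal L^{n-1}_h U^{n-1}_h\|_{L^2(\Omega)}+C\tau^{1/2}h\vertiii{U^{n-1}_h}_h\big)$, and a $\tau$-weighted Young's inequality splits it into an absorbable $\tfrac\tau4\|\mathcal L^n_h U^n_h\|_{L^2(\Omega)}^2$ plus $C(h^4/\tau)\|\mathcal L^{n-1}_h U^{n-1}_h\|_{L^2(\Omega)}^2+Ch^2\vertiii{U^{n-1}_h}_h^2$.

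I would then multiply the per-step identity by $t_{n-1}$ (deliberately not by $t_n$) and sum over $n$: the $n=1$ contribution drops because $t_0=0$, the discrete Leibniz rule telescopes the $b_n-b_{n-1}$ sum leaving the residual $\tau\sum_{m=1}^{M-1}b_m\le C\|U^0_h\|_{L^2(\Omega)}^2$ by \eqref{thm_stab_eq01}, so that --- crucially --- no energy norm of the initial datum enters. The error terms collected above are then tamed by the mesh condition $h^2\le\gamma\tau$ with $\gamma$ small: it converts $h^4/\tau$ into $\gamma^2\tau$ and $h^2$ into $\gamma\tau$, so these contributions are either absorbed into $\tau\sum t_{n-1}\|\mathcal L^n_h U^n_h\|_{L^2(\Omega)}^2$ on the left or, after an index shift, bounded by $C\|U^0_h\|_{L^2(\Omega)}^2$ through \eqref{thm_stab_eq01} and \eqref{thm_stab_eq03} (using in particular $\tau\sum\|[[U_h]]_{n-1}\|_{L^2(\Omega)}^2\le\|U^0_h\|_{L^2(\Omega)}^2$). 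Since $t_n\le2t_{n-1}$ for $n\ge2$ and, from the $n=1$ equation alone, $\tau t_1\|\mathcal L^1_h U^1_h\|_{L^2(\Omega)}^2=\|P^1_h[[U_h]]_0\|_{L^2(\Omega)}^2\le\|U^0_h\|_{L^2(\Omega)}^2$ with $P^1_h$ the $L^2(\Omega)$-projection onto $S^1_h$, this produces the $t_M\vertiii{U^M_h}_h^2$ and $\tau\sum t_n\|\mathcal L^n_h U^n_h\|_{L^2(\Omega)}^2$ parts of \eqref{thm_stab_eq02}. For the last part I would use $[[U_h]]_n=\tau\,\delta_t U^{n+1}_h$ and the scheme relation $P^{n+1}_h\delta_t U^{n+1}_h=-\mathcal L^{n+1}_h U^{n+1}_h$, giving $\|\delta_t U^{n+1}_h\|_{L^2(\Omega)}^2=\|\mathcal L^{n+1}_h U^{n+1}_h\|_{L^2(\Omega)}^2+\tau^{-2}\|(\mathcal I-P^{n+1}_h)U^n_h\|_{L^2(\Omega)}^2$; the last term is at most $\tau^{-2}\|(\mathcal I-\mathcal R^{n+1}_h)U^n_h\|_{L^2(\Omega)}^2$, which Theorem~\ref{thm_Rn_err} and $h^2\le\gamma\tau$ make negligible, so $\tau^{-1}\sum t_n\|[[U_h]]_n\|_{L^2(\Omega)}^2$ reduces to quantities already controlled.

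The hard part is entirely in the third estimate, and it is precisely the non-coercivity of $a_h(\cdot,\cdot)$ on $W_h$: the term $a_h([[U_h]]_{n-1},[[U_h]]_{n-1})$, which in the classical fixed-space backward-Euler analysis is nonnegative and simply discarded, must here be absorbed via the weak coercivity of Theorem~\ref{thm_weak_coer} and the IFE trace/inverse inequalities --- and this is exactly what forces the parabolic-type condition $h^2\le\gamma\tau$. Independently, the $\mathcal O(\tau)$ variation of the spaces $S^{n-1}_h\to S^n_h$ makes it necessary to transfer $U^{n-1}_h$ into $S^n_h$ through $\mathcal R^n_h$ and to invoke the sharp mismatch estimates of Theorems~\ref{thm_Rn_h_err}--\ref{thm_Rn_err}, into which most of the delicacy has already been packed.
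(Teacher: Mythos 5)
Your proof is correct and follows essentially the same strategy as the paper: test with $U^n_h$ for \eqref{thm_stab_eq01}, weight by the discrete time and telescope for \eqref{thm_stab_eq03}, and for \eqref{thm_stab_eq02} test with $\mathcal{L}^n_hU^n_h$, transfer $U^{n-1}_h$ into $S^n_h$ through $\mathcal{R}^n_h$, and invoke Theorems \ref{thm_Rn_h_err}--\ref{thm_Rn_err} together with $h^2\le\gamma\tau$. The genuine deviations are local and both check out. (i) Where the paper subtracts $(\mathcal{R}^n_hU^{n-1}_h,\mathcal{L}^n_hU^n_h)_{L^2(\Omega)}$, keeps the nonnegative term $\vertiii{U^n_h-\mathcal{R}^n_hU^{n-1}_h}^2_a$, and bounds $\vertiii{\mathcal{R}^n_hU^{n-1}_h}^2_a-\vertiii{U^{n-1}_h}^2_a$ by Galerkin orthogonality plus Theorem \ref{thm_Rn_h_err}, you polarize $a_h(U^{n-1}_h,U^n_h)$ and bound the resulting term $-\tfrac12 a_h([[U_h]]_{n-1},[[U_h]]_{n-1})$ directly by the weak coercivity of Theorem \ref{thm_weak_coer} combined with the trace inequality \eqref{trace_inequa}, giving $C\tau\big(\vertiii{U^n_h}^2_h+\vertiii{U^{n-1}_h}^2_h\big)$, which is summable against the time weights by \eqref{thm_stab_eq01}--\eqref{thm_stab_eq03}; this is the same mechanism the paper uses for the $\chi_2$ term inside Theorem \ref{thm_Rn_h_err}, so nothing new is needed. (ii) For the temporal jump term you use $\mathcal{P}^{n+1}_h\delta_tU^{n+1}_h=-\mathcal{L}^{n+1}_hU^{n+1}_h$ (with $f=0$) and a Pythagoras split of $\|\delta_tU^{n+1}_h\|_{L^2(\Omega)}$, while the paper takes $V^n_h=\mathcal{P}^n_h[[U_h]]_{n-1}$ and applies Young's inequality; both rest on the minimality of the $L^2$ projection and Theorem \ref{thm_Rn_err}, so they are interchangeable. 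Your $t_{n-1}$-weighting (instead of the paper's $t_n$ with $t_n\le 2t_{n-1}$), the $n=1$ repair via $\tau\|\mathcal{L}^1_hU^1_h\|_{L^2(\Omega)}=\|\mathcal{P}^1_h[[U_h]]_0\|_{L^2(\Omega)}\le\|U^0_h\|_{L^2(\Omega)}$, and covering the $M=1$ case of $t_M\vertiii{U^M_h}^2_h$ by \eqref{thm_stab_eq01} are all sound, so the argument is complete up to routine constants.
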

\begin{proof}
We prove each of the inequalities above individually.

\vspace{0.1in}

\textit{Proof of \eqref{thm_stab_eq01}}. In \eqref{time_IFE_5}, taking $V_h=U_h$, namely in \eqref{time_IFE_4} setting $V^n_h=U^n_h$, $n=1,2,...,M$, and summing the equalities together, we have
\begin{equation}
\begin{split}
\label{thm_stab_eq1}
& \tau \sum_{n=1}^M a_h(U^n_h,U^n_h) + \sum_{n=1}^M  ( [[U_h]]_{n-1}, U^n_h )_{\Omega} \\
= & \tau \sum_{n=1}^M a_h(U^n_h,U^n_h) + \frac{1}{2} \| U^M_h \|^2_{L^2(\Omega)} +  \frac{1}{2} \sum_{n=1}^M  \| [[ U_h ]]_{n-1} \|^2_{L^2(\Omega)} -  \frac{1}{2} \| U^0_h \|^2_{L^2(\Omega)} = 0
\end{split}
\end{equation}
where we have used the identity $2( [[U_h]]_{n-1}, U^n_h )_{\Omega} = \| U^n_h \|^2_{L^2(\Omega)} +  \| [[ U ]]_{n-1} \|^2_{L^2(\Omega)} - \| U^{n-1}_h \|^2_{L^2(\Omega)}$. Then the coercivity \eqref{thm_coer_eq01} finishes the proof.

\vspace{0.1in}

\textit{Proof of \eqref{thm_stab_eq03}}. The argument is similar. In \eqref{time_IFE_4} setting $V^n_h=U^n_h$, multiplying it by $t_n$ and summing from $n=1,2,...,M$, we have
\begin{equation}
\label{thm_stab_eq1_1}
\tau \sum_{n=1}^M t_n a_h(U^n_h, U^n_h) + \frac{1}{2} \sum_{n=1}^M t_n \| [[ U_h ]]_{n-1} \|^2_{L^2(\Omega)}  + \frac{1}{2} t_M \| U^M_h \|^2_{L^2(\Omega)} = \tau \sum_{n=1}^{M} \| U^{n-1}_h \|^2_{L^2(\Omega)}.
\end{equation}
Then applying the bound of the second term in \eqref{thm_stab_eq01} together with the discrete Poincar\'e inequality in Theorem \ref{thm_dis_poinc} and the coercivity \eqref{thm_coer_eq01} we have the desired result.

\vspace{0.2in}

\textit{Proof of \eqref{thm_stab_eq02}}. First of all we observe the following identities
\begin{subequations}
\label{thm_stab_eq3}
\begin{align}
    & a_h(U^n_h, \mathcal{L}^n_hU^n_h) = ( \mathcal{L}^n_hU^n_h,  \mathcal{L}^n_h U^n_h)_{L^2(\Omega)} = \|  \mathcal{L}^n_h U^n_h \|^2_{L^2(\Omega)}, \label{thm_stab_eq3_1} \\
    &  (U^n_h, \mathcal{L}^n_h U^n_h )_{L^2(\Omega)} = a_h( U^n_h, U^n_h ) = \vertiii{ U^n_h }^2_a, \label{thm_stab_eq3_2} \\
    &  ( \mathcal{R}^n_hU^{n-1}_h, \mathcal{L}^n_h U^n_h )_{L^2(\Omega)} = a_h( \mathcal{R}^n_hU^{n-1}_h, U^n_h ).  \label{thm_stab_eq3_3} 
\end{align}
\end{subequations}
Note that $\mathcal{R}^n_hU^{n-1}_h$ and $U^n_h$ are both in $S^n_h$, we have the following identity
\begin{equation}
\label{thm_stab_eq4}
a_h( \mathcal{R}^n_hU^{n-1}_h, U^n_h ) = \frac{1}{2} \vertiii{ U^n_h }^2_a +  \frac{1}{2} \vertiii{ \mathcal{R}^n_h U^{n-1}_h }^2_a - \frac{1}{2} \vertiii{ U^n_h -  \mathcal{R}^n_h U^{n-1}_h }^2_a.
\end{equation}
Setting $V^n_h = \mathcal{L}^n_h U^n_h$ in \eqref{time_IFE_4}, we then rewrite
\begin{equation}
\label{thm_stab_eq2}
\tau a_h(U^n_h, \mathcal{L}^n_h U^n_h) + (U^n_h, \mathcal{L}^n_h U^n_h)_{L^2(\Omega)} = (U^{n-1}_h, \mathcal{L}^n_h U^n_h)_{L^2(\Omega)} .
\end{equation}
Subtracting the term $( \mathcal{R}^n_hU^{n-1}_h, \mathcal{L}^n_h U^n_h )_{L^2(\Omega)}$ from \eqref{thm_stab_eq2}, and using \eqref{thm_stab_eq3} and \eqref{thm_stab_eq4}, we arrive at the identity
\begin{equation}
\begin{split}
\label{thm_stab_eq5}
& \frac{1}{2} \vertiii{ U^n_h }^2_a - \frac{1}{2} \vertiii{ U^{n-1}_h }^2_a + \tau \|  \mathcal{L}^n_h U^n_h \|^2_{L^2(\Omega)} + \frac{1}{2} \vertiii{ U^n_h -  \mathcal{R}^n_h U^{n-1}_h }^2_a \\
 = & ( (\mathcal{I} - \mathcal{R}^n_h) U^{n-1}_h, \mathcal{L}^n_h U^n_h)_{L^2(\Omega)} +  \frac{1}{2} \vertiii{ \mathcal{R}^n_h U^{n-1}_h }^2_a - \frac{1}{2} \vertiii{ U^{n-1}_h }^2_a .
\end{split}
\end{equation} 
We need to estimate each term in the right hand side of \eqref{thm_stab_eq5}. By orthogonality, boundedness \eqref{thm_coer_eq02}, the estimate in Theorem \eqref{thm_Rn_h_err} we have
\begin{equation}
\begin{split}
\label{thm_stab_eq6}
\vertiii{ \mathcal{R}^n_h U^{n-1}_h }^2_a - \vertiii{ U^{n-1}_h }^2_a &=  -a_h ( \mathcal{R}^n_h U^{n-1}_h - U^{n-1}_h, \mathcal{R}^n_h U^{n-1}_h - U^{n-1}_h )  \le \vertiii{ \mathcal{R}^n_h U^{n-1}_h - U^{n-1}_h }^2_h \\
& \le Ch^2  \| \mathcal{L}^{n-1}_h U^{n-1}_h \|^2_{L^2(\Omega)} + C \tau \vertiii{ U^{n-1}_h }^2_{h}  \\
& \le C \gamma \tau  \| \mathcal{L}^{n-1}_h U^{n-1}_h \|^2_{L^2(\Omega)} + C \tau \vertiii{ U^{n-1}_h }^2_{h} .
\end{split}
\end{equation}
In addition, Theorem \ref{thm_Rn_err} and Young's inequality imply
\begin{equation}
\begin{split}
\label{thm_stab_eq7}
( (\mathcal{I} - \mathcal{R}^n_h) U^{n-1}_h, \mathcal{L}^n_h U^n_h )_{L^2(\Omega)} & \le \| (\mathcal{I} - \mathcal{R}^n_h) U^{n-1}_h \|_{L^2(\Omega)} \| \mathcal{L}^n_h U^n_h \|_{L^2(\Omega)} \\
& \le \left( Ch^2 \| \mathcal{L}^{n-1}_h U^{n-1}_h \|_{L^2(\Omega)}  + C \tau^{1/2} h \vertiii{ U^{n-1}_h }_{h} \right) \| \mathcal{L}^n_h U^n_h \|_{L^2(\Omega)} \\
& \le C h^2 \| \mathcal{L}^{n-1}_h U^{n-1}_h \|^2_{L^2(\Omega)} + C h^2 \| \mathcal{L}^{n}_h U^{n}_h \|^2_{L^2(\Omega)} + C  \tau \vertiii{ U^{n-1}_h }^2_{h} \\
& \le C \gamma \tau \| \mathcal{L}^{n-1}_h U^{n-1}_h \|^2_{L^2(\Omega)} + C \gamma \tau \| \mathcal{L}^{n}_h U^{n}_h \|^2_{L^2(\Omega)} + C  \tau \vertiii{ U^{n-1}_h }^2_{h}.
\end{split}
\end{equation}
Putting \eqref{thm_stab_eq6} and \eqref{thm_stab_eq7} into \eqref{thm_stab_eq5}, we obtain
\begin{equation}
\begin{split}
\label{thm_stab_eq8}
 \frac{1}{2} \vertiii{ U^n_h }^2_a - \frac{1}{2} \vertiii{ U^{n-1}_h }^2_a + \tau \|  \mathcal{L}^n_h U^n_h \|^2_{L^2(\Omega)} \le C \gamma \tau \| \mathcal{L}^{n-1}_h U^{n-1}_h \|^2_{L^2(\Omega)} + C \gamma \tau \| \mathcal{L}^{n}_h U^{n}_h \|^2_{L^2(\Omega)} + C \tau \vertiii{U^{n-1}_h}^2_h.
\end{split}
\end{equation}
Note that $t_n=t_{n-1}+\tau \le 2t_{n-1}$ for $n \ge 2$. Then multiplying \eqref{thm_stab_eq8} by $t_n$ and summing it from $n=2$ to $n=M$ yields
%\begin{equation}
%\begin{split}
%\label{thm_stab_eq9}
%& \frac{1}{2} \vertiii{ U^M_h }^2_a +  \tau  \sum_{n=1}^M t_n \|  \mathcal{L}^n_h U^n_h \|^2_{L^2(\Omega)} \\
%  \le & C \gamma \tau \left(  \sum_{n=1}^M t_n \| \mathcal{L}^{n-1}_h U^{n-1}_h \|^2_{L^2(\Omega)} + \sum_{n=1}^M t_n \| \mathcal{L}^{n}_h U^{n}_h \|^2_{L^2(\Omega)} \right) + C \tau \sum_{n=1}^M t_n \vertiii{U^{n-1}_h}^2_h \\
%  \le & 
% \end{split}
%\end{equation}
\begin{equation}
\begin{split}
\label{thm_stab_eq9}
 \frac{t_M}{2} \vertiii{ U^M_h }^2_a + (1-C\gamma)  \tau  \sum_{n=2}^M t_n \|  \mathcal{L}^n_h U^n_h \|^2_{L^2(\Omega)} 
  \le C \gamma \tau^2 \| \mathcal{L}^{1}_h U^{1}_h \|^2_{L^2(\Omega)} + C  \tau \sum_{n=2}^M  \vertiii{U^{n-1}_h}^2_h .
 \end{split}
\end{equation}
Special attention needs for $n=1$. Putting \eqref{thm_stab_eq3_1} and \eqref{thm_stab_eq3_2} into \eqref{thm_stab_eq2} with $n=1$, we have
\begin{equation}
\begin{split}
\label{thm_stab_eq10}
\vertiii{ U^1_h }^2_a + \tau \vertii{ \mathcal{L}^1_h U^1_h }^2_{L^2(\Omega)} & = ( U^0_h, \mathcal{L}^1_h U^1_h )_{L^2(\Omega)} \\
& \le \vertii{ U^0_h }_{L^2(\Omega)} \vertii{ \mathcal{L}^1_h U^1_h }_{L^2(\Omega)} \le \tau^{-1} \vertii{ U^0_h }^2_{L^2(\Omega)} + \frac{\tau}{4}  \vertii{ \mathcal{L}^1_h U^1_h }^2_{L^2(\Omega)}.
\end{split}
\end{equation}
Multiplying \eqref{thm_stab_eq10} by $t_1=\tau$ leads to
\begin{equation}
\begin{split}
\label{thm_stab_eq10_1}
\tau \vertiii{ U^1_h }^2_a + \frac{3\tau t_1}{4} \vertii{ \mathcal{L}^1_h U^1_h }^2_{L^2(\Omega)}  \le \vertii{ U^0_h }^2_{L^2(\Omega)}.
\end{split}
\end{equation}
Combining \eqref{thm_stab_eq10_1} with \eqref{thm_stab_eq9}, using the stability \eqref{thm_stab_eq01}(the second term), replacing the norm $\vertiii{\cdot}_a$ by $\vertiii{\cdot}_h$ through the equivalence, and assuming $\gamma$ is small enough such that $1-C\gamma>0$ we obtain the bounds for the first two terms on the left side of \eqref{thm_stab_eq02}. For the third term on the left of \eqref{thm_stab_eq02}, we apply the $L^2$ projection $\mathcal{P}^n_h~:~ W_h \rightarrow S^n_h$ to write
\begin{equation}
\label{thm_stab_eq11}
a_h( U^n_h, \mathcal{P}^n_h[[U_h]]_{n-1} ) = ( \mathcal{L}^n_hU^n_h, \mathcal{P}^n_h[[U_h]]_{n-1} )_{L^2(\Omega)} = ( \mathcal{L}^n_hU^n_h, [[U_h]]_{n-1} )_{L^2(\Omega)}.
\end{equation}
Then in \eqref{time_IFE_4}, taking $V^n_h = \mathcal{P}^n_h [[U_h]]_{n-1}$, and using \eqref{thm_stab_eq11} we have
\begin{equation}
\begin{split}
\label{thm_stab_eq12}
\vertii{ [[ U_h ]]_{n-1} }^2_{L^2(\Omega)} & = ( [[ U_h ]]_{n-1}, [[ U_h ]]_{n-1} - \mathcal{P}^n_h[[ U_h ]]_{n-1} )_{L^2(\Omega)}
 +  ( [[ U_h ]]_{n-1},  \mathcal{P}^n_h[[ U_h ]]_{n-1} )_{L^2(\Omega)} \\
 & = ( [[ U_h ]]_{n-1}, [[ U_h ]]_{n-1} - \mathcal{P}^n_h[[ U_h ]]_{n-1} )_{L^2(\Omega)} - \tau ( \mathcal{L}^n_hU^n_h, [[U_h]]_{n-1} )_{L^2(\Omega)}.
 \end{split}
 \end{equation}
 The first term on the right of \eqref{thm_stab_eq12} can be bounded through the Young's inequality and Theorem \ref{thm_Rn_err}:
 \begin{equation}
 \begin{split}
\label{thm_stab_eq13}
&( [[ U_h ]]_{n-1}, [[ U_h ]]_{n-1} - \mathcal{P}^n_h[[ U_h ]]_{n-1} )_{L^2(\Omega)} = ( [[ U_h ]]_{n-1}, \mathcal{P}^n_h U^{n-1}_h - U^{n-1}_{h}  )_{L^2(\Omega)} \\
 \le & \frac{1}{4} \| [[ U_h ]]_{n-1} \|^2_{L^2(\Omega)} + \| (\mathcal{I} - \mathcal{P}^n_h)U^{n-1}_h \|^2_{L^2(\Omega)} \\
 \le & \frac{1}{4} \| [[ U_h ]]_{n-1} \|^2_{L^2(\Omega)} + Ch^4 \| \mathcal{L}^{n-1}_h U^{n-1}_h \|^2_{L^2(\Omega)} + C\tau h^2 \vertiii{ U^{n-1}_h }^2_h
\end{split}
\end{equation}
where we have also used the smallest distance property $\| (\mathcal{I} - \mathcal{P}^n_h)U^{n-1}_h \|^2_{L^2(
\Omega)} \le \| (\mathcal{I} - \mathcal{R}^n_h)U^{n-1}_h \|^2_{L^2(\Omega)}$. The second term on the right of \eqref{thm_stab_eq12} is also bounded through the Young's inequality:
 \begin{equation}
\label{thm_stab_eq14}
\tau ( \mathcal{L}^n_hU^n_h, [[U_h]]_{n-1} )_{L^2(\Omega)} \le \frac{1}{4} \| [[U_h]]_{n-1} \|^2_{L^2(\Omega)} + \tau^2 \| \mathcal{L}^n_hU^n_h \|^2_{L^2(\Omega)}.
\end{equation}
Substituting \eqref{thm_stab_eq13} and \eqref{thm_stab_eq14} into \eqref{thm_stab_eq12} together with the assumption $h^2\le \gamma \tau$, we obtain
\begin{equation}
\label{thm_stab_eq15}
\tau^{-1} \vertii{ [[ U_h ]]_{n-1} }^2_{L^2(\Omega)} \le C \tau \left( \| \mathcal{L}^{n-1}_h U^{n-1}_h \|^2_{L^2(\Omega)} +  \| \mathcal{L}^{n}_h U^{n}_h \|^2_{L^2(\Omega)} + \vertiii{ U^{n-1}_h }^2_h \right).
\end{equation}
Now we multiply \eqref{thm_stab_eq15} by $t_{n-1}$ and note $t_{n-1}\le t_n$. Then summing the resulted inequalities from $n=2$ to $M$ and using the bounds for second terms in \eqref{thm_stab_eq03} and \eqref{thm_stab_eq02}, we arrive at the estimate for the third term in \eqref{thm_stab_eq02}.
\end{proof}

\begin{remark}
\label{rem_stability_1}
We note that one of the keys in the proof of \eqref{thm_stab_eq02} is the employment of the norm $\vertiii{ \cdot }_a$ induced from $a_h(\cdot,\cdot)$ in the identity \eqref{thm_stab_eq5}. Roughly speaking if it is replaced by the energy norm $\vertiii{\cdot}_h$, then the coercivity and boundedness in Theorem \ref{thm_coer} have to be used to bound $a_h(U^n_h,U^n_h)$ and $a_h(U^{n-1}_h, U^{n-1}_h)$ which can not give the same coefficients for $\vertiii{ U^n_h }_h$ and $\vertiii{ U^{n-1}_h }_h$ as \eqref{thm_stab_eq5}, and thus one can not do cancellation when summing these identities as \eqref{thm_stab_eq9}. Moreover, the estimate \eqref{thm_stab_eq6} relies on the orthogonality property of $a_h(\cdot,\cdot)$, and one order will be lost if $\vertiii{\cdot}_a$ is replaced by $\vertiii{\cdot}_h$ in that estimate. So we think the uniform degrees of freedom and weak form in dynamics does not only benefit computation but also analysis.
\end{remark}

\begin{remark}
\label{rem_stability_2}
Similar stability results are also derived in \cite{2013Zunino} to analyze XFEM for moving interface problems. However their approach relies on certain assumptions on the interpolation errors between the extended finite element spaces and the standard continuous finite element spaces, see (11)-(15) in \cite{2013Zunino}. As mentioned in the article, the rigorous proof of those assumptions is still an open problem which we think are also difficult to prove even for the IFE spaces.
%Using that framework for IFE methods, we need to show
%\begin{equation}
%\label{rem_stability_2_eq0}
%\| v_h - \mathcal{I}_h(t) v_h \|_{L^2(\Omega)} \le Ch^{-1/2} \sum_{e\in\mathcal{E}^i_h(t)} \| [v_h] \|_{L^2(e)}
%\end{equation}
%which, we believe, is unlikely to be true in general. 
\end{remark}

Next we proceed to estimate the fully discrete errors. Given the IFE solution $U_h$ to the scheme \eqref{time_IFE_5} or \eqref{time_IFE_4}, we define the total error 
\begin{equation}
\label{error}
E_h := u - U_h \in \mathbb{W}_h.%~~~~ \text{with} ~~ E^n_h = u(t^-_n) - U^n_h = \xi^n_h + \eta^n_h
\end{equation}
%where the split errors $\xi^n_h$ and $\eta^n_h$ are given in \eqref{err_decomp}. 
We follow the argument of \cite{2013Zunino} to show the following estimate on the consistency error.
\begin{theorem}
\label{thm_consist}
Suppose the exact solution has the regularity $u\in L^2(0,T;\widetilde{H}^2(\Omega))\cap H^1(0,T;H^1(\Omega^-\cup \Omega^+))\cap H^2(0,T;L^2(\Omega))$, let $U_h$ be the IFE solution to \eqref{time_IFE_5} or \eqref{time_IFE_4}, and let $\mathcal{V}\in L^{\infty}(\Gamma(t))$ with $\|\mathcal{V}(t)\cdot\bfn\|_{L^{\infty}(\Gamma(t))}\le K$, $\forall t$, then for any $V_h\in \mathbb{S}_h$ and $\epsilon>0$ there holds
\begin{equation}
\begin{split}
\label{thm_consist_eq0}
A_h( E_h, V_h ) & \le  C \tau^2\epsilon^{-1} \left( \| \partial_{tt}u \|^2_{L^2(0,T;L^2(\Omega))} + K \| \partial_t u \|^2_{L^2(0,T;H^1(\Omega))}  \right) \\
&+ \frac{ \epsilon }{2} \left(  \max_{n=1,...,N} \| V^n_h \|^2_{L^2(\Omega)} + K\tau \sum_{n=1}^N \| V^n_h \|^2_{H^1(\Omega)} \right) + C\epsilon^{-1}h^4 \| u_0 \|^2_{H^2(\Omega)}.
\end{split}
\end{equation}
\end{theorem}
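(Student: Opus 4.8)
The plan is to reduce $A_h(E_h,V_h)=A_h(u,V_h)-F_h(V_h)$ (legitimate for $V_h\in\mathbb S_h$ by \eqref{time_IFE_5}) to an essentially temporal residual and bound it piece by piece. Using \eqref{time_IFE_6}--\eqref{time_IFE_7}, the hypothesis $f\equiv 0$, the continuity in time of the exact solution (so that $[[u]]_{n-1}=0$ for $n\ge 2$ and $u(t_0^+)=u_0$), the fact that every $V_h\in\mathbb S_h$ is constant in time on each $J_n$ with value $V_h^n$, and $U_h(t_0^-)=\mathcal R_h^0u_0$, one obtains
\[
A_h(E_h,V_h)=\sum_{n=1}^N\int_{J_n}(\partial_t u,V_h^n)_{L^2(\Omega)}\,dt+\tau\sum_{n=1}^N a_h(u(t_n),V_h^n)+(u_0-\mathcal R_h^0u_0,V_h^1)_{L^2(\Omega)} .
\]
The last term is disposed of at once: Theorem \ref{thm_ellip_proj_err} gives $\|u_0-\mathcal R_h^0u_0\|_{L^2(\Omega)}\le Ch^2\|u_0\|_{H^2(\Omega)}$, so Young's inequality bounds it by $\frac{\epsilon}{4}\|V_h^1\|_{L^2(\Omega)}^2+C\epsilon^{-1}h^4\|u_0\|_{H^2(\Omega)}^2$, which accounts for the $h^4$-term and part of the $\frac{\epsilon}{2}\max_n\|V_h^n\|_{L^2(\Omega)}^2$ term of \eqref{thm_consist_eq0}.

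For the two sums I would insert the exact variational identity satisfied by $u$. For a.e.\ $t$ we have $u(\cdot,t)\in\widetilde H^2(\Omega)$, and integrating $\int_\Omega\beta\nabla u(t)\cdot\nabla w$ by parts element by element — using that $u(t)$ is globally continuous and has continuous conormal flux across $\Gamma(t)$, so that the penalty and consistency terms of $a_h$ either vanish or cancel against the element-boundary contributions — shows that $(\partial_t u(t),w)_{L^2(\Omega)}+a_h(u(t),w;t)=(f(t),w)_{L^2(\Omega)}=0$ for every $w\in W_h$, where $a_h(\cdot,\cdot;t)$ is the form \eqref{bilinear_form_1} carrying the interface at time $t$. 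Taking $w=V_h^n$ and integrating over $J_n$ gives $\int_{J_n}(\partial_t u,V_h^n)\,dt=-\int_{J_n}a_h(u(t),V_h^n;t)\,dt$, and since $\tau a_h(u(t_n),V_h^n)=\int_{J_n}a_h(u(t_n),V_h^n;t_n)\,dt$ the temporal residual becomes
\[
\sum_{n=1}^N\int_{J_n}\big[a_h(u(t_n),V_h^n;t_n)-a_h(u(t),V_h^n;t)\big]\,dt .
\]

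The integrand encodes two effects. The change of the data $u$ over $[t,t_n]$, after using the identity above, produces the classical backward--Euler truncation term $\int_{J_n}(\partial_t u(t)-\partial_t u(t_n),V_h^n)_{L^2(\Omega)}\,dt=\big(u(t_n)-u(t_{n-1})-\tau\,\partial_t u(t_n),V_h^n\big)_{L^2(\Omega)}$; summing over $n$ and using $u\in H^2(0,T;L^2(\Omega))$ together with Cauchy--Schwarz in space and time and Young's inequality bounds this by $C\epsilon^{-1}\tau^2\|\partial_{tt}u\|_{L^2(0,T;L^2(\Omega))}^2+\frac{\epsilon}{4}\max_n\|V_h^n\|_{L^2(\Omega)}^2$. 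The change of the interface over $J_n$ — i.e.\ the mismatch between the moving $\Gamma(t)$, which carries the transmission conditions that $u(t)$ actually satisfies, and the geometry frozen at $t_n$ underlying the form and the test function $V_h^n\in S_h^n$ — is the genuinely new contribution: by the Reynolds transport theorem \eqref{functional_2} it concentrates on the region swept by $\Gamma$ during $J_n$, whose two-dimensional measure is $O(\tau\|\mathcal V\cdot\bfn\|_{L^\infty(\Gamma)})=O(K\tau)$, and estimating it there with a trace inequality for $u$ — reduced to $\|\partial_t u\|$ via the elliptic regularity $\|u(t)\|_{\widetilde H^2(\Omega)}\le C\|\partial_t u(t)\|_{L^2(\Omega)}$ of the interface problem — and with the IFE inverse and trace inequalities \eqref{inver_inequa}, \eqref{trace_inequa} and Theorem \ref{thm_glob_trace} for $V_h^n$, one arrives at $C\epsilon^{-1}\tau^2K\|\partial_t u\|_{L^2(0,T;H^1(\Omega))}^2+\frac{\epsilon}{2}K\tau\sum_{n=1}^N\|V_h^n\|_{H^1(\Omega)}^2$. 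Collecting the three contributions gives \eqref{thm_consist_eq0}.

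The step I expect to be the main obstacle is this geometric contribution. Since $S_h(t)$ is only piecewise $H^1$ and is frozen at $t_n$ while $\Gamma(t)$ keeps moving, bounding the swept-region terms forces one to localize carefully to an $O(K\tau)$-thin strip and to control traces of IFE functions, and of their piecewise-constant gradients, on curves that cut through mesh elements rather than along mesh edges — precisely the situation handled by the uniform-in-geometry estimates of Section \ref{sec:pre_est} (Lemmas \ref{lem_stability}, \ref{lem_glob_stability}, \ref{thm_interp_err} and Theorem \ref{thm_glob_trace}). Elliptic regularity of the parabolic interface problem must also be invoked there to trade the inadmissible $\|u\|_{\widetilde H^2(\Omega)}$ for the $\|\partial_t u\|$-norms appearing in the statement, and one must keep every $\epsilon$-weighted term of the stated form so that it can later be absorbed by the stability estimates of Theorem \ref{thm_stab}.
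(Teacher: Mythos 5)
Your outline coincides with the paper's own route: the reduction $A_h(E_h,V_h)=A_h(u,V_h)-F_h(V_h)$, the treatment of the initial term via Theorem \ref{thm_ellip_proj_err} plus Young's inequality, the elementwise integration by parts giving $a_h(u(t),w)=-(\beta\triangle u(t),w)_{L^2(\Omega)}$ for $w\in W_h$, and the final three groups of terms obtained from the Reynolds formula \eqref{functional_2} and the trace estimate of Theorem \ref{thm_glob_trace} are exactly the ingredients used there. The genuine gap sits in the one step that carries the whole difficulty: you assert that the residual $\sum_n\int_{J_n}\bigl[a_h(u(t_n),V^n_h)-a_h(u(t),V^n_h)\bigr]dt$ splits into a ``data-change'' backward-Euler truncation term and a separate ``interface-change'' term, but you never produce an identity realizing this split, and as stated it is internally inconsistent. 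Applying your own consistency identity at both $t$ and $t_n$ collapses the entire residual into $\sum_n\int_{J_n}(\partial_t u(t)-\partial_t u(t_n),V^n_h)_{L^2(\Omega)}dt$; if one could then invoke $u\in H^2(0,T;L^2(\Omega))$ in the Bochner sense to write $\partial_t u(t)-\partial_t u(t_n)=-\int_t^{t_n}\partial_{tt}u\,ds$, no $K$-weighted term would ever appear, contradicting your claim that a geometric contribution remains. The point is that for a genuinely moving interface $\partial_t u$ jumps across $\Gamma(t)$ (differentiating $[u]_{\Gamma(t)}=0$ along the motion gives $[\partial_t u]=-[\nabla u]\cdot\mathcal{V}\neq 0$ in general when $\beta^-\neq\beta^+$), so $t\mapsto(\partial_t u(t),V^n_h)_{L^2(\Omega)}$ cannot be differentiated term-by-term using the piecewise $\partial_{tt}u$; the interface flux $\int_{\Gamma(t)}\partial_t u\,V_h\,\mathcal{V}\cdot\bfn\,ds$ is precisely the correction. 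The paper makes this precise by setting $\mathcal{G}(t)=(\beta\triangle u(t),V_h)_{L^2(\Omega)}$, writing the residual as $\tau\sum_n\int_{t_n}^{z_n}\tfrac{d}{dt}\mathcal{G}(t)\,dt$ via the mean value theorem, and computing $\tfrac{d}{dt}\mathcal{G}$ with \eqref{functional_2}, which generates the $\partial_{tt}u$ volume term and the $K$-weighted interface term simultaneously; your ``two effects'' must be derived this way rather than postulated, otherwise the argument either proves too much or double counts.

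Two smaller corrections. First, the bilinear form $a_h(\cdot,\cdot)$ of \eqref{bilinear_form_1} is deliberately geometry-independent (this uniformity is a central point of the paper), so there is no ``form carrying the interface at time $t$''; the time dependence enters only through $u(t)$ and the spaces $S^n_h$, and consequently no localization to an $O(K\tau)$ swept strip is needed --- the geometric effect appears only as the interface integral delivered by \eqref{functional_2}. Second, the detour through an elliptic-regularity bound $\|u(t)\|_{H^2(\Omega)}\le C\|\partial_t u(t)\|_{L^2(\Omega)}$ is unnecessary and not quite usable as you state it (an $L^2$ bound on $\triangle u$ gives no trace on $\Gamma(t)$); the paper simply substitutes $\beta\triangle u=\partial_t u$ from the PDE and uses the trace of $\partial_t u\in H^1(\Omega^-\cup\Omega^+)$ on $\Gamma(t)$, together with Theorem \ref{thm_glob_trace} applied to $V^n_h$ --- where, as you rightly flag, one needs the trace bound on the curve $\Gamma(t)$, $t\in J_n$, which differs from the interface $\Gamma(t_n)$ defining $S^n_h$.
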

\begin{proof}
By the assumption $f=0$, noticing the regularity of $u$, i.e., $[u]_e=0$, $\forall e\in\mathring{\mathcal{E}}_h$, $[[u]]_{n-1}=0$ for $n=2,...,N$, and using \eqref{time_IFE_5}-\eqref{time_IFE_7} we have
\begin{equation}
\begin{split}
\label{thm_consist_eq1}
& A_h(E_h,V_h)  = A_h( u, V_h ) - A_h( U_h, V_h )  = A_h( u, V_h ) - ( U^0_h, V^1_h )_{L^2(\Omega)}  \\
 = & \sum_{n=1}^N \left( \int_{J_n} ( \partial_t u, V_h )_{L^2(\Omega)}  +  (\sqrt{\beta} \nabla u(t^-_{n}) ,\sqrt{\beta} \nabla V^n_h )_{L^2(\Omega)} -  \sum_{e\in\mathring{\mathcal{E}}_h} ( {\beta \nabla u(t^-_{n}) \cdot \bfn}, [V^n_h]_e )_{L^2(e)} \right) dt \\
 &  + (u(t_0)-U^0_h, V^1_h)_{L^2(\Omega)}
\end{split}
\end{equation}
where we denote the first summation above by $I$ and the second term $(u(t_0)-U^0_h,V^1_h)_{L^2(\Omega)}$ by $II$. Using the equation for $u$ and applying integration by parts we have
\begin{equation}
\label{thm_consist_eq2}
I = \sum_{n=1}^N \int_{J_n} ( \beta \triangle u(t), V_h )_{L^2(\Omega)} dt - \tau ( \beta \triangle u(t^-_{n}), V_h )_{L^2(\Omega)}.
\end{equation}
We introduce a function $\mathcal{G}(t)=(\beta \triangle u(t), V_h)_{L^2(\Omega)}$. By the mean value theorem there exists $z_{n}\in J_n=[t_{n-1},t_n]$ such that $I$ in \eqref{thm_consist_eq2} can be expressed into
\begin{equation}
\begin{split}
\label{thm_consist_eq3}
I = \sum_{n=1}^N \int_{J_n} \mathcal{G}(t) dt - \tau \mathcal{G}(t^-_{n}) = \sum_{n=1}^N \tau \mathcal{G}(z_{n}) - \tau \mathcal{G}(t^-_{n}) = \tau \sum_{n=1}^N \int_{t_{n}}^{z_n} \frac{d}{dt} \mathcal{G}(t) dt \le \tau \sum_{n=1}^N \int_{J_n} \verti{ \frac{d}{dt} \mathcal{G}(t) } dt.
 \end{split}
\end{equation}
Now we need to estimate $\verti{ \frac{d}{dt} \mathcal{G}(t) }$. For this purpose, we split the integral on $\Omega$ into the integrals on $\Omega^{\pm}(t)$ which are evolving with respect to time. The temporal derivative of the domain integral is based on the formula \eqref{functional_2}:
\begin{equation}
\begin{split}
\label{thm_consist_eq4}
\verti{ \frac{d}{dt} \mathcal{G}(t) } & =  \verti{  \sum_{s=\pm} \frac{d}{dt} \int_{\Omega^s(t)} \beta \triangle u V_h dX } \\
& =  \verti{  \sum_{s=\pm}  \int_{\Omega^s(t)} \partial_t \left( \beta \triangle u V_h \right) dX +  \int_{\Gamma(t)}  \beta \triangle u V_h \mathcal{V}\cdot\bfn ds }  \\
& =  \verti{  \sum_{s=\pm}  \int_{\Omega^s(t)} \partial_{tt} u V_h dX +  \int_{\Gamma(t)}   \partial_t u V_h \mathcal{V}\cdot\bfn ds } \\
& \le   \int_{\Omega} \verti{  \partial_{tt} u V_h } dX + K \int_{\Gamma(t)} \verti{   \partial_t u V_h } ds.
\end{split}
\end{equation}
Putting \eqref{thm_consist_eq4} into \eqref{thm_consist_eq3} and applying the Young's inequality, we first have
\begin{equation}
\begin{split}
\label{thm_consist_eq5}
\tau \sum_{n=1}^N \int_{J_n}\int_{\Omega} \verti{  \partial_{tt} u V_h } dX dt 
& \le \tau  \sum_{n=1}^N \int_{J_n}  \vertii{  \partial_{tt} u }_{L^2(\Omega)}  \|V_h\|_{L^2(\Omega)}   dt  \\
& = \tau  \sum_{n=1}^N  \|V^n_h\|_{L^2(\Omega)} \int_{J_n}  \vertii{  \partial_{tt} u }_{L^2(\Omega)}    dt  \\
& \le  T^{1/2}\max_{n=1,...,N} \|V^n_h\|_{L^2(\Omega)} \tau  \vertii{  \partial_{tt} u }_{L^2(0,T;L^2(\Omega))}   \\
& \le  T \tau^2 \epsilon^{-1}  \vertii{  \partial_{tt} u }^2_{L^2(0,T;L^2(\Omega))} + \frac{\epsilon}{4}  \max_{n=1,...,N} \|V^n_h\|^2_{L^2(\Omega)}
\end{split}
\end{equation}
where $T$ is the total time. Using the trace inequality from $\Gamma(t)$ to $\Omega$ given by Theorem \ref{thm_glob_trace} and the standard trace inequality, we can use a similar argument above to get the bound
\begin{equation}
\begin{split}
\label{thm_consist_eq6}
K\tau  \sum_{n=1}^N \int_{J_n} \int_{\Gamma(t)} \verti{   \partial_t u V_h } ds dt 
& \le K  \sum_{n=1}^N \int_{J_n} \tau \vertii{ \partial_t u }_{L^2(\Gamma(t))} \vertii{ V_h }_{L^2(\Gamma(t))}  dt \\
& \le K  \sum_{n=1}^N \int_{J_n} C \tau \vertii{ \partial_t u }_{H^1(\Omega)} \vertii{ V_h }_{H^1(\Omega)}  dt \\
& \le K  \sum_{n=1}^N \int_{J_n} C \tau^2 \epsilon^{-1} \vertii{ \partial_t u }^2_{H^1(\Omega)} + \frac{ \epsilon }{4} \vertii{ V_h }^2_{H^1(\Omega)}  dt \\
& = CK \tau^2 \epsilon^{-1} \| \partial_t u \|^2_{L^2(0,T;H^1(\Omega))} +  \frac{ K \epsilon \tau}{4} \sum_{n=1}^N \| V^n_h \|^2_{H^1(\Omega)}.
\end{split}
\end{equation}
\eqref{thm_consist_eq5} and \eqref{thm_consist_eq6} give the bound of $I$. In addition, the term $II$ can be directly bounded by the Young's inequality and Theorem \ref{thm_ellip_proj_err}:
\begin{equation}
\begin{split}
\label{thm_consist_eq7}
II & = (u_0 - U^0_h, V^1_h)_{L^2(\Omega)} = ( u_0 - \mathcal{R}^0_h u_0, V^1_h)_{L^2(\Omega)} \le \| u_0 - \mathcal{R}^0_h u_0 \|_{L^2(\Omega)} \| V^1_h \|_{L^2(\Omega)} \\
& \le  \epsilon^{-1} \| u_0 - \mathcal{R}^0_h u_0 \|^2_{L^2(\Omega)} + \frac{\epsilon}{4} \| V^1_h \|^2_{L^2(\Omega)}
\le C \epsilon^{-1} h^4 \| u_0  \|^2_{H^2(\Omega)} + \frac{\epsilon}{4} \| V^1_h \|^2_{L^2(\Omega)}.
\end{split}
\end{equation}
Combing it with the estimate of the term $II$ we have the desired result.
\end{proof}

An alternative expression of the bilinear form $A_h(\cdot,\cdot)$ to \eqref{time_IFE_6} is also needed.

\begin{lemma}
\label{lemma_Ah}
For every $U_h \in \mathbb{W}_h$ and $V_h\in \mathbb{S}_h$, there holds
\begin{equation}
\begin{split}
\label{lemma_Ah_eq0}
A_h(U_h,V_h) &=  \tau \sum_{n=1}^N a_h(U_h(t^-_n),V_h(t^-_n))  - \sum_{n=1}^{N-1} ( U_h(t^-_n), [[V_h]]_n )_{L^2(\Omega)} + (U_h(t^-_N), V_h(t^-_N))_{L^2(\Omega)}.
\end{split}
\end{equation}
\end{lemma}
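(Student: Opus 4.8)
The plan is to prove the identity \eqref{lemma_Ah_eq0} by a discrete integration by parts (Abel summation) in time, the only structural input being that every $V_h\in\mathbb{S}_h$ is constant on each subinterval $J_n$, with value $V_h|_{J_n}\equiv V_h(t^-_n)=:V^n_h$. Starting from the definition \eqref{time_IFE_6}, the first step is to rewrite the time-integral term: since $V_h$ is constant on $J_n$, the fundamental theorem of calculus gives $\int_{J_n}(\partial_t U_h,V_h)_{L^2(\Omega)}\,dt=(U_h(t^-_n)-U_h(t^+_{n-1}),V^n_h)_{L^2(\Omega)}$ for each $n$. Note that $U_h\in\mathbb{W}_h$ need not be continuous at the interior nodes, so the value $U_h(t^+_{n-1})$ genuinely appears and must be kept; it is exactly these terms that will interact with the jump terms of \eqref{time_IFE_6}.

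Next I would lump together all the $L^2(\Omega)$-pairings in \eqref{time_IFE_6} (everything except the $a_h$-sum), namely
\[
S:=\sum_{n=1}^N\big(U_h(t^-_n)-U_h(t^+_{n-1}),V^n_h\big)_{L^2(\Omega)}+\sum_{n=2}^N\big([[U_h]]_{n-1},V^n_h\big)_{L^2(\Omega)}+\big(U_h(t^+_0),V^1_h\big)_{L^2(\Omega)},
\]
and expand $[[U_h]]_{n-1}=U_h(t^+_{n-1})-U_h(t^-_{n-1})$. The contributions of $U_h(t^+_{n-1})$ from the first sum (for $n\ge2$) cancel those produced by the jump sum, the surviving $n=1$ boundary term $-(U_h(t^+_0),V^1_h)_{L^2(\Omega)}$ is cancelled by the initial-condition term $(U_h(t^+_0),V^1_h)_{L^2(\Omega)}$, and one is left with $S=\sum_{n=1}^N(U_h(t^-_n),V^n_h)_{L^2(\Omega)}-\sum_{n=2}^N(U_h(t^-_{n-1}),V^n_h)_{L^2(\Omega)}$, which involves only the left-sided traces $U_h(t^-_n)$.

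The last step is the summation by parts proper: reindexing the second sum by $k=n-1$ and recombining with the first gives $S=(U_h(t^-_N),V^N_h)_{L^2(\Omega)}+\sum_{n=1}^{N-1}(U_h(t^-_n),V^n_h-V^{n+1}_h)_{L^2(\Omega)}$. Since $V_h\in\mathbb{S}_h$ one has $[[V_h]]_n=V_h(t^+_n)-V_h(t^-_n)=V^{n+1}_h-V^n_h$, so $V^n_h-V^{n+1}_h=-[[V_h]]_n$, which turns $S$ into precisely the last two terms of \eqref{lemma_Ah_eq0} (recalling $V^N_h=V_h(t^-_N)$); adding back the untouched $a_h$-sum $\tau\sum_{n=1}^N a_h(U_h(t^-_n),V_h(t^-_n))$ finishes the argument. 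There is no analytic difficulty here — the whole computation is exact algebra — so the only thing requiring care is the bookkeeping of the endpoint terms at $n=0$ and $n=N$ and the index shift in the Abel summation; in particular one must resist collapsing $U_h(t^+_{n-1})$ and $U_h(t^-_{n-1})$, which are in general distinct because $U_h$ is only assumed to belong to $\mathbb{W}_h$.
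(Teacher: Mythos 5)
Your proof is correct and follows essentially the same route as the paper: both integrate the $(\partial_t U_h, V_h)$ term exactly over each $J_n$ (using that $V_h$ is piecewise constant in time, so the $\int_{J_n}(U_h,\partial_t V_h)$ contribution is absent), and then reorganize the endpoint and jump terms by a discrete summation by parts. The only cosmetic difference is that the paper packages the index shift via the jump-product identity $a^+b^+-a^-b^-=[[a]]b^++a^-[[b]]$ while you cancel the $U_h(t^+_{n-1})$ terms directly and then reindex, which yields the same bookkeeping.
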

\begin{proof}
Using the integration by parts for the temporal direction, we have
\begin{equation}
\begin{split}
\label{lemma_Ah_eq1}
& \int_{J_n} \int_{\Omega} \partial_t U_h V_h dX dt  = \int_{\Omega} \int^{t_n}_{t_{n-1}} \partial_t U_h V_h dt dX \\
 = & (U_h(t^-_n),V_h(t^-_n))_{L^2(\Omega)} - (U_h(t^+_{n-1}),V_h(t^+_{n-1}))_{L^2(\Omega)} - \int_{J_n} ( U_h, \partial_t V_h )_{L^2(\Omega)} dt
\end{split}
\end{equation}
where the last term vanishes since $V_h\in \mathbb{S}_h$. Then we note the following identity
\begin{equation}
\begin{split}
\label{lemma_Ah_eq2}
& \sum_{n=1}^N  (U_h(t^-_n),V_h(t^-_n))_{L^2(\Omega)} - (U_h(t^+_{n-1}),V_h(t^+_{n-1}))_{L^2(\Omega)} \\
= &  (U_h(t^-_N),V_h(t^-_N))_{L^2(\Omega)} -  (U_h(t^+_0),V_h(t^+_0))_{L^2(\Omega)} \\
- &  \sum_{n=1}^{N-1} \left(  (U_h(t^+_n), V_h(t^+_n))_{L^2(\Omega)} -  (U_h(t^-_n), V_h(t^-_n))_{L^2(\Omega)} \right) \\
= &  (U_h(t^-_N),V_h(t^-_N))_{L^2(\Omega)} -  (U_h(t^+_0),V_h(t^+_0))_{L^2(\Omega)} \\
- &  \sum_{n=1}^{N-1} \left(  ( [[U_h]]_n, V_h(t^+_n))_{L^2(\Omega)} +  (U_h(t^-_n), [[V_h]]_n)_{L^2(\Omega)} \right). 
\end{split}
\end{equation}
In \eqref{lemma_Ah_eq2} we further note $V_h(t^+_{n})=V_h(t^-_{n+1})$ since $V_h\in \mathbb{S}_h$. Putting \eqref{lemma_Ah_eq2} into \eqref{time_IFE_6} yields the desired result.
\end{proof}

Now based on the estimates prepared above, we can use the duality argument to analyze the solution errors. This idea was introduced in \cite{1991ErikssonJohnsonI,1995ErikssonClaes} for time-dependent adaptive mesh methods. 

\begin{theorem}
\label{thm_label_err_L2}
Under the conditions of Theorem \ref{thm_stab}, suppose the exact solution satisfies the regularity $u\in L^2(0,T;\widetilde{H}^2(\Omega)) \cap H^1(0,T;H^1(\Omega^-\cup \Omega^+))\cap H^2(0,T;L^2(\Omega)) \cap L^{\infty}(0,T; \widetilde{H}^2(\Omega))$, let $U_h$ be the IFE solution to \eqref{time_IFE_5} or \eqref{time_IFE_4} and let $\mathcal{V}\in L^{\infty}(\Gamma(t))$ with $\|\mathcal{V}(t)\cdot\bfn\|_{L^{\infty}(\Gamma(t))}\le K$, $\forall t$, then there holds
\begin{equation}
\begin{split}
\label{thm_label_err_L2_eq0}
\| u(t_N) - U^N_h \|_{L^2(\Omega)} + h| u(t_N) - U^N_h |_{H^1(\Omega)} & \le (1+ \sqrt{K}) C \tau \left( \| \partial_{tt}u \|_{L^2(0,T;L^2(\Omega))} +  \| \partial_t u \|_{L^2(0,T;H^1(\Omega))}  \right) \\
& + \sqrt{\log(1+N)} C h^2\| u \|_{L^{\infty}(0,T;H^2(\Omega))} .
\end{split}
\end{equation}
\end{theorem}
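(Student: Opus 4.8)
The plan is to run the Eriksson--Johnson duality technique: split the error through the elliptic projection at each time level, control the ``discrete'' part by a backward‑in‑time dual problem, and extract the logarithmic factor from the strong stability estimate of Theorem~\ref{thm_stab}. First I would define $\Pi_h u\in\mathbb{S}_h$ by $\Pi_h u|_{J_n}:=\mathcal{R}^n_h u(t_n)$ (well defined by the $L^\infty(0,T;\widetilde H^2)$ regularity) and write $E_h=\rho_h+\theta_h$ with $\rho_h:=u-\Pi_h u\in\mathbb{W}_h$ and $\theta_h:=\Pi_h u-U_h\in\mathbb{S}_h$. By Theorem~\ref{thm_ellip_proj_err}, $\|\rho_h(t_N^-)\|_{L^2(\Omega)}+h|\rho_h(t_N^-)|_{H^1(\Omega)}\le Ch^2\|u(t_N)\|_{H^2(\Omega)}\le Ch^2\|u\|_{L^\infty(0,T;H^2(\Omega))}$, so it remains to bound $\theta_h(t_N^-)$; its $L^2$‑norm is the main object, and the $h$‑weighted $H^1$‑part then follows from the elementwise inverse inequality \eqref{inver_inequa} and quasi‑uniformity, which give $h|\theta_h(t_N^-)|_{H^1(\Omega)}\le C\|\theta_h(t_N^-)\|_{L^2(\Omega)}$.

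With data $\psi:=\theta_h(t_N^-)\in S^N_h$, I would introduce the dual solution $\Phi_h\in\mathbb{S}_h$ defined by $A_h(V_h,\Phi_h)=(V_h(t_N^-),\psi)_{L^2(\Omega)}$ for all $V_h\in\mathbb{S}_h$. Applying Lemma~\ref{lemma_Ah} with first argument $V_h$ and second argument $\Phi_h$ and varying $V_h$ freely, this is equivalent to the backward recursion $(\mathcal{I}+\tau\mathcal{L}^n_h)\Phi^n_h=\mathcal{P}^n_h\Phi^{n+1}_h$, $n=N-1,\dots,1$, started from $(\mathcal{I}+\tau\mathcal{L}^N_h)\Phi^N_h=\mathcal{P}^N_h\psi$; each step is uniquely solvable since $\mathcal{I}+\tau\mathcal{L}^n_h$ is positive definite on $S^n_h$. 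After the relabeling $n\mapsto N-n$ this is exactly a scheme of the form \eqref{time_IFE_4} with initial datum $\mathcal{P}^N_h\psi$ (reversing the interface direction is harmless, since all estimates in Theorem~\ref{thm_stab} hold for an arbitrary sequence of IFE spaces). Hence Theorem~\ref{thm_stab} applies verbatim to $\Phi_h$, and from \eqref{thm_stab_eq01}--\eqref{thm_stab_eq02} (the latter being where the assumed $h^2\le\gamma\tau$ enters), together with Theorem~\ref{thm_dis_poinc} and the definition of $\vertiii{\cdot}_h$, I record the dual‑stability bounds $\max_n\|\Phi^n_h\|^2_{L^2(\Omega)}+\tau\sum_n\|\Phi^n_h\|^2_{H^1(\Omega)}\le C\|\psi\|^2_{L^2(\Omega)}$ and the strong bound $\tau^{-1}\sum_{n=1}^{N-1}(T-t_n)\|[[\Phi_h]]_n\|^2_{L^2(\Omega)}\le C\|\psi\|^2_{L^2(\Omega)}$.

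Taking $V_h=\theta_h$ gives $\|\psi\|^2_{L^2(\Omega)}=A_h(\theta_h,\Phi_h)=A_h(E_h,\Phi_h)-A_h(\rho_h,\Phi_h)$. For the first term I would invoke Theorem~\ref{thm_consist} with $V_h=\Phi_h$, insert the dual‑stability bounds, and choose the Young parameters term by term (of size $\sim(1+K)^{-1}$ against the $\Phi$‑terms, order one against the $h^4$‑term) so that $\tfrac14\|\psi\|^2$ is absorbed, leaving $A_h(E_h,\Phi_h)\le C(1+K)\tau^2\big(\|\partial_{tt}u\|^2_{L^2(0,T;L^2)}+K\|\partial_t u\|^2_{L^2(0,T;H^1)}\big)+Ch^4\|u_0\|^2_{H^2}+\tfrac14\|\psi\|^2$. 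For the second term I use the transposed form \eqref{lemma_Ah_eq0}: since $\rho_h(t_n^-)=u(t_n)-\mathcal{R}^n_h u(t_n)$ is $a_h$‑orthogonal to $S^n_h$ by \eqref{ellip_proj} and $\Phi_h(t_n^-)\in S^n_h$, the terms $\tau\sum_n a_h(\rho_h(t_n^-),\Phi^n_h)$ all vanish, so only
\begin{equation*}
A_h(\rho_h,\Phi_h)=-\sum_{n=1}^{N-1}(\rho_h(t_n^-),[[\Phi_h]]_n)_{L^2(\Omega)}+(\rho_h(t_N^-),\Phi_h(t_N^-))_{L^2(\Omega)}
\end{equation*}
remains. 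Bounding $\|\rho_h(t_n^-)\|_{L^2(\Omega)}\le Ch^2\|u\|_{L^\infty(0,T;H^2(\Omega))}$ by Theorem~\ref{thm_ellip_proj_err}, splitting $\sum_n\|[[\Phi_h]]_n\|_{L^2}\le(\sum_n(T-t_n)^{-1})^{1/2}(\sum_n(T-t_n)\|[[\Phi_h]]_n\|^2_{L^2})^{1/2}$, and using the harmonic estimate $\sum_{n=1}^{N-1}(T-t_n)^{-1}=\tau^{-1}\sum_{k=1}^{N-1}k^{-1}\le C\tau^{-1}\log(1+N)$ with the strong dual bound, one gets $A_h(\rho_h,\Phi_h)\le Ch^2\sqrt{\log(1+N)}\,\|u\|_{L^\infty(0,T;H^2(\Omega))}\,\|\psi\|_{L^2(\Omega)}$. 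Combining, absorbing $\tfrac14\|\psi\|^2$, and applying Young's inequality to the cross term yields $\|\theta_h(t_N^-)\|_{L^2(\Omega)}\le C(1+\sqrt K)\tau\big(\|\partial_{tt}u\|_{L^2(0,T;L^2)}+\|\partial_t u\|_{L^2(0,T;H^1)}\big)+C\sqrt{\log(1+N)}\,h^2\|u\|_{L^\infty(0,T;H^2(\Omega))}$; the triangle inequality with the splitting step then gives both the $L^2$ and, via the inverse inequality, the $h$‑weighted $H^1$ estimate of \eqref{thm_label_err_L2_eq0}.

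The crux is the dual problem: recognizing it as a time‑reversed instance of \eqref{time_IFE_4}, so that the full strength of Theorem~\ref{thm_stab}---in particular the $t_n$‑weighted strong stability \eqref{thm_stab_eq02}, which is precisely why the mild CFL‑type condition $h^2\le\gamma\tau$ is imposed---transfers to $\Phi_h$, and then the logarithmic factor emerges only from the harmonic sum $\sum_n(T-t_n)^{-1}$. A secondary point is that the $a_h$‑orthogonality cancellation in $A_h(\rho_h,\Phi_h)$ must be read off from the transposed form \eqref{lemma_Ah_eq0} rather than from \eqref{time_IFE_6}; and the dependence on the velocity bound $K$ has to be tracked carefully through the consistency estimate and the choice of the Young parameters.
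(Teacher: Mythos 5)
Your proposal is correct and follows essentially the same route as the paper: the same splitting through the elliptic projection (your $\rho_h/\theta_h$ is the paper's $\eta_h/\xi_h$), the same discrete dual problem in $\mathbb{S}_h$ solved backward in time and controlled by the stability estimates of Theorem \ref{thm_stab}, the consistency bound of Theorem \ref{thm_consist} for $A_h(E_h,\cdot)$, the $a_h$-orthogonality cancellation read off from Lemma \ref{lemma_Ah}, the harmonic sum producing $\sqrt{\log(1+N)}$, and the elementwise inverse inequality for the $h$-weighted $H^1$ part. Your use of the reversed-time weights $(T-t_n)$ in the strong dual stability is just a slightly more explicit statement of what the paper invokes as the ``counterpart'' of \eqref{thm_stab_eq02} for the dual sequence, and yields the same logarithmic factor.
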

\begin{proof}
First of all, we show the estimate in the $L^2$-norm. By the discrete duality argument to \eqref{time_IFE_4}, since $a_h(\cdot,\cdot)$ is coercive on every $S^n_h$, given each $Z^{N+1}_h\in W_h$ we can define a sequence $Z^n_h\in S^n_h$, $n=N,...,1$ such that
\begin{equation}
\label{thm_label_err_L2_eq1}
\tau a_h(V^n_h,Z^n_h) + ( V^n_h, Z^{n}_h )_{L^2(\Omega)} =  ( V^n_h, Z^{n+1}_h )_{L^2(\Omega)}, ~~~ \forall V^n_h\in S^n_h.
\end{equation}
Using the expression in Lemma \ref{lemma_Ah} we can write the equivalent format to \eqref{thm_label_err_L2_eq1} in terms of the bilinear form $A_h(\cdot,\cdot)$ by summing \eqref{thm_label_err_L2_eq1} from $n=N,...,1$, namely we need to find $Z_h \in \mathbb{S}_h$ such that
\begin{equation}
\label{thm_label_err_L2_eq2}
A_h(V_h, Z_h ) = (V^N_h, Z^{N+1}_h)_{L^2(\Omega)}, ~~~~ \forall V_h \in  \mathbb{S}_h.
\end{equation}
Let's employ the error decomposition similar to \eqref{err_decomp}, and define the corresponding functions $\xi_h\in \mathbb{S}_h$ with $\xi_h|_{J_n}=\xi^n_h$ and $\eta_h\in \mathbb{W}_h$ with $\eta_h|_{J_n}=u(t) - \mathcal{R}^n_hu$, $n=1,...,N$ which leads to $E_h=\xi_h + \eta_h$ with $E_h$ defined in \eqref{error}. In particular we also note that $\eta_h(t^-_{n})=u(t_{n}) - \mathcal{R}^n_hu = \eta^n_h$. Letting $Z^{N+1}_h = \xi^N_h$ and $V_h = \xi_h$ in \eqref{thm_label_err_L2_eq2} and using Lemma \ref{lemma_Ah} we have the function $Z_h$ satisfying
\begin{equation}
\begin{split}
\label{thm_label_err_L2_eq4}
 &\| \xi^N_h \|^2_{L^2(\Omega)}  = A_h( \xi_h, Z_h ) = A_h(E_h,Z_h) - A_h(\eta_h,Z_h) \\
 = & A_h(E_h,Z_h) - \tau \sum_{n=1}^N a_h( \eta^n_h,Z^n_h)   +  \sum_{n=1}^{N-1} ( \eta^n_h , [[Z_h]]_{n} )_{L^2(\Omega)} -  (\eta^N_h, Z^N_h)_{L^2(\Omega)},
\end{split}
\end{equation}
where the terms in the right hand side are denoted by $Q_i$, $i=1,2,3,4$, respectively. Now we proceed to estimate each term $Q_i$ individually. First of all, we have $\| V^n_h \|_{H^1(\Omega)}\le C \vertiii{V^n_h}_h$ by the discrete Poincar\'e inequality in Theorem \ref{thm_dis_poinc}. Then applying the counterpart of \eqref{thm_stab_eq01} in Theorem \ref{thm_stab} for the sequence $Z^n_h$ together with Theorem \ref{thm_consist}, we obtain
\begin{equation}
\begin{split}
\label{thm_label_err_L2_eq5}
Q_1 = A_h( E_h, V_h ) \le & C \tau^2\epsilon^{-1} \left( \| \partial_{tt}u \|^2_{L^2(0,T;L^2(\Omega))} + K \| \partial_t u \|^2_{L^2(0,T;H^1(\Omega))}  \right) \\
&+ \frac{\epsilon C(K+1)}{2} \| Z^{N+1}_h \|^2_{L^2(\Omega)} + C\epsilon^{-1}h^4 \| u_0 \|^2_{H^2(\Omega)},
\end{split}
\end{equation}
where $Z^{N+1}_h = \xi^N_h$. Next we note that $Z^n_h\in S^n_h$ and thus 
\begin{equation}
\label{thm_label_err_L2_eq6}
Q_2 = \tau\sum_{n=1}^N a_h(\eta^n_h,Z^n_h) = \tau\sum_{n=1}^N a_h( u(t_n) - \mathcal{R}^n_hu, Z^n_h ) = 0.
\end{equation}
By Schwarz inequality, using the last term in \eqref{thm_stab_eq02} of Theorem \ref{thm_stab} (also the counterpart for the sequence $Z^n_h$) and applying the estimate for $\eta^n_h$, we can bound $Q_3$ by
\begin{equation}
\begin{split}
\label{thm_label_err_L2_eq7}
Q_3 & \le \sum_{n=1}^{N-1} \| \eta^n_h \|_{L^2(\Omega)} \| [[Z_h]]_n \|_{L^2(\Omega)} \\
& \le \left( \sum_{n=1}^{N-1} \tau t^{-1}_n \| \eta^n_h \|^2_{L^2(\Omega)} \right)^{1/2} \left( \sum_{n=1}^{N-1} \tau^{-1} t_n \| [[Z_h]]_n \|^2_{L^2(\Omega)}\right)^{1/2} \\
& \le \max_{n=1,...,N}  \| \eta^n_h \|_{L^2(\Omega)}\left( \sum_{n=1}^{N-1} \tau \frac{1}{n \tau} \right)^{1/2} C \| Z^{N+1}_h \|_{L^2(\Omega)} \\
& \le C \sqrt{\log(1+N)} h^2 \| u \|_{L^{\infty}(0,T; H^2(\Omega))} \| \xi^N_h \|_{L^2(\Omega)} \\
& \le C \log(1+N) h^4 \epsilon^{-1} \| u \|^2_{L^{\infty}(0,T; H^2(\Omega))} + \frac{\epsilon}{4} \| \xi^N_h \|^2_{L^2(\Omega)}.
\end{split}
\end{equation}
The last term $Q_4$ can be bounded by the estimate for $\eta^N_h$ and the first term in \eqref{thm_stab_eq01} of Theorem \ref{thm_stab}:
\begin{equation}
\begin{split}
\label{thm_label_err_L2_eq8}
Q_4 & \le \| \eta^N_h \|_{L^2(\Omega)} \| Z^N_h \|_{L^2(\Omega)} \le Ch^2 \| u \|_{L^{\infty}(0,T;H^2(\Omega))} \| \xi^{N}_h \|_{L^2(\Omega)} \le Ch^4 \epsilon^{-1} \| u \|^2_{L^{\infty}(0,T;H^2(\Omega))} + \frac{\epsilon}{4} \| \xi^{N}_h \|^2_{L^2(\Omega)} . 
\end{split}
\end{equation}
Substituting \eqref{thm_label_err_L2_eq5}-\eqref{thm_label_err_L2_eq8} into \eqref{thm_label_err_L2_eq4}, we finally obtain
\begin{equation}
\begin{split}
\label{thm_label_err_L2_eq9}
\left( 1 - C(K+1)\epsilon \right) \| \xi^N_h \|^2_{L^2(\Omega)} & \le C \tau^2\epsilon^{-1} \left( \| \partial_{tt}u \|^2_{L^2(0,T;L^2(\Omega))} + K \| \partial_t u \|^2_{L^2(0,T;H^1(\Omega))}  \right) \\
&+ C \log(1+N) h^4 \epsilon^{-1} \| u \|^2_{L^{\infty}(0,T; H^2(\Omega))}.
\end{split}
\end{equation}
Choosing $\epsilon$ sufficiently small such that $1-C(K+1)\epsilon>0$, we have the error bound for $\xi^N_h$. Combining it with the estimate for $\eta^N_h$, we have the desired estimate for the $L^2$-norm. Finally the $H^1$-norm estimate simply follows from the inverse estimate $|\xi^N_h|_{H^1(T)}\le Ch_T^{-1} \|\xi^N_h\|_{L^2(T)}$ by \eqref{inver_inequa} for each element $T$ since $\xi^N_h\in S^N_h$.
\end{proof}

\begin{remark}
\label{rem_regularity}
We comment on the results between Theorem \ref{thm_label_err_L2} and the standard finite element method solving the stationary parabolic interface problem in \cite{1998ChenZou}. We first note that the regularity assumptions in Theorem \ref{thm_label_err_L2} on the temporal direction are stronger than Theorems 3.2 and 3.3 in \cite{1998ChenZou}. The regularity of parabolic interface problems with a stationary interface is discussed in \cite{2002HuangZou} which is also indeed weaker than those of Theorem \ref{thm_label_err_L2}. But we do not know of any literature where the regularity of parabolic equations with moving interface is studied. In addition it is interesting to note that there is also a ``log" term in Theorems 3.2 and 3.3 in \cite{1998ChenZou} but on the spatial mesh size $h$, i.e., $\log(h)$, while the ``log" term in Theorem \ref{thm_label_err_L2} appears to be on the temporal step size $\tau$, i.e., $|\log{(1+N)}|\approx |\log{(\tau)}|$. In general $\tau$ is taken in some order of $h$ to guarantee convergence, then these two results are actually comparable. However we should also read from these results that for the IFE method based on unfitted meshes the errors in the spatial direction and temporal direction are not completely decoupled due to the bound $\sqrt{\log(1+N)}h^2$, but as usual $\sqrt{\log(1+N)}$ has very limited affect on the total error.
\end{remark}

\begin{remark}
\label{rem_const}
% constant depends on t, interface intersects the boundary, distance to the boundary
Note that the generic constant in Theorem \ref{thm_label_err_L2} is time-dependent, i.e., $C=C(T)$. The source of the time dependence comes from Theorems \ref{thm_ellip_proj_err}, \ref{thm_Rn_err} and \ref{thm_dis_poinc} based on the duality argument that involves the constants of the elliptic regularity \cite{2010ChuGrahamHou,2002HuangZou,1987Leguillon}. The result in \cite{2010ChuGrahamHou} states that the regularity constant depends on the distance from the interface to the domain boundary. Moreover, the analysis in \cite{1987Leguillon} shows singularity may occur if the interface touches the boundary. However, to our best knowledge, there is no work in the literation giving detailed analysis on how these regularity constants depend on the interface geometry. Since the geometry can be really arbitrary during the motion of interface which is different from the stationary interface problems, a rigorous geometric analysis on the regularity constants can be important and interesting.
\end{remark}

%%%%%%%%%%%%%%%%%%%%%%%%%%%%%%%%%%%%%%%%%%%%%%%%%%%%%%%%%%%%
%%%%%%%%%%%%%%%%%%%%%%%%%%%%%%%%%%%%%%%%%%%%%%%%%%%%%%%%%%%%
%%%%%%%%%%%%%%%%%%%%%%%%%%%%%%%%%%%%%%%%%%%%%%%%%%%%%%%%%%%%
%%%%%%%%%%%%%%%%%%%%%%%%%%%%%%%%%%%%%%%%%%%%%%%%%%%%%%%%%%%%
%%%%%%%%%%%%%%%%%%%%%%%%%%%%%%%%%%%%%%%%%%%%%%%%%%%%%%%%%%%%

\section{Numerical Experiments} In this section, we present a group of numerical experiments to validate the theoretical analysis above. Note that some exploratory numerical experiments were given in \cite{2013HeLinLinZhang,2013LinLinZhang1}, but the IFE method they used does not include the penalties on interface edges which is then shown to only produce suboptimal convergent solutions for elliptic interface problems \cite{2015LinLinZhang} and thus can not be expected to be a good choice for moving interface problems. Here we consider a domain $\Omega=(-1,1)\times(-1,1)$ with three types of moving interface:
\begin{subequations}
\label{interf}
\begin{align}
     \text{(a translating line)}& ~ \Gamma_1(t): \varphi_1 =0 ~~ \text{with}~ \varphi_1 = x- (\pi/5+t ),   \\
     \text{(a moving circle)} &~ \Gamma_2(t):  \varphi_2 =0 ~~ \text{with}~ \varphi_2= (x-0.3\cos(\pi t))^2 +  (y-0.3\sin(\pi t))^2 - (\pi/6)^2, \\
     \text{(a rotating ellipse)} &~  \Gamma_3(t): \varphi_3 =0 ~~ \text{with}~\varphi_3 = 16(\cos(\pi t)x + \sin(\pi t)y)^2 + 49(-\sin(\pi t)x + \cos(\pi t)y)^2 - \pi^2,
\end{align}
\end{subequations}
which are illustrated in Figure \ref{fig:move_interf} where the red solid line is the interface curve and the blue dashed line denotes the trajectory of the center/focus. Here we mention that rotation motion widely appears in fluid-structure-interaction (FSI), for instance the vibration of turbine blades impacted by the fluid flow \cite{2020LanRamirezSun}. The considered situations can be all considered as large rotational/translational motions which in general can cause elements to become ill-shaped and thus reduce the accuracy of numerical solutions for some conventional moving mesh methods. For each of these interfaces and their motions, we define the subdomains $\Omega^+_i(t)=\{X\in \Omega:\varphi_i(X,t)>0\}$ and $\Omega^-_i(t)=\{X\in\Omega:\varphi_i(X,t)<0\}$, $i=1,2,3$, fix $\beta^-=1$ and $\beta^+=10$, and define the corresponding analytical solutions as
\begin{subequations}
\label{ana_solu}
\begin{align}
      & u_1(X,t) = \sin(x- (\pi/5+t ))/\beta^{\pm}~~ \text{in} ~ \Omega^{\pm}_1(t),   \\
      & 
     u_2(X,t) = 
     \begin{cases}
      & \frac{((x-0.3\cos(\pi t))^2 +  (y-0.3\sin(\pi t))^2)^{5/2}(\pi/6)^{-1}}{\beta^{-}} ~~ \text{in} ~ \Omega^{-}_2(t), \\ 
      &  \frac{((x-0.3\cos(\pi t))^2 +  (y-0.3\sin(\pi t))^2)^{5/2}(\pi/6)^{-1}}{\beta^{+}} +(\pi/6)^{4}(\frac{1}{\beta^-} - \frac{1}{\beta^+} ) ~~ \text{in} ~ \Omega^{+}_2(t),
\end{cases} \\
      & u_3(X,t) = 
     \begin{cases}
      & \frac{(\pi/4)^2(\pi/7)^2}{\beta^{-}}\left( \frac{(\cos(\pi t)x + \sin(\pi t)y)^2}{(\pi/4)^2} + \frac{(-\sin(\pi t)x + \cos(\pi t)y)^2}{(\pi/7)^2} \right)^{5/2}~~ \text{in} ~ \Omega^{-}_3(t), \\
      &(\frac{\pi}{4})^2(\frac{\pi}{7})^2 \left( \frac{1}{\beta^{+}}\left( \frac{(\cos(\pi t)x + \sin(\pi t)y)^2}{(\pi/4)^2} + \frac{(-\sin(\pi t)x + \cos(\pi t)y)^2}{(\pi/7)^2} \right)^{5/2} + (\frac{1}{\beta^{-}} - \frac{1}{\beta^{+}}) \right)  ~~ \text{in} ~ \Omega^{+}_3(t).
\end{cases}
\end{align}
\end{subequations}

\begin{figure}[h]
\centering
\begin{subfigure}{.3\textwidth}
     \includegraphics[width=2.1in]{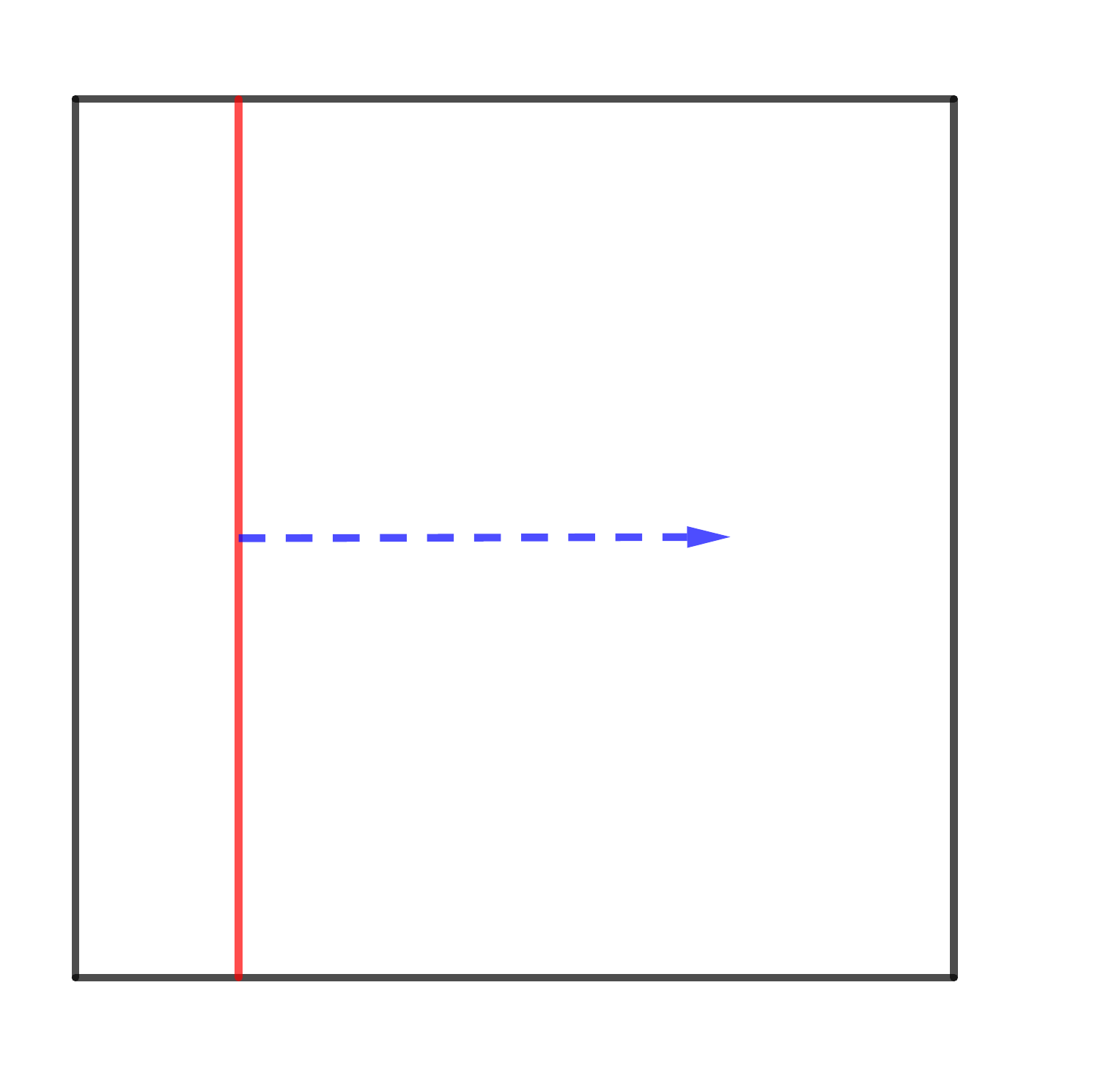}
     %\caption{A translating line}
     \label{trans_line} %% label for first subfigure
\end{subfigure}
~
\begin{subfigure}{.3\textwidth}
     \includegraphics[width=2.1in]{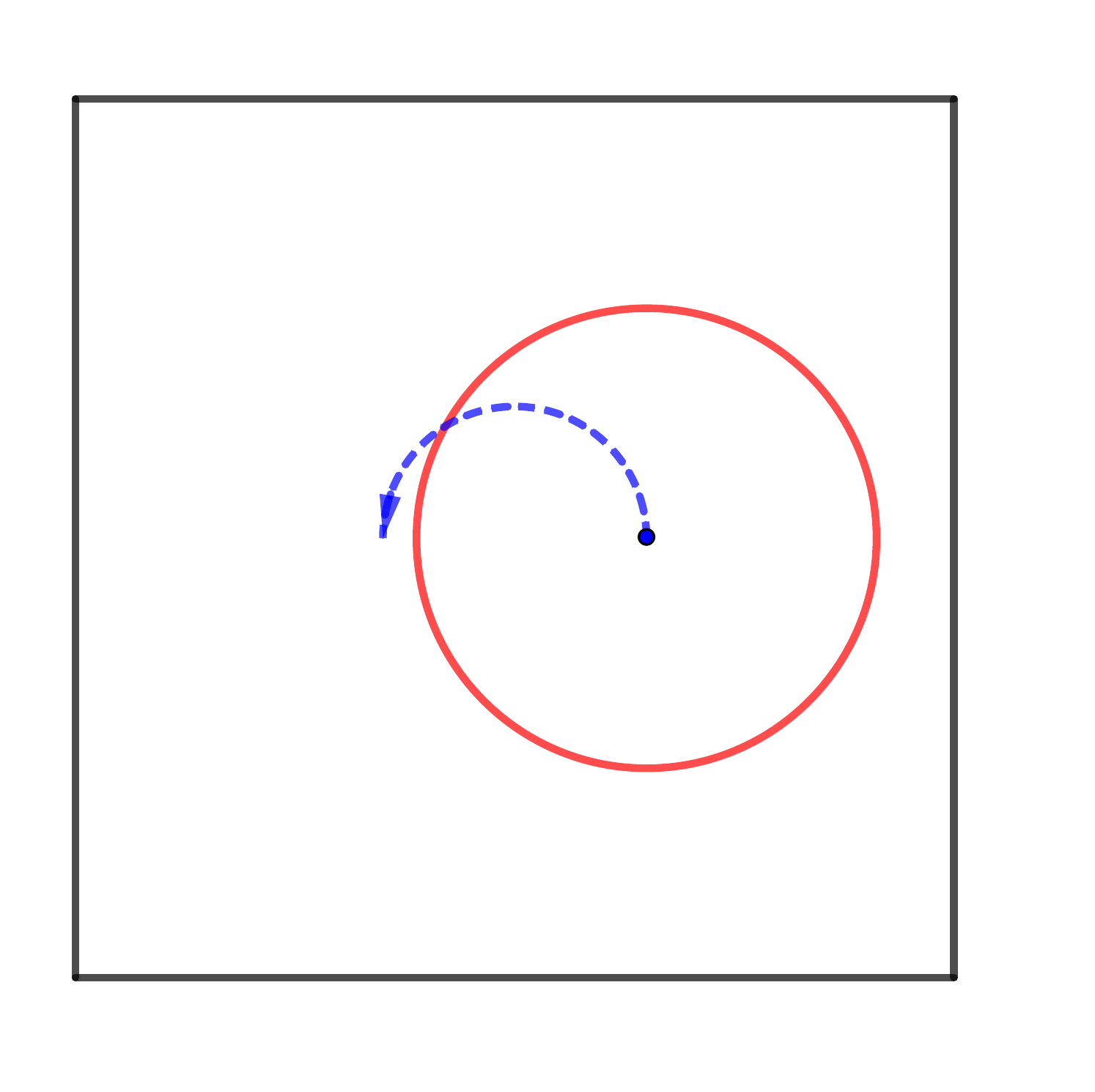}
    % \caption{A moving circle}
     \label{move_cir} %% label for first subfigure
\end{subfigure}
~
 \begin{subfigure}{.3\textwidth}
     \includegraphics[width=2.1in]{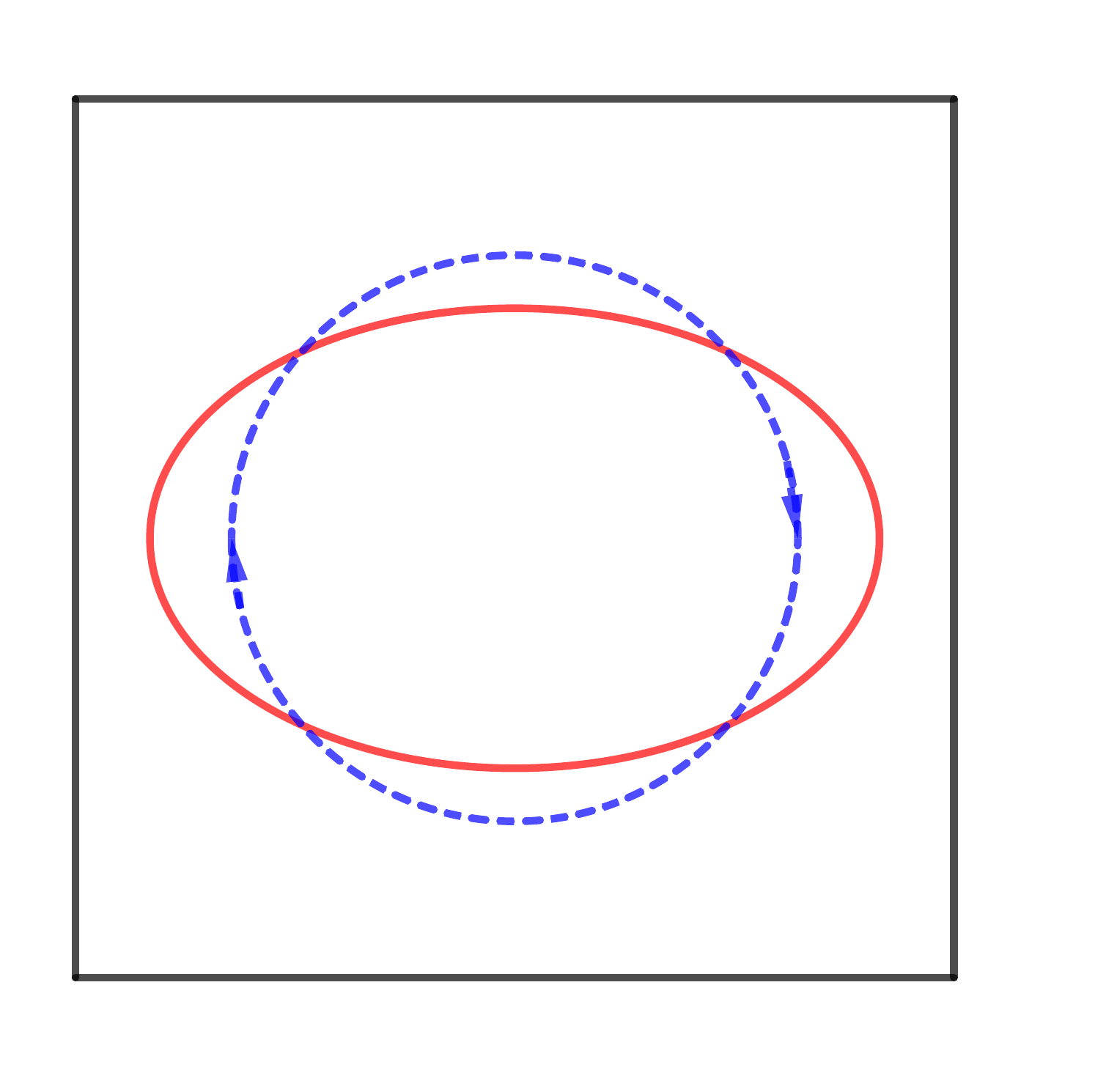}
    % \caption{A rotating ellipse}
     \label{rot_elli} %% label for first subfigure
\end{subfigure}
     \caption{The interface and movement: a translating line (left), a moving circle (middle) and a rotating ellipse (right).}
  \label{fig:move_interf} %% label for entire figure
\end{figure}

\begin{figure}[h]
\centering
\begin{subfigure}{.32\textwidth}
     \includegraphics[width=2.2in]{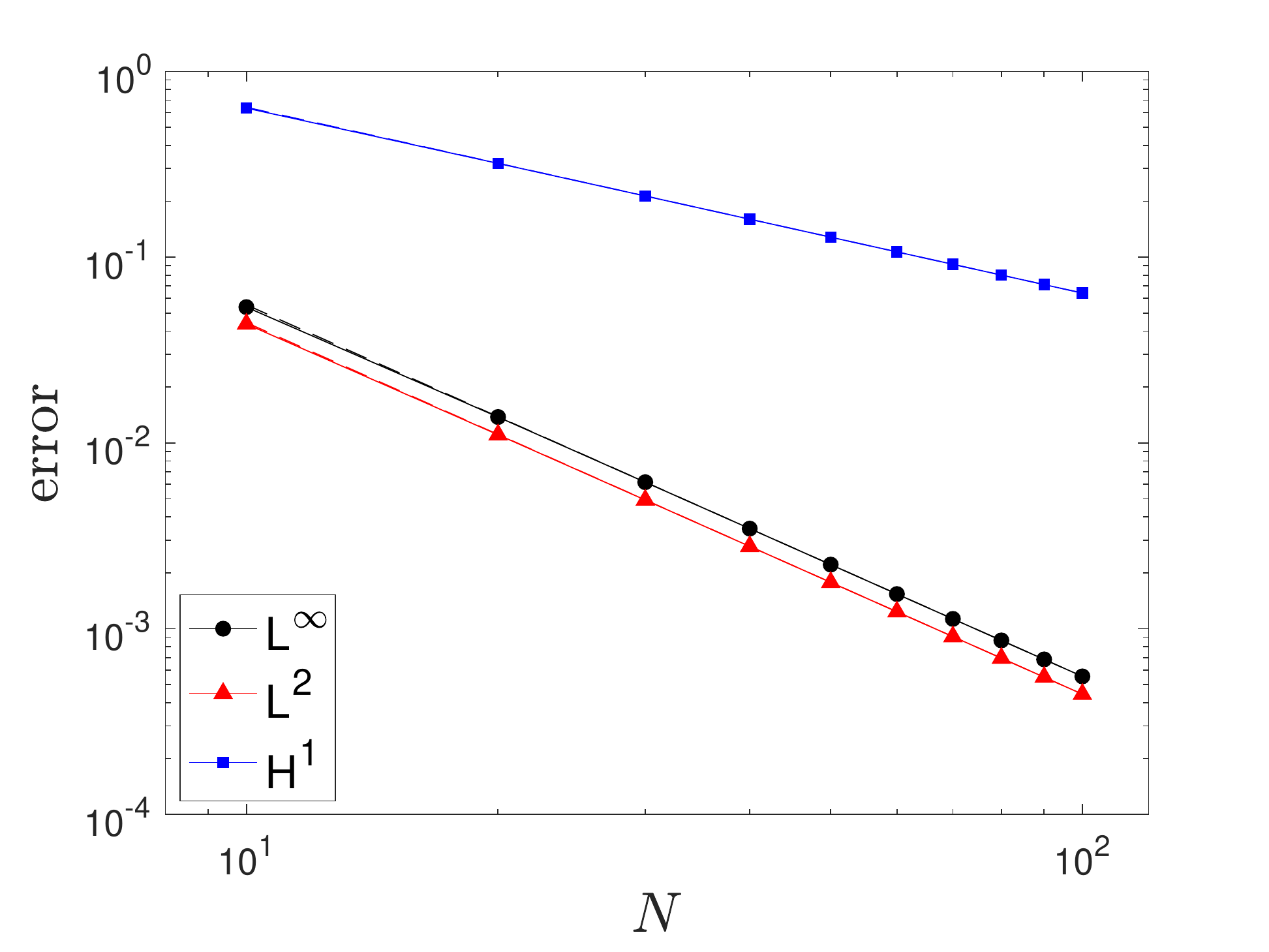}
     %\caption{A translating line}
     \label{trans_line_error} %% label for first subfigure
\end{subfigure}
~
\begin{subfigure}{.32\textwidth}
     \includegraphics[width=2.2in]{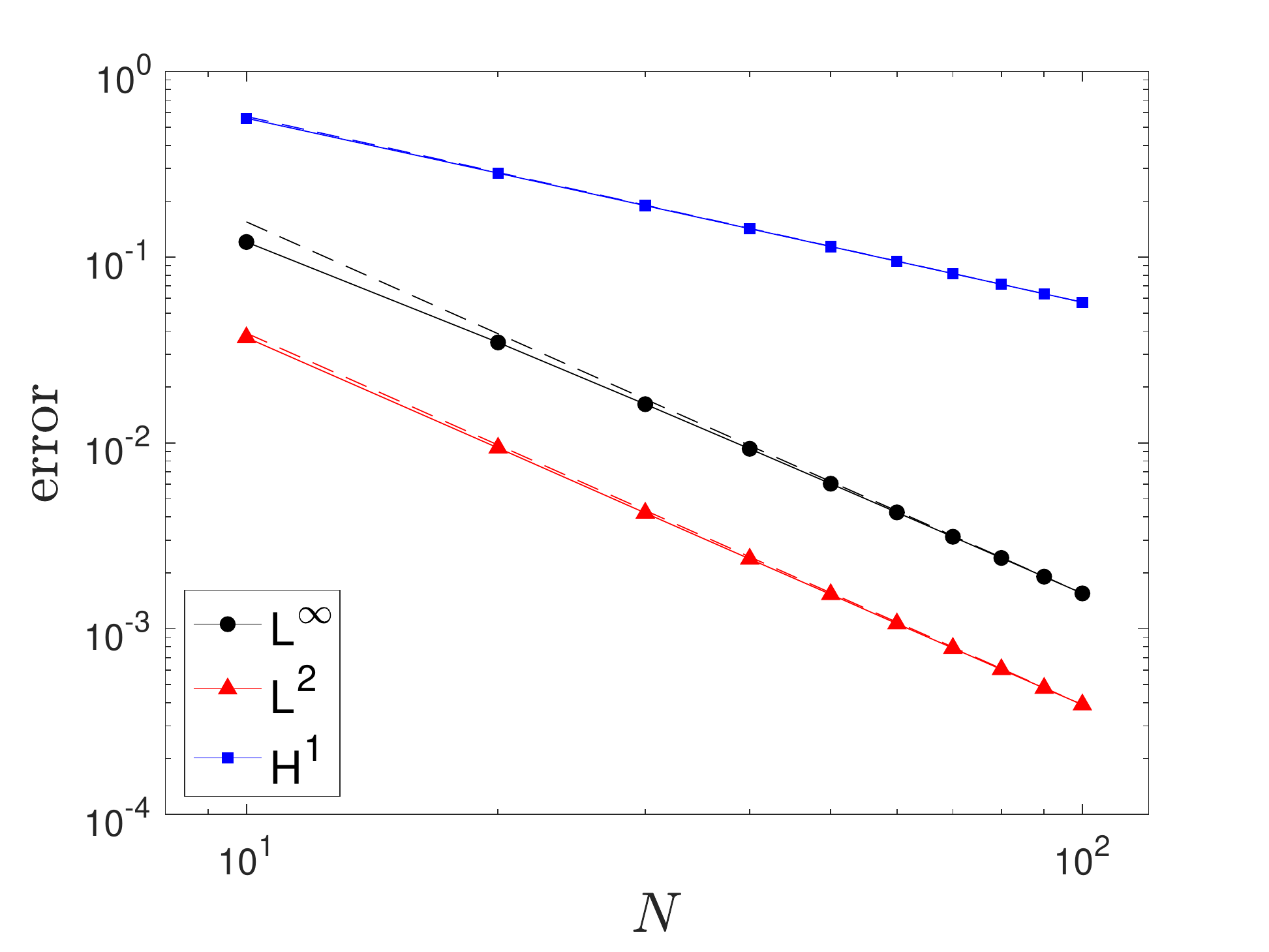}
    % \caption{A moving circle}
     \label{move_cir_error} %% label for first subfigure
\end{subfigure}
~
 \begin{subfigure}{.32\textwidth}
     \includegraphics[width=2.2in]{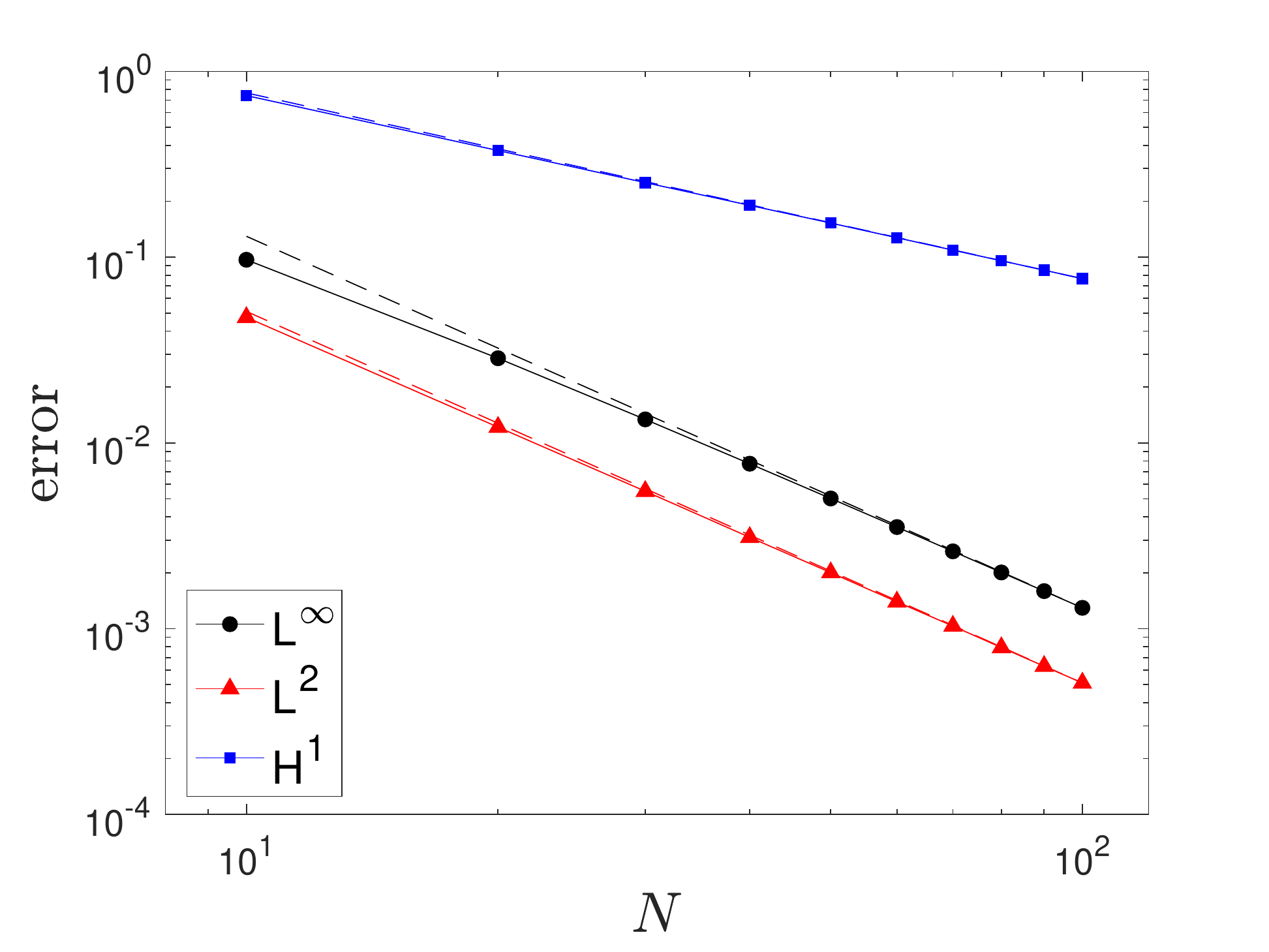}
     %\caption{A rotating ellipse}
     \label{rot_elli_error} %% label for first subfigure
\end{subfigure}
     \caption{Solution errors: a translating line (left), a moving circle (middle) and a rotating ellipse (right).}
  \label{fig:move_interf_err} %% label for entire figure
\end{figure}

\begin{figure}[h]
\centering
\begin{subfigure}{.32\textwidth}
     \includegraphics[width=2.2in]{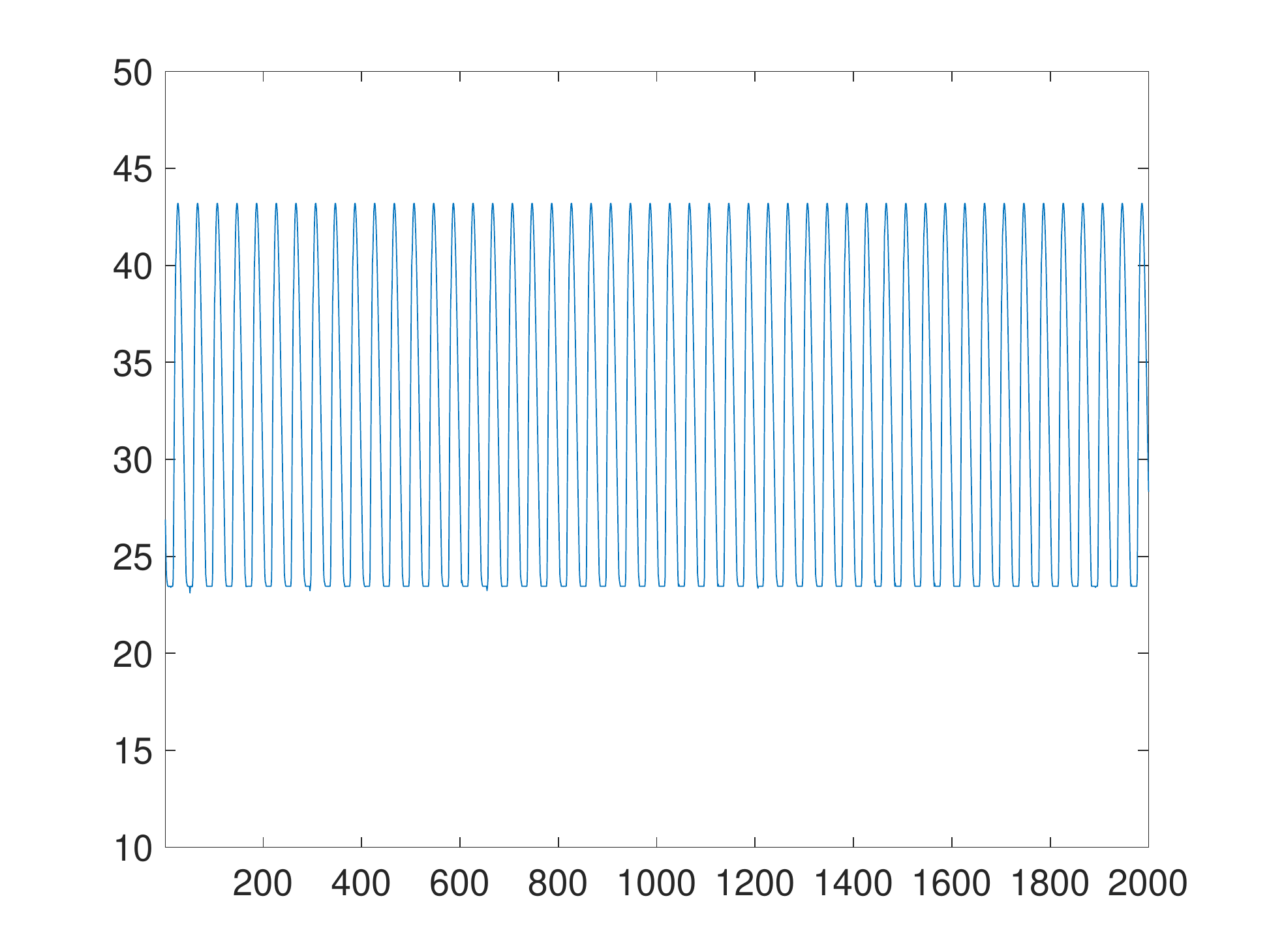}
     %\caption{A translating line}
     \label{trans_line_cond} %% label for first subfigure
\end{subfigure}
~
\begin{subfigure}{.32\textwidth}
     \includegraphics[width=2.2in]{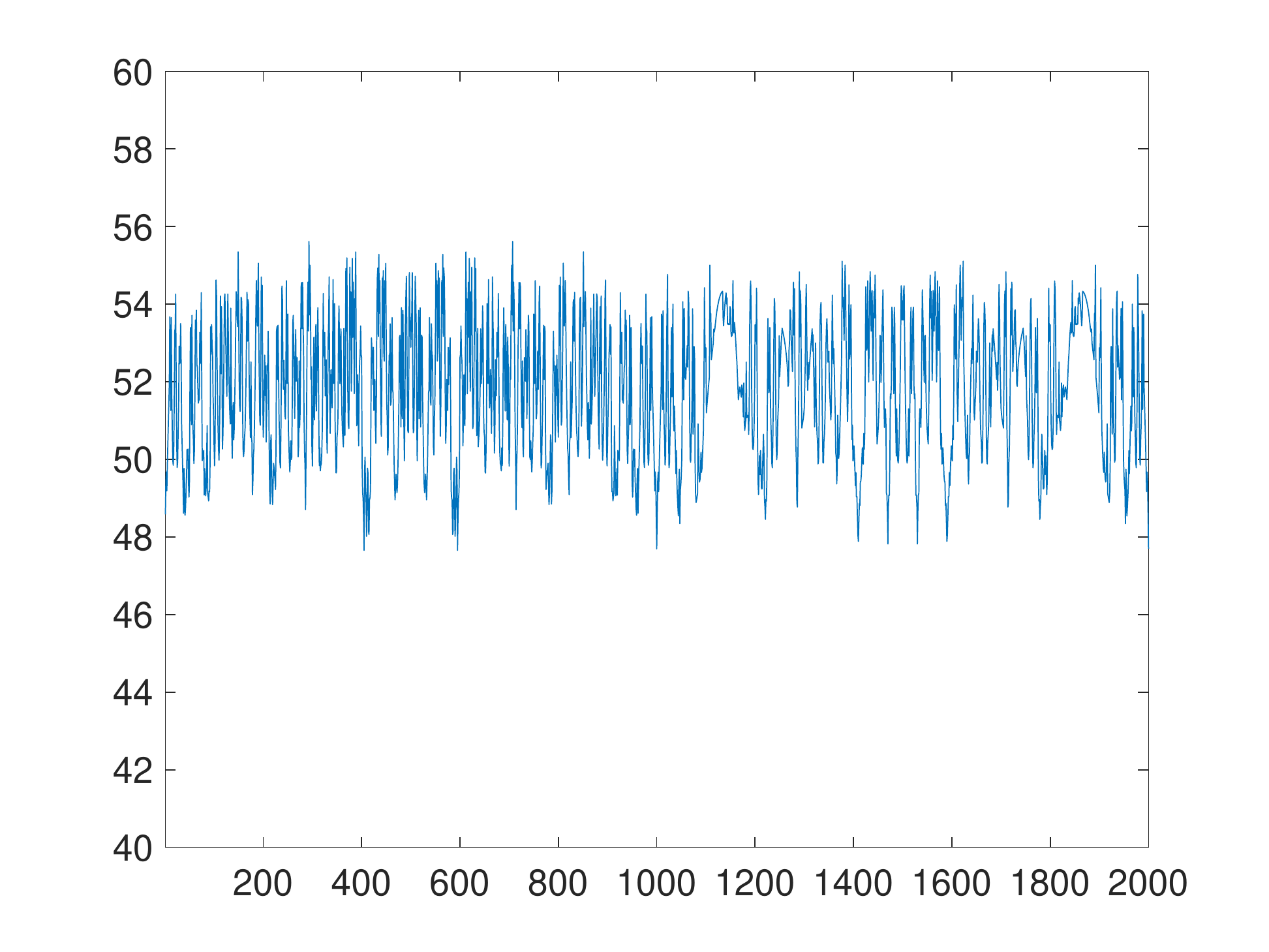}
    % \caption{A moving circle}
     \label{move_cir_con} %% label for first subfigure
\end{subfigure}
~
 \begin{subfigure}{.32\textwidth}
     \includegraphics[width=2.2in]{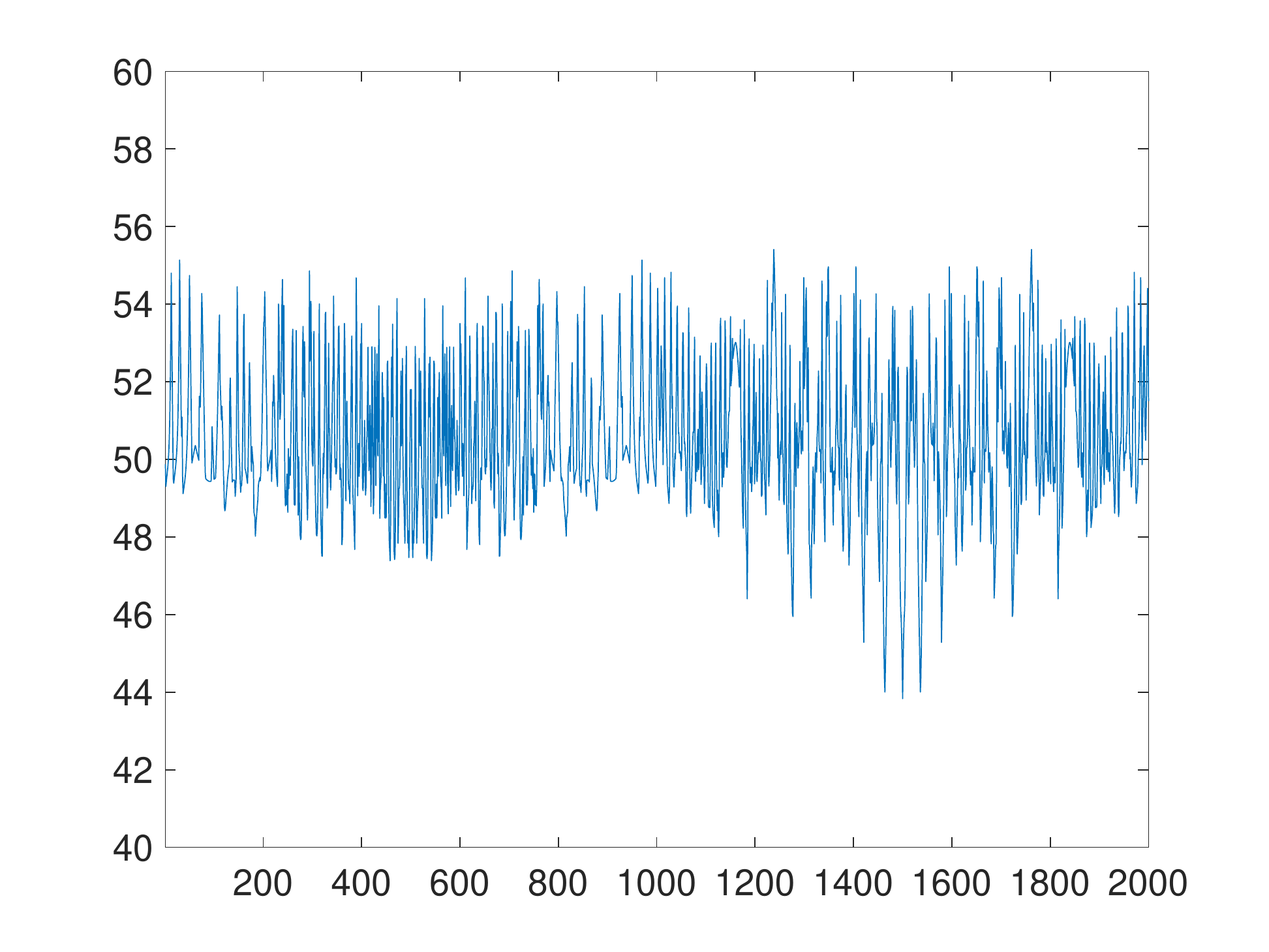}
     %\caption{A rotating ellipse}
     \label{rot_elli_cond} %% label for first subfigure
\end{subfigure}
     \caption{Condition numbers: a translating line (left), a moving circle (middle) and a rotating ellipse (right)}
  \label{fig:move_interf_cond} %% label for entire figure
\end{figure}

We generate the mesh by partitioning $\Omega$ into $N\times N$ squares and cutting each square into two triangles. The step size is chosen as $T/N^2$ where $T$ is the total time and $N=10,20,...,100$. In Figure \ref{fig:move_interf_err}, we plot the errors of the solutions at $T=1$ gauged by the $L^{\infty}$-, $L^2$- and $H^1$-norm. For each of the error curves, we also plot a reference line which matches the ending point of the error curve and has the expected ratio, i.e., $h^{-2}$, $h^{-2}$ and $h^{-1}$ for $L^{\infty}$-, $L^2$- and $H^1$ errors, respectively. From Figure \ref{fig:move_interf_err}, we can clearly see the optimal convergence, and especially even the error in $L^{\infty}$-norm converges optimally. In particular, for the linear interface, we can see the error curves almost overlap with the reference lines. These results certainly agree with the theoretical analysis. Furthermore, for unfitted mesh methods solving moving interface problems, since the interface can be really arbitrary relative to the mesh, it is critical that condition numbers of the methods are bounded regardless of the relative location. Here to investigate this issue we also plot the condition numbers of the matrices on $N=100$ associated with the weak form $\tau a_h(u_h,v_h) + (u_h,v_h)_{L^2(\Omega)}$, $\forall u_h,v_h\in S^n_h$, $n=1,2,....$, in Figure \ref{fig:move_interf_cond}. We can clearly see all the conditions numbers are uniformly bounded during the dynamics. In particular we note that during the motion of the linear interface, at certain points the linear interface may cut all the elements with small subelements, but we can see from Figure \ref{fig:move_interf_cond} that this small-subelement issue does not cause blow-up of the condition numbers. Note that it has been theoretically addressed in \cite{2020AdjeridBabukaGuoLin} that IFE methods do not suffer from the small-subelement issue.

\bibliographystyle{plain}
%\clearpage
%\bibliography{/Users/guoruchi/Documents/math/Mathematics/research/reference/RuchiBib.bib}

\end{document}